\newtheorem{theorem}{Theorem}[section]
\newtheorem{lemma}[theorem]{Lemma}
\newtheorem{corollary}[theorem]{Corollary}
\newtheorem{proposition}[theorem]{Proposition}
\theoremstyle{definition}
\newtheorem{definition}[theorem]{Definition}
\newtheorem{example}[theorem]{Example}
\newtheorem{remark}[theorem]{Remark}
\theoremstyle{remark}
\newtheorem{claim}{Claim}
\numberwithin{equation}{section}
\newcommand\N{\mathbb{N}}
\newcommand\Z{\mathbb{Z}}
\newcommand\T{\mathbb{T}}
\newcommand\cont{\mathfrak{c}}
\newcommand\du[1]{\widehat{#1}}
\newcommand\Prm{\mathbb{P}}
\title[]{Subgroups of direct products closely approximated by direct sums}
\author[]{M. Ferrer}
\address[M. Ferrer]{Universitat Jaume I, Instituto de Matem\'aticas de Castell\'on,
Campus de Riu Sec, 12071 Castell\'{o}n, Spain.}
\email{mferrer@mat.uji.es}
\author[]{S. Hern\'andez}
\address[S. Hern\'andez]{Universitat Jaume I, Departamento de Matem\'{a}ticas,
Campus de Riu Sec, 12071 Castell\'{o}n, Spain.}
\email{hernande@mat.uji.es}
\author[]{D. Shakhmatov}
\address[D. Shakhmatov]{Division of Mathematics, Physics and Earth Sciences\\
Graduate School of Science and Engineering\\
Ehime University\\
Matsuyama 790-8577\\
Japan}
\email{dmitri.shakhmatov@ehime-u.ac.jp}
\thanks{The first and second named authors were partially support by
Universitat Jaume I, grant P1·1B2012-05. The third named author was partially 
supported by the Grant-in-Aid for Scientific Research~(C) No.~22540089 by the 
Japan Society for the Promotion of Science (JSPS)}
\begin{document}
\begin{abstract}
Let $I$ be an infinite set, $\{G_i:i\in I\}$ be a family of (topological) groups and
$G=\prod_{i\in I} G_i$ be its direct product.
For $J\subseteq I$, $p_{J}: G\to \prod_{j\in J} G_j$ denotes the projection.
We say that a subgroup $H$ of
$G$
is: (i)~\emph{uniformly controllable\/}
in $G$
provided that
for every finite set $J\subseteq I$
there exists a finite set
$K\subseteq I$ such that
$p_{J}(H)=p_{J}(H\cap\bigoplus_{i\in K} G_i)$;
(ii)~\emph{controllable\/}
in $G$ provided that
$p_{J}(H)=p_{J}(H\cap\bigoplus_{i\in I} G_i)$
for every finite set $J\subseteq I$;
(iii)~\emph{weakly controllable\/} in $G$
if $H\cap \bigoplus_{i\in I} G_i$ is dense in $H$, when $G$ is equipped with the Tychonoff product topology.
One easily proves that (i)$\to$(ii)$\to$(iii).
We thoroughly investigate the question as to when these two arrows can be
reversed. We prove that the first arrow can be reversed when $H$ is compact,
but the second arrow cannot be reversed even when $H$ is compact.
Both arrows can be reversed if all groups $G_i$ are finite.
When $G_i=A$ for all $i\in I$, where $A$ is an abelian group, we show that the first arrow can be reversed for {\em all\/} subgroups $H$ of $G$ if and only if
$A$ is finitely generated.
Connections with coding theory are highlighted.
\end{abstract}
\date{May 17, 2013}

\subjclass[2010]{Primary: 22C05; Secondary: 22D35, 54D30, 54D65, 54E35}

\keywords{controllable group, weakly controllable group, finitely generated group, compact group, coding theory}

\maketitle

\section{Three ways to embed a group into a direct product}

Let $\{G_i:i\in I\}$ be a family of groups. As usual, its {\em direct product\/}
$G=\prod_{i\in I} G_i$ is the set of all functions
$g: I\to \bigcup\{G_i:i\in I\}$ such that $g(i)\in G_i$ for every $i\in I$.
The group operation on $G$ is defined coordinate-wise: the product
$gh\in G$ of $g\in G$ and $h\in G$ is the function defined by
$gh(i)=g(i)h(i)$ for each $i\in I$.
Clearly,
the identity element $1$ of $G$ is the function that assigns the identity element of $G_i$ to every $i\in I$.
The subgroup
$$
\bigoplus_{i\in I} G_i=\{g\in G: g(i)=1
\mbox{ for all but finitely many }
i\in I\}
$$
of $G$
is called the {\em direct sum\/} of the family $\{G_i:i\in I\}$.
For $J\subseteq I$, the projection
$p_{IJ}: \prod_{i\in I} G_i\to \prod_{j\in J} G_j$
defined by $p_{IJ}(g)=g\restriction_J$ for $g\in G$, is the group
homomorphism.
When $G_i=M$ for every $i\in I$, then we write $M^I$ instead of $\prod_{i\in I} G_i$.

Our first definition introduces two group-theoretic notions that
characterize the way a group is embedded into a direct product of
groups.

\begin{definition}
\label{def:controllability}
Let $I$ be a set, $\{G_i:i\in I\}$ be a family of groups and
$G=\prod_{i\in I} G_i$ be its direct product.
We say that a subgroup $H$ of
$G$
is:
\begin{itemize}
\item[(i)] \emph{controllable\/}
in $G$ provided that
$p_{IJ}(H)=p_{IJ}(H\cap\bigoplus_{i\in I} G_i)$
for every finite set $J\subseteq I$;

\item[(ii)]
\emph{uniformly controllable\/}
in $G$
provided that
for every finite set $J\subseteq I$
there exists a finite set
$K\subseteq I$ such that
$p_{IJ}(H)=p_{IJ}(H\cap\bigoplus_{i\in K} G_i)$.
\end{itemize}
\end{definition}

Clearly,
a uniformly controllable subgroup is controllable.
The following proposition gives two important instances
of (uniformly) controllable subgroups of direct products.

\begin{proposition}
\label{examples:of:controllability}
Let $\{G_i:i\in I\}$ be a family of
groups and $G=\prod_{i\in I} G_i$.
\begin{itemize}
\item[(i)]
Every subgroup of $G$ containing $\bigoplus_{i\in I} G_i$ is
uniformly
controllable in $G$.
\item[(ii)]
Every subgroup of $\bigoplus_{i\in I} G_i$ is controllable in $G$.
\end{itemize}
\end{proposition}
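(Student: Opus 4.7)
The plan is to unwind the definitions in both cases; neither part should require any nontrivial tool, so the main ``obstacle'' is simply choosing the right finite set $K$ in part~(i).

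For part~(ii), I would observe that the hypothesis $H\subseteq \bigoplus_{i\in I}G_i$ gives $H\cap \bigoplus_{i\in I}G_i=H$, so the required identity $p_{IJ}(H)=p_{IJ}(H\cap \bigoplus_{i\in I}G_i)$ holds tautologically for every finite $J\subseteq I$, with no further work.

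For part~(i), given a finite set $J\subseteq I$, I would take $K=J$. Since $K$ is finite, the direct sum $\bigoplus_{i\in K}G_i$ coincides with the set of elements of $G$ supported on $K$, and is contained in $\bigoplus_{i\in I}G_i$, hence in $H$ by hypothesis. Therefore $H\cap\bigoplus_{i\in K}G_i=\bigoplus_{i\in K}G_i$, and its image under $p_{IJ}$ is all of $\prod_{j\in J}G_j$ because $J=K$. Since $p_{IJ}(H)\subseteq \prod_{j\in J}G_j$ trivially, both sides of the defining equation equal $\prod_{j\in J}G_j$, proving uniform controllability.

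The only point that warrants care is the (harmless) identification of $\bigoplus_{i\in K}G_i$, for finite $K$, with the subgroup of $G$ consisting of functions that vanish off $K$; once this is spelled out, the whole proposition is a one-line verification in each case.
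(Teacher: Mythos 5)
Your proposal is correct and follows essentially the same route as the paper: part (ii) is the same tautology $H\cap\bigoplus_{i\in I}G_i=H$, and in part (i) the paper likewise takes $K=J$ and sandwiches $p_{IJ}(H)$ between $p_{IJ}\left(\bigoplus_{i\in J}G_i\right)$ and $p_{IJ}\left(\prod_{i\in I}G_i\right)=\prod_{j\in J}G_j$ to conclude that both sides of the defining equation equal $\prod_{j\in J}G_j$. No gaps.
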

\begin{proof}
(i)
Let
$H$ be a subgroup of $G$ such that $\bigoplus_{i\in I} G_i\subseteq H$.
Let $J\subseteq I$ be a finite set.
Then
$\bigoplus_{i\in J} G_i\subseteq \bigoplus_{i\in I} G_i\subseteq H$,
and so
$$
\bigoplus_{i\in J} G_i=p_{IJ}\left(\bigoplus_{i\in I} G_i\right)\subseteq p_{IJ}(H)\subseteq p_{IJ}\left(\prod_{i\in I} G_i\right)=\bigoplus_{i\in J} G_i,
$$
which yields
$p_{IJ}(H)=\bigoplus_{i\in J} G_i=p_{IJ}(\bigoplus_{i\in J} G_i)=
p_{IJ}(H\cap\bigoplus_{i\in J} G_i)$.
Therefore, $K=J$ satisfies item (ii) of Definition \ref{def:controllability}.

(ii)
If $H$ is a subgroup of $\bigoplus_{i\in I} G_i$, then
$H=H\cap \bigoplus_{i\in I} G_i$, and so item (i) of Definition \ref{def:controllability} trivially holds.
\end{proof}

For future proofs, it would be helpful to restate Definition \ref{def:controllability}
without
using the language of projections:
\begin{proposition}
\label{reformulation:of:definition}
Let $\{G_i:i\in I\}$ be a family of groups and
 $H$ be a subgroup of its direct product $G=\prod_{i\in I} G_i$.
Then:
\begin{itemize}
\item[(i)]
$H$ is controllable in $G$ if and only if
for every
$h\in H$
and each finite set $J\subseteq I$ there exists
$g\in H\cap\bigoplus_{i\in I} G_i$
such that
$g\restriction_J=h\restriction_J$;
\item[(ii)]
$H$ is uniformly controllable in $G$ if and only if
for every finite set $J\subseteq I$ there exists
a finite set $K\subseteq I$
such that
for
every
$h\in H$
one can find
$g\in H\cap\bigoplus_{i\in K} G_i$
with
$g\restriction_J=h\restriction_J$.
\end{itemize}
\end{proposition}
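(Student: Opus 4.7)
The plan is to observe that both equivalences are just elementwise unpackings of the set equality appearing in Definition \ref{def:controllability}, using the fact that $p_{IJ}(g)=g\restriction_{J}$ for every $g\in G$. In both parts, one inclusion of the relevant set equality is automatic: since $H\cap\bigoplus_{i\in K}G_{i}\subseteq H$ (and similarly with $K=I$ in part (i)), applying the set-map $p_{IJ}$ gives $p_{IJ}(H\cap\bigoplus_{i\in K}G_{i})\subseteq p_{IJ}(H)$ for free. So the content of the equality is the reverse inclusion.

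For item~(i), I would fix a finite $J\subseteq I$ and rewrite $p_{IJ}(H)\subseteq p_{IJ}(H\cap\bigoplus_{i\in I}G_{i})$ as the assertion that every element of the form $h\restriction_{J}=p_{IJ}(h)$ with $h\in H$ equals $g\restriction_{J}=p_{IJ}(g)$ for some $g\in H\cap\bigoplus_{i\in I}G_{i}$. This is precisely the universal-existential statement in (i), and the equivalence is immediate in both directions. For item~(ii), the same rewriting is applied, but now the finite set $K\subseteq I$ is chosen first, and the witness $g$ is required to lie in $H\cap\bigoplus_{i\in K}G_{i}$; again the elementwise reading of $p_{IJ}(H)\subseteq p_{IJ}(H\cap\bigoplus_{i\in K}G_{i})$ is literally the stated existential clause.

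There is no real obstacle, since no group-theoretic or topological argument is used; the proof is a bookkeeping exercise that records how set equality of images under a projection translates to a pointwise statement about restrictions. The only point worth stating explicitly (which I would do once at the beginning of the proof) is the identification $p_{IJ}(g)=g\restriction_{J}$, after which both (i) and (ii) follow by reading the definitions in each direction.
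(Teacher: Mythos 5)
Your proposal is correct: the identification $p_{IJ}(g)=g\restriction_J$ together with the automatic inclusion $p_{IJ}(H\cap\bigoplus_{i\in K}G_i)\subseteq p_{IJ}(H)$ reduces both items to an elementwise reading of the remaining inclusion, which is exactly the stated universal-existential clause. The paper omits the proof entirely, treating the proposition as an immediate restatement of Definition \ref{def:controllability}, and your bookkeeping argument is precisely the intended justification.
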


When all groups $G_i$ in question have some topology, we
always
equip their direct product
$\prod_{i\in I} G_i$ with
the Tychonoff product topology,
and
we also consider the following topological property.

\begin{definition}
\label{def:weak:controllability}
Let $I$ be a set, $\{G_i:i\in I\}$ be a family of topological groups
and
$G=\prod_{i\in I} G_i$ be its
direct product.
We say that a subgroup $H$ of
$G$
is
\emph{weakly controllable\/} in $G$ if $H\cap \bigoplus_{i\in I} G_i$ is dense in $H$.
\end{definition}

The relevance of the topological notion from Definition \ref{def:weak:controllability} to the group-theoretic notions
from Definition \ref{def:controllability} can be seen from the following
proposition which justifies the use of the word ``weakly'' in
Definition \ref{def:weak:controllability}.

\begin{proposition}
\label{controllable:is:weakly:controllable}
A controllable subgroup
of an arbitrary direct product
$G=\prod_{i\in I} G_i$ of topological groups $G_i$ is weakly controllable.
\end{proposition}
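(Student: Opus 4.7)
My plan is to unwind the definition of the Tychonoff product topology and then apply the pointwise reformulation of controllability supplied by Proposition \ref{reformulation:of:definition}(i).

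Fix a controllable subgroup $H$ of $G=\prod_{i\in I}G_i$. To show $H\cap\bigoplus_{i\in I}G_i$ is dense in $H$, take an arbitrary $h\in H$ and an arbitrary basic open neighborhood $U$ of $h$ in $G$. Such a $U$ has the form $U=\{g\in G : g(j)\in U_j \text{ for all } j\in J\}$ for some finite set $J\subseteq I$ and some open sets $U_j\subseteq G_j$ with $h(j)\in U_j$ for each $j\in J$. Applying Proposition \ref{reformulation:of:definition}(i) to this $h$ and this finite set $J$, I obtain an element $g\in H\cap\bigoplus_{i\in I}G_i$ with $g\restriction_J = h\restriction_J$. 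In particular $g(j)=h(j)\in U_j$ for every $j\in J$, so $g\in U\cap H\cap\bigoplus_{i\in I}G_i$, which witnesses the density of $H\cap\bigoplus_{i\in I}G_i$ at the point $h$.

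Since $h\in H$ and $U$ were arbitrary, this completes the argument. There is no real obstacle here: once one translates the Tychonoff neighborhood base into the language of finite coordinate sets, Proposition \ref{reformulation:of:definition}(i) delivers exactly the element required. No hypothesis on the groups $G_i$ (beyond carrying some topology) is needed, which matches the statement of the proposition.
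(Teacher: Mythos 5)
Your proof is correct and follows essentially the same route as the paper's: both reduce a basic Tychonoff neighborhood of a point $h\in H$ to a finite coordinate set $J$ and then invoke Proposition \ref{reformulation:of:definition}(i) to produce the required element of $H\cap\bigoplus_{i\in I}G_i$ agreeing with $h$ on $J$. No issues.
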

\begin{proof}
Let $H$ be a controllable subgroup of $G$.
We
need
to show that $H\cap \bigoplus_{i\in I} G_i$ is dense in $H$.
Let $O$ be an open subset of $G$ with $O\cap H\not=\emptyset$.
It suffices to check that $O\cap H\cap \bigoplus_{i\in I} G_i\not=\emptyset$.
Fix
$h\in O\cap H$.
By
the definition of the product topology,
we can find a finite set $J\subseteq I$
and an open neighbourhood $U_i$ of
$h(i)$
in $G_i$ for every $i\in J$
such that
\begin{equation}
\label{eq:U:O}
h\in W=\{g\in G: g(i)\in U_i
\mbox{ for all }
i\in J\}\subseteq O.
\end{equation}
Since $H$ is controllable in $G$,
Proposition \ref{reformulation:of:definition}(i) allows us to find
$g\in H\cap\bigoplus_{i\in I} G_i$
with
$g\restriction_J=h\restriction_J$.
From
this
and \eqref{eq:U:O}
it follows that
$g\in W\subseteq O$.
Therefore,
$g\in O\cap H\cap \bigoplus_{i\in I} G_i\not=\emptyset$.
\end{proof}

It is clear from Definition \ref{def:controllability}
that a uniformly controllable subgroup is controllable. Combining this with
the last proposition, we obtain the following chain of implications:
\begin{equation}
\label{three:notions}
\mbox{uniformly controllable}
\to
\mbox{controllable}
\to
\mbox{weakly controllable}.
\end{equation}

These implications show that the three notions introduced above express
a degree of how closely a subgroup of a direct product is approximated by its
direct sum.

In this paper we study the question whether
the
two
implications
above
can be reversed
for various classes of groups.

The principal result in Section \ref{Positive:results:section} is
Theorem \ref{controllable:and:uniformly:controllable:coincide:for:compact:subgroups} which asserts that the two notions from Definition \ref{def:controllability}
coincide for compact subgroups of arbitrary direct products $\prod_{i\in I} G_i$
of topological groups $G_i$.
Since both notions from Definition \ref{def:controllability} are purely algebraic
in the sense that they do not depend on topologies of groups $G_i$,
it is somewhat surprising that a topological property such as compactness
imposed on a subgroup of $\prod_{i\in I} G_i$
has an influence on
the invertibility of the first arrow in \eqref{three:notions}.
In particular, this arrow is reversible for closed subgroups of arbitrary
products $\prod_{i\in I} G_i$ of compact groups $G_i$; see Corollary
\ref{controllable:and:uniformly:controllable:coincide:in:closed:subgroups:of:compact:products}.
When all groups $G_i$ are finite, then both arrows in \eqref{three:notions}
can be reversed for arbitrary (not necessarily closed) subgroups of the product $\prod_{i\in I} G_i$; see Corollary \ref{uniform:controllability}.
This result has profound applications in coding theory.

The invertibility of the first arrow in \eqref{three:notions}
is thouroughly investigated in Section
\ref{Controllable:non-uniformly:controllable}.
In Theorem
\ref{characterization:of:powers:in:which:all:controllable:subgroups:are:uniformaly:controllable}
we characterize abelian groups $M$ such that
every controllable subgroup of an infinite power $M^I$ is uniformly controllable;
this property holds if and only if $M$ is finitely generated.
It follows that, for a non-finitely generated abelian group $M$ and an infinite set $I$, the product $M^I$ always contains some subgroup that is controllable but not uniformly controllable. As an application, it follows that the first arrow
in \eqref{three:notions} is not reversible for arbitrary subgroups of
the countable power $M^\N$ of the compact metric group $M=\Z(2)^\N$ of order $2$ (Example \ref{power:of:Z(2)}),
thereby demonstrating that
compactness of the subgroup is essential in Theorem
\ref{controllable:and:uniformly:controllable:coincide:for:compact:subgroups}
and closedness of the subgroup is essential in Corollary
\ref{controllable:and:uniformly:controllable:coincide:in:closed:subgroups:of:compact:products}. We push this even further in
Corollary \ref{lots:of:controllable:not:uniformly:controllable:subgroups}
by constructing a large family (having the maximal size $2^\cont$) of subgroups of the countable
power $\T^\N$ of the circle group $\T$ each of which is controllable but not uniformly controllable. (Here $\cont$ denotes the cardinality of the continuum.)
Both Example \ref{power:of:Z(2)} and Corollary \ref{lots:of:controllable:not:uniformly:controllable:subgroups} demonstrate that finiteness of groups $G_i$ in
Corollary \ref{uniform:controllability} cannot be replaced by their compactness.

In Section \ref{weakly:controllable:non:controllable}
we thouroughly investigate
the invertibility of the second arrow in \eqref{three:notions}.
In Theorem \ref{many:weakly:controllable:not:controllable;subgroups}
we prove that the family $\mathscr{H}$ of all weakly controllable
non-controllable subgroups of $\T^\N$ has cardinality $2^\cont$, which is the maximal
size possible.
Furthermore, we exhibit a compact member of $\mathscr{H}$ in
Example \ref{compact:subgroup}
and
a countable torsion member of $\mathscr{H}$ in Example \ref{torsion:subgroup}.

In Section \ref{sec:4} we study the behaviour of the three notions under taking closures and dense subgroups.
The three notions of controllability introduced in this section are closely
related to coding theory,
and this connection is explained in detail in Section
\ref{coding:section}. Among other things, this close connection justifies our choice of terminology in Definitions \ref{def:controllability}(i) and \ref{def:weak:controllability}. To the best of our knowledge,
the notion of uniform controllability introduced in Definition \ref{def:controllability}(ii) in new and has no analogue in coding theory, although it is weaker then the classical notion of strong controllability; see  Definition \ref{classical:definition}(iv) and implications in \eqref{strongly:controllable:implications}.
Theorem \ref{unif:but:not:strongly} shows that, for every prime number $p$,
the family $\mathscr{H}_p$ of subgroups of the product $\mathbb{Z}(p)^\N$ which are uniformly controllable but not strongly controllable
has cardinality $2^\cont$, the maximal size possible.

In
Section \ref{profinite:section} we study
the structure of profinite metric abelian groups.
We prove that
a compact metric profinite abelian group $G$ whose {\em torsion part\/}
$$
t(G)=\{x\in G: nx=0
\ \mbox{ for some }
\
n\in\N\setminus\{0\}\}
$$
is dense in $G$ is
topologically isomorphic to a product of countably many finite cyclic groups;
see Theorem \ref{lemma:2}.
In particular, a topological abelian group $G$ is topologically isomorphic to a direct product of countably many finite cyclic groups if and only if
 $G$ is zero-dimensional, compact metric and $t(G)$ is dense in $G$;
see Corollary \ref{cor:8.3}.
As a corollary, we show that
a closed weakly controllable subgroup of a countable direct product of finite
abelian groups is itself topologically isomorphic to a product of finite (cyclic) abelian groups; see Corollary \ref{product:corollary}.
These results should be compared with similar results for the class of Valdivia
compact groups
obtained recently in \cite{Kubis:2008,
Chigogidze:2008,KubisUspenskij:2005}.

Experts in coding theory may want to start with Sections
\ref{coding:section} and \ref{conclusion:section} first, and then
proceed with the rest of the paper.

\section{Cases when  various forms of controllability coincide}

\label{Positive:results:section}

In this section we investigate special cases when arrows in \eqref{three:notions} can be reversed.

Recall that a group $G$ satisfies the {\em ascending chain condition\/}
provided that every ascending chain
$G_0\subseteq G_1\subseteq \dots\subseteq G_n\subseteq G_{n+1}\subseteq\dots$ of subgroups of $G$ stabilizes; that is, there exists $k\in\N$ such that $G_k=G_m$ for all $m\in\N$ with $m\ge k$.

The
first arrow in \eqref{three:notions}
can be reversed when all groups $G_i$ satisfy the  ascending chain condition.
\begin{proposition}
\label{all:groups:satisfy:the:ascending:chain:condition}
Let $I$ be a set and let $\{G_i:i\in I\}$ be a family of
groups such that all $G_i$ satisfy the ascending chain condition.
Then
a subgroup of the direct product
$G=\prod_{i\in I} G_i$ is controllable in $G$ if and only if it is uniformly controllable in $G$.
\end{proposition}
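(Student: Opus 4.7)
The plan is to reduce uniform controllability to the finite generation that ACC forces on subgroups. Fix a finite $J\subseteq I$; by Proposition \ref{reformulation:of:definition}(ii), we need a single finite $K\subseteq I$ such that every element of $p_{IJ}(H)$ is matched on $J$ by some $g\in H$ supported in $K$.

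First I would record the standard fact that the ACC on subgroups is equivalent to every subgroup being finitely generated: if a subgroup $L$ were not finitely generated one could inductively pick $\ell_{n+1}\in L\setminus\langle \ell_1,\dots,\ell_n\rangle$ and produce a strictly ascending chain in $G$. Second, I would note that ACC is preserved by finite direct products: given an ascending chain $H_n$ in $G_1\times G_2$, the projections to $G_1$ and the intersections $H_n\cap(\{1\}\times G_2)$ eventually stabilize, and a short computation shows $H_n$ itself must stabilize. Hence $\prod_{j\in J}G_j$ satisfies ACC, so its subgroup $p_{IJ}(H)$ is finitely generated, say by $x_1,\dots,x_n$.

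Now apply controllability (via Proposition \ref{reformulation:of:definition}(i)): for each $k\in\{1,\dots,n\}$ pick $h_k\in H$ with $p_{IJ}(h_k)=x_k$, and then pick $g_k\in H\cap\bigoplus_{i\in I}G_i$ with $g_k\restriction_J=h_k\restriction_J$, so that $p_{IJ}(g_k)=x_k$. Set
\[
K=\bigcup_{k=1}^n \{i\in I:g_k(i)\neq 1\},
\]
a finite subset of $I$ because each $g_k$ has finite support. By construction $g_k\in H\cap\bigoplus_{i\in K}G_i$ for every $k$, so $x_1,\dots,x_n\in p_{IJ}(H\cap\bigoplus_{i\in K}G_i)$, and since the $x_k$ generate $p_{IJ}(H)$ while the right-hand side is a subgroup of $p_{IJ}(H)$, the two are equal. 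This gives the finite $K$ required by Definition \ref{def:controllability}(ii).

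The main obstacle is cleanly justifying that $\prod_{j\in J}G_j$ inherits ACC from its factors and that ACC yields finite generation of \emph{every} subgroup; both are elementary but essential, because the whole argument rests on reducing the potentially transfinite directed union $\bigcup_K p_{IJ}(H\cap\bigoplus_{i\in K}G_i)=p_{IJ}(H)$ to a finite generating set so that finitely many supports suffice.
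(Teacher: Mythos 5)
Your proposal is correct and follows essentially the same route as the paper's proof: use the ACC on the finite product $\prod_{i\in J}G_i$ to get a finite generating set for $p_{IJ}(H)$, lift the generators through controllability to elements of $H\cap\bigoplus_{i\in I}G_i$, and let $K$ be the union of their (finite) supports. The only difference is that you spell out the two background facts (ACC passes to finite products and forces every subgroup to be finitely generated) that the paper takes for granted, and both of your sketches of these facts are sound.
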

\begin{proof}
Clearly, every uniformly controllable subgroup of $G$ is controllable in $G$.
To prove the reverse implication, assume that $H$ is a controllable subgroup of $G$.
Fix a finite subset $J$ of $I$.
Since each $G_i$ for $i\in J$ satisfies the ascending chain condition, so does
the finite product $\prod_{i\in J} G_i$.
Therefore,
the subgroup $p_{IJ}(H)$ of $\prod_{i\in J} G_i$ is finitely generated.
Let $X$ be a finite set of generators for $p_{IJ}(H)$.
Since $H$ is controllable in $G$,
we have
$p_{IJ}(H)=p_{IJ}(H\cap\bigoplus_{i\in I} G_i)$,
so
we can fix
a finite set $Y\subseteq H\cap\bigoplus_{i\in I} G_i$ with
$X=p_{IJ}(Y)$.
Since $Y\subseteq \bigoplus_{i\in I} G_i$, there exists a finite set
$K\subseteq I$ such that $Y\subseteq \bigoplus_{i\in K} G_i$.
Now
$$
p_{IJ}(H)=\langle X \rangle =
\langle p_{IJ}(Y)\rangle \subseteq p_{IJ}\left(H\cap \bigoplus_{i\in K} G_i\right)
\subseteq
p_{IJ}(H),
$$
which yields
$p_{IJ}(H)=p_{IJ}(H\cap \bigoplus_{i\in K} G_i)$.
This proves that $H$ is uniformly controllable in $G$.
\end{proof}

The
second arrow in \eqref{three:notions}
can be reversed when all groups $G_i$ are discrete.
\begin{proposition}
\label{controllability}
Let $I$ be a set and let $\{G_i:i\in I\}$ be a family of
discrete groups. Then
a subgroup $H$ of the direct product
$G=\prod_{i\in I} G_i$ is weakly controllable in $G$
if and only if it is controllable in $G$.
\end{proposition}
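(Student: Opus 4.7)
The forward direction (controllable implies weakly controllable) has already been established in Proposition \ref{controllable:is:weakly:controllable} without any hypothesis on the $G_i$, so the content is entirely in the converse. My plan is to use Proposition \ref{reformulation:of:definition}(i) to reduce controllability to the combinatorial statement: for every $h\in H$ and every finite $J\subseteq I$, some $g\in H\cap\bigoplus_{i\in I} G_i$ agrees with $h$ on $J$.

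Fix such $h$ and $J$. The discreteness of each $G_i$ is the crucial hypothesis: it allows me to regard the singleton $\{h(i)\}$ as an open neighborhood $U_i$ of $h(i)$ in $G_i$ for every $i\in J$. Then the basic open set
$$
W=\{g\in G : g(i)=h(i)\mbox{ for all }i\in J\}
$$
is an open neighborhood of $h$ in $G$ (here I reuse precisely the construction from the proof of Proposition \ref{controllable:is:weakly:controllable}). Thus $W\cap H$ is a nonempty open subset of $H$ in the subspace topology.

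By the assumption that $H$ is weakly controllable, the subgroup $H\cap\bigoplus_{i\in I}G_i$ is dense in $H$, so it meets every nonempty open subset of $H$; in particular $W\cap H\cap\bigoplus_{i\in I}G_i\neq\emptyset$. Any element $g$ of this intersection lies in $H\cap\bigoplus_{i\in I} G_i$ and satisfies $g(i)=h(i)$ for every $i\in J$, i.e.\ $g\restriction_J=h\restriction_J$. Applying Proposition \ref{reformulation:of:definition}(i), this shows $H$ is controllable in $G$.

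There is no real obstacle here: the entire point is that discreteness makes singletons open, which turns a topological density condition into the coordinate-wise extension condition defining controllability. Without discreteness one would only be able to match $h$ on $J$ to within an arbitrary open neighborhood rather than exactly, which is precisely why weak controllability is strictly weaker than controllability in the general topological setting treated in later sections.
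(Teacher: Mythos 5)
Your proof is correct and follows essentially the same route as the paper's: discreteness makes the cylinder set $W=\{g\in G: g(i)=h(i)\mbox{ for all }i\in J\}$ open, density of $H\cap\bigoplus_{i\in I}G_i$ in $H$ produces the required $g$, and Proposition \ref{reformulation:of:definition}(i) closes the argument. No gaps.
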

\begin{proof}
The ``if'' part is proved in Proposition \ref{controllable:is:weakly:controllable}. To prove the ``only if'' part,
suppose that $H$ is weakly controllable in $G$.
To show that $H$ is controllable in $G$,
fix
$h\in H$
and a finite set $J\subseteq I$.
Since all $G_i$ are discrete, it follows from the definition of the product topology that
$U=\{g\in G: g(i)=h(i)$ for all $i\in J\}$
is an open subset of $G$.
Note that $h\in U\cap H$,
so $U\cap H$ is a non-empty open subset of $H$.
By the weak controllability of $H$, we can find
$g\in U\cap H\cap \bigoplus_{i\in I} G_i$.
Clearly, $g\restriction_J=h\restriction_J$. Therefore,
$H$ is controllable in $G$
by Proposition
\ref{reformulation:of:definition}(i).
\end{proof}

When the groups in question are finite, Proposition
 \ref{controllability}
can be strengthened a bit further, allowing the reversal of both arrows in
\eqref{three:notions}.

\begin{corollary}
\label{uniform:controllability}
Let $I$ be a set and let $\{G_i:i\in I\}$ be a family of
finite (discrete) groups. Then
for every subgroup $H$ of the direct product
$G=\prod_{i\in I} G_i$ the following conditions are equivalent:
\begin{itemize}
\item[(i)] $H$ is weakly controllable;
\item [(ii)] $H$ is controllable;
\item [(iii)] $H$ is uniformly controllable.
\end{itemize}
\end{corollary}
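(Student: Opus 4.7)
The plan is to observe that this corollary is essentially a combination of the two preceding propositions, applied to the special setting of finite discrete groups. The implications (iii)$\to$(ii)$\to$(i) are already given by the general chain \eqref{three:notions}, so only the two reverse implications require argument.

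For (i)$\to$(ii), I would simply invoke Proposition \ref{controllability}: since every finite group is discrete, weak controllability implies controllability directly from that result. No extra work is needed.

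For (ii)$\to$(iii), the idea is to appeal to Proposition \ref{all:groups:satisfy:the:ascending:chain:condition}. Every finite group trivially satisfies the ascending chain condition, because any strictly increasing chain of subgroups must stabilize after finitely many steps for cardinality reasons. Therefore the hypothesis of Proposition \ref{all:groups:satisfy:the:ascending:chain:condition} is met, and every controllable subgroup of $G=\prod_{i\in I}G_i$ is automatically uniformly controllable.

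Combining these two observations closes the circle (i)$\to$(ii)$\to$(iii)$\to$(i), establishing the equivalence. There is no real obstacle here: both reverse implications are already encapsulated in the preceding propositions, and the role of finiteness is just to guarantee simultaneously that the groups are discrete (feeding Proposition \ref{controllability}) and that they satisfy the ascending chain condition (feeding Proposition \ref{all:groups:satisfy:the:ascending:chain:condition}).
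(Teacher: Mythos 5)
Your proposal is correct and matches the paper's own proof exactly: the paper likewise deduces (i)$\leftrightarrow$(ii) from Proposition \ref{controllability} (finite groups being discrete) and (ii)$\to$(iii) from Proposition \ref{all:groups:satisfy:the:ascending:chain:condition} (finite groups satisfying the ascending chain condition), with the remaining implications coming from the general chain \eqref{three:notions}. Nothing is missing.
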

\begin{proof}
The equivalence (i)$\leftrightarrow$(ii) is proved in Proposition \ref{controllability},
and
the equivalence (ii)$\to$(iii)
is proved in Proposition
\ref{all:groups:satisfy:the:ascending:chain:condition}.
\end{proof}

The rest of this section is devoted to showing that the first arrow in
\eqref{three:notions} can be reversed for
compact subgroups of direct products; see
Theorem
\ref{controllable:and:uniformly:controllable:coincide:for:compact:subgroups}.
For this end, we shall need a general proposition which has its own interest.

\begin{proposition}
\label{preservation:by:projections}
Let $\{G_i:i\in I\}$ be a family of (topological) groups and let $H$ be a subgroup
of its direct product $G=\prod_{i\in I} G_i$.
Let $S$ be a subset of $I$ and
$G_S=\prod_{i\in S} G_i=p_{IS} (G)$.
\begin{itemize}
\item[(i)]
If $H$ is (uniformly) controllable in $G$, then $p_{IS}(H)$ is
(uniformly) controllable in $G_S$.
\item[(ii)]
If $H$ is weakly controllable in $G$, then $p_{IS}(H)$ is
weakly controllable in $G_S$.
\end{itemize}
\end{proposition}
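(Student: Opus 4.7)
My plan is to prove each part by reducing to the element-wise reformulation of (uniform) controllability in Proposition~\ref{reformulation:of:definition}, exploiting the fact that $p_{IS}$ sends $\bigoplus_{i\in I}G_i$ onto $\bigoplus_{i\in S}G_i$ (because restricting a finitely supported function to $S$ again has finite support).

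For the controllable case of (i), I would fix $h'\in p_{IS}(H)$ and a finite set $J\subseteq S$. Since $h'\in p_{IS}(H)$, lift it to $h\in H$ with $h\restriction_S=h'$. Regarding $J$ as a finite subset of $I$, apply controllability of $H$ in $G$ via Proposition~\ref{reformulation:of:definition}(i) to produce $g\in H\cap \bigoplus_{i\in I}G_i$ with $g\restriction_J=h\restriction_J$. Then $g':=p_{IS}(g)$ lies in $p_{IS}(H)$, it lies in $\bigoplus_{i\in S}G_i$ because $g$ has finite support in $I$, and $g'\restriction_J=g\restriction_J=h\restriction_J=h'\restriction_J$ (using $J\subseteq S$). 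This verifies controllability of $p_{IS}(H)$ in $G_S$ by Proposition~\ref{reformulation:of:definition}(i). For the uniform version, I would carry out the same argument but now, given a finite $J\subseteq S$, first apply the uniform controllability of $H$ in $G$ to produce a finite $K\subseteq I$ witnessing it; then set $K':=K\cap S$. For each $h'\in p_{IS}(H)$ lift to $h\in H$, find $g\in H\cap\bigoplus_{i\in K}G_i$ with $g\restriction_J=h\restriction_J$, and observe that $p_{IS}(g)\in \bigoplus_{i\in K'}G_i$. Thus $K'$ witnesses the uniform controllability of $p_{IS}(H)$ via Proposition~\ref{reformulation:of:definition}(ii).

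For (ii), the key topological fact is that the projection $p_{IS}:G\to G_S$ is continuous and surjective, and therefore sends dense subsets of any subspace onto dense subsets of its image: if $U$ is open in $G_S$ and meets $p_{IS}(H)$, then $p_{IS}^{-1}(U)$ is open in $G$ and meets $H$, so it meets any dense subset of $H$, and applying $p_{IS}$ yields a point of $U$ in the image. Applying this with the dense subset $H\cap\bigoplus_{i\in I}G_i$ of $H$ shows that $p_{IS}\bigl(H\cap\bigoplus_{i\in I}G_i\bigr)$ is dense in $p_{IS}(H)$. Since $p_{IS}\bigl(\bigoplus_{i\in I}G_i\bigr)=\bigoplus_{i\in S}G_i$, this image is contained in $p_{IS}(H)\cap\bigoplus_{i\in S}G_i$, and so the latter is also dense in $p_{IS}(H)$.

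There is no real obstacle here: the argument is essentially bookkeeping built on the observation that projection preserves finite support. The only point that needs a little care is the uniform case, where one must notice that it is safe to shrink the witnessing set $K\subseteq I$ to $K\cap S\subseteq S$ after projecting, since coordinates in $K\setminus S$ simply disappear under $p_{IS}$.
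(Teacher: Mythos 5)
Your proof is correct and takes essentially the same route as the paper's: in both arguments the key point for (i) is that the witnessing set $K\subseteq I$ can be replaced by $K'=K\cap S$ after projecting (since coordinates outside $S$ disappear and finitely supported elements project to finitely supported elements), and for (ii) that $p_{IS}$ is continuous and hence preserves density. The only cosmetic difference is that you unwind the argument element-wise via Proposition~\ref{reformulation:of:definition}, whereas the paper argues directly with inclusions between projections of subgroups using $p_{IJ}=p_{SJ}\circ p_{IS}$.
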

\begin{proof}
(i)
For every $K\subseteq I$, one has
\begin{equation}
\label{eq:star}
p_{IS}\left(H\cap \bigoplus_{i\in K} G_i\right)
\subseteq
p_{IS}(H)\cap \bigoplus_{i\in K\cap S} G_i.
\end{equation}
Let $J\subseteq S$ be a finite set. Since $H$ is (uniformly) controllable,
$p_{IJ}(H)=p_{IJ}(H\cap \bigoplus_{i\in K} G_i)$ holds for some $K\subseteq I$,
 where $K=I$ when $H$ is controllable and $K$ is finite when $H$ is uniformly controllable.
Since $p_{IJ}=p_{SJ}\circ p_{IS}$, from this and
\eqref{eq:star} we get
$$
p_{SJ}(p_{IS}(H))=p_{SJ}\left(p_{IS}\left(H\cap \bigoplus_{i\in K} G_i\right)\right)
\subseteq
p_{SJ}\left(p_{IS}(H)\cap \bigoplus_{i\in K'} G_i\right)
\subseteq
p_{SJ}(p_{IS}(H)),
$$
where $K'=K\cap S$.
This
yields
$p_{SJ}(p_{IS}(H))=p_{SJ}\left(p_{IS}(H)\cap \bigoplus_{i\in K'} G_i\right)$.
When $K$ is finite, $K'$ is also finite, and when $K=I$,
$K'=S$.
This shows that $p_{IS}(H)$ is (uniformly) controllable in $G_S$.

In the proof of (ii) one uses the fact that the projection
$p_{IS}:G\to G_S$ is continuous, and so it preserves density.
\end{proof}

We need to prove the special case of
Theorem
\ref{controllable:and:uniformly:controllable:coincide:for:compact:subgroups}
first.

\begin{lemma}
\label{compact:in:countable:products}
Let $I$ be a countable set and $\{G_i:i\in I\}$ be a family of topological groups.
Then every compact controllable subgroup of $G=\prod_{i\in I} G_i$ is uniformly controllable.
\end{lemma}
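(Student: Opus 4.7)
The plan is to reduce the problem to a Baire category argument inside the compact group $p_{IJ}(H)$, for each fixed finite $J\subseteq I$. Using the countability of $I$, I would enumerate $I=\{i_n:n\in\N\}$ and set $K_n=\{i_1,\dots,i_n\}$. The natural objects to study are the subgroups
$$
L_n=p_{IJ}\!\left(H\cap \bigoplus_{i\in K_n}G_i\right) \subseteq p_{IJ}(H).
$$
Under the tacit assumption that the $G_i$ are Hausdorff (standard when dealing with compact subgroups), $\bigoplus_{i\in K_n}G_i=\prod_{i\in K_n}G_i\times\{1\}$ is closed in $G$, so $H\cap \bigoplus_{i\in K_n}G_i$ is a closed, hence compact, subgroup of $H$, and therefore each $L_n$ is a compact, hence closed, subgroup of the compact Hausdorff group $p_{IJ}(H)$. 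These subgroups form an increasing chain.

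The second step invokes controllability. Since every element of $\bigoplus_{i\in I}G_i$ has finite support, Proposition \ref{reformulation:of:definition}(i) yields $p_{IJ}(H)=\bigcup_{n\in\N}L_n$. Because $p_{IJ}(H)$ is compact Hausdorff, hence Baire, at least one $L_n$ must have non-empty interior. A subgroup with non-empty interior in a topological group is automatically open, and an open subgroup of a compact group has finite index; so I would fix such an $n$ and pick coset representatives $x_1,\dots,x_m$ for $L_n$ in $p_{IJ}(H)$.

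The last step is routine bookkeeping. Using controllability once more, for each $\ell$ choose $g_\ell\in H\cap\bigoplus_{i\in I}G_i$ with $p_{IJ}(g_\ell)=x_\ell$ and let $S_\ell\subseteq I$ be its (finite) support. Then $K=K_n\cup S_1\cup\cdots\cup S_m$ is a finite subset of $I$, and $H\cap \bigoplus_{i\in K}G_i$ contains both all elements of $H\cap \bigoplus_{i\in K_n}G_i$ and every $g_\ell$. Projecting, $p_{IJ}(H\cap \bigoplus_{i\in K}G_i)$ contains $L_n$ together with every coset representative of $L_n$ in $p_{IJ}(H)$, so it equals $p_{IJ}(H)$. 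This is exactly uniform controllability.

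The main---indeed the only non-algebraic---obstacle is the Baire category step that forces some $L_n$ to be open in $p_{IJ}(H)$. Both ingredients on which it rests, namely that $p_{IJ}(H)$ is a Baire space and that open subgroups of compact groups have finite index, rely essentially on the compactness of $H$. This is precisely the point where the compactness hypothesis enters; the remainder of the argument is formal manipulation of finite supports and would apply to any subgroup of a countable direct product.
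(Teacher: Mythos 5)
Your proof is correct and follows essentially the same route as the paper's: express $p_{IJ}(H)$ as a countable union of compact subgroups $p_{IJ}(H\cap\bigoplus_{i\in F}G_i)$ via controllability, apply the Baire category theorem to find one that is open and hence of finite index, and then lift finitely many coset representatives into $H\cap\bigoplus_{i\in I}G_i$ to assemble the finite set $K$. The only cosmetic difference is that you run over an increasing chain $K_n$ exhausting $I$ rather than over all finite subsets of $I$, which changes nothing.
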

\begin{proof}
Let $H$ be a compact controllable subgroup of $G$.
Fix a finite subset $J$ of $I$.
We need to find a finite set $K\subseteq I$
such that
$p_{IJ}(H)=p_{IJ}(H\cap \bigoplus_{i\in K} G_i)$.

For every finite subset $F$ of $I$,
$
H_F=H\cap \bigoplus_{i\in F} G_i
$
is a closed (and thus, compact) subgroup of $H$, so
its continuous homomorphic image
$p_{IJ}(H_F)$ is a compact (and thus, closed) subgroup of
$p_{IJ}(H)$.
The latter group is compact as well, as a continuous image of the compact group $H$.
Note that
$H\cap \bigoplus_{i\in I} G_i= \bigcup\{H_F: F\subseteq I$ is finite$\}$.
Since $H$ is controllable,
\begin{equation}
\label{eq:projection}
p_{IJ}(H)=
p_{IJ}\left(H\cap \bigoplus_{i\in I} G_i\right)=
\bigcup\{p_{IJ}(H_F): F\subseteq I
\mbox{ is finite}
\}.
\end{equation}
Since the collection of finite subsets of the countable set $I$ is countable,
applying \eqref{eq:projection} and the Baire category theorem to $p_{IJ}(H)$,
we can find a finite set $F^*\subseteq I$
such that $N=p_{IJ}(H_{F^*})$ has non-empty interior in $p_{IJ}(H)$.
Since $N$ is a subgroup of $p_{IJ}(H)$, it must be open in
$p_{IJ}(H)$.
Since the latter group is compact, $N$ has finite index in
$p_{IJ}(H)$;
that is, there exists a finite set $Y\subseteq p_{IJ}(H)$ such that
$p_{IJ}(H)=YN$.
Applying \eqref{eq:projection}
we can find a finite set $X\subseteq H\cap \bigoplus_{i\in I} G_i$ with
$Y=p_{IJ}(X)$.
Finally, let $K$ be a finite subset of $I$ such that $F^*\subseteq K$ and
$X\subseteq \bigoplus_{i\in K} G_i$.

Let
$h\in H$ be arbitrary.
Since $p_{IJ}(h)\in p_{IJ}(H)=YN$, there exists $y\in Y$ and $z\in N$ with
$p_{IJ}(h)=yz$.
Pick $x\in X\subseteq H\cap\bigoplus_{i\in K} G_i$ and $h^*\in H_{F^*}\subseteq H\cap\bigoplus _{i\in K} G_i$ such that
$y=p_{IJ}(x)$ and $z=p_{IJ}(h^*)$.
Since $H$ is a subgroup of $G$,
so is $H\cap \bigoplus_{i\in K} G_i$.
Hence,
$h'=xh^*\in H\cap\bigoplus _{i\in K} G_i$.
Finally note that
$p_{IJ}(h')=p_{IJ}(xh^*)=p_{IJ}(x) p_{IJ}(h^*)=yz=p_{IJ}(h)$.
This shows that
$p_{IJ}(H)\subseteq p_{IJ}(H\cap \bigoplus_{i\in K} G_i)$.
The reverse inclusion is obvious.
\end{proof}

\begin{theorem}
\label{controllable:and:uniformly:controllable:coincide:for:compact:subgroups}
Let $\{G_i:i\in I\}$ be a family of topological groups and $G=\prod_{i\in I} G_i$
be its direct product.
For a compact subgroup $H$ of $G$, the following conditions are equivalent:
\begin{itemize}
\item[(i)] $H$ is controllable in $G$;
\item[(ii)] $H$ is uniformly controllable in $G$.
\end{itemize}
\end{theorem}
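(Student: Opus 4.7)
The implication (ii)$\Rightarrow$(i) is immediate from Definition~\ref{def:controllability}. For (i)$\Rightarrow$(ii), fix a finite $J\subseteq I$; the goal is to produce a single finite $K\subseteq I$ with $p_{IJ}(H)=p_{IJ}(H\cap\bigoplus_{i\in K}G_i)$.

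My plan is to reduce to the countable case treated in Lemma~\ref{compact:in:countable:products}. First I would pick any countable $S\subseteq I$ containing $J$ and set $\bar H=p_{IS}(H)\subseteq G_S=\prod_{i\in S}G_i$. Then $\bar H$ is compact, as a continuous image of the compact group $H$, and is controllable in $G_S$ by Proposition~\ref{preservation:by:projections}(i). Since $S$ is countable, Lemma~\ref{compact:in:countable:products} applies to $\bar H$, and its proof furnishes a finite $F^*\subseteq S$ such that $N=p_{SJ}(\bar H\cap\bigoplus_{i\in F^*}G_i)$ is an open, and hence finite-index, subgroup of the compact group $p_{SJ}(\bar H)=p_{IJ}(H)$, together with a finite set $Y\subseteq p_{IJ}(H)$ of coset representatives of $p_{IJ}(H)/N$.

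From here I would mimic the endgame of the Lemma's proof inside $H$ itself. For each $y\in Y$, controllability of $H$ supplies a finitely supported element $x_y\in H\cap\bigoplus_{i\in I}G_i$ with $p_{IJ}(x_y)=y$. Taking $K=F^*\cup\bigcup_{y\in Y}\mathrm{supp}(x_y)$ yields a finite subset of $I$. For a given $h\in H$ I would decompose $p_{IJ}(h)=y\cdot n$ with $y\in Y$ and $n\in N$, select a witness $h^*\in H$ with $p_{IJ}(h^*)=n$ and $h^*|_{S\setminus F^*}=0$ (as guaranteed by the very definition of $N$), and form $x_y\cdot h^*\in H$ as the candidate matching element.

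The main obstacle is precisely that this $h^*$, obtained by lifting an element of $\bar H\cap\bigoplus_{i\in F^*}G_i$ from $G_S$ back to $G$, need not vanish on $I\setminus S$; consequently $x_y\cdot h^*$ may escape $\bigoplus_{i\in K}G_i$, even though its $J$-projection is $y\cdot n=p_{IJ}(h)$ as required. Closing this gap is the technical heart of the argument. One route is to choose $S$ carefully at the outset, e.g.\ by an iterative enlargement that absorbs into $S$ the supports of witnesses encountered along the way, so that at the limit stage the lifts of $N$ to $H$ can be taken with support inside $F^*$. Another route is to exploit the kernel $H\cap\ker p_{IS}$, which is a compact subgroup of $H$, to adjust $h^*$ on $I\setminus S$ back to zero while preserving $p_{IJ}(h^*)=n$. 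Once this support-control step is settled, the rest of the verification---that the same $K$ witnesses uniform controllability of $H$ in $G$---is purely mechanical.
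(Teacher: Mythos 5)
There is a genuine gap, and you have located it yourself: the step you defer as ``the technical heart of the argument'' is not a loose end but the whole difficulty, and neither of your proposed repairs works as stated. The underlying problem is that uniform controllability does not transfer \emph{upward} from the projection $p_{IS}(H)$ to $H$. An element of $p_{IS}(H)\cap\bigoplus_{i\in F^*}G_i$ is the $S$-projection of some $h\in H$ that vanishes on $S\setminus F^*$ but may be completely arbitrary on $I\setminus S$; in general this set is strictly larger than $p_{IS}\left(H\cap\bigoplus_{i\in F^*}G_i\right)$, so the finite-index subgroup $N$ you extract via the Baire category argument need not consist of $J$-projections of elements of $H$ with support in any fixed finite subset of $I$. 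Your first repair (iteratively enlarging $S$ to absorb the supports of witnesses) is circular and has a cardinality obstruction: $F^*$ and $N$ are recomputed each time $S$ changes, and $N$ may be uncountable, so the witnesses whose supports you would need to absorb need not have their supports contained in any countable set. Your second repair fails because multiplying $h^*$ by an element of $H\cap\ker p_{IS}$ can only make $h^*$ vanish on $I\setminus S$ if the function equal to $h^*$ off $S$ and to the identity on $S$ already lies in $H$; that amounts to a splitting of $H$ over $\ker p_{IS}$ which need not exist.

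The paper sidesteps this by proving the contrapositive, which transfers information in the direction that actually works. Assuming $H$ is \emph{not} uniformly controllable at $J$, it uses compactness of the fibers $\Phi_K=H\cap p_{IJ}^{-1}(p_{IJ}(h_K))$ and the finite intersection property of the subgroups $N_F$ to build, by a countable iteration $S_0=J$, $S_{n+1}=S_n\cup\sigma(S_n)$, a countable set $S\supseteq J$ such that $H_S=p_{IS}(H)$ is \emph{still not} uniformly controllable in $G_S$. Lemma~\ref{compact:in:countable:products} then shows $H_S$ is not controllable in $G_S$, and Proposition~\ref{preservation:by:projections}(i) --- which moves controllability in the easy, downward direction --- yields that $H$ is not controllable in $G$. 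In short, the failure of uniform controllability can be pushed down onto a carefully chosen countable subproduct, whereas uniform controllability of an arbitrary countable projection cannot be pulled back up; your argument attempts the pull-back, and that is exactly where it breaks. To salvage your outline you would have to replace the arbitrary choice of $S$ by a construction of the paper's type, at which point you have essentially reproduced its proof.
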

\begin{proof}
The implication (ii)$\to$(i) is trivial.
To check the implication (i)$\to$(ii), we assume that $H$ is not uniformly controllable in $G$, and we shall prove that $H$ is not controllable in $G$ either.

Since $H$ is not uniformly controllable in $G$, we can fix a finite set $J\subseteq I$ such that
\begin{equation}
\label{eq:not:uniformly}
p_{IJ}(H)\not=p_{IJ}\left(H\cap \bigoplus_{i\in K} G_i\right)
\mbox{ for every finite set }
K\subseteq I.
\end{equation}

For each $F\subseteq I$ define
\begin{equation}
\label{def:of:NF}
N_F=\{h\in H: h(i)=1
\mbox{ for all }
i\in F\}
\end{equation}
and note that $N_F$ is a closed subgroup of $H$.

\begin{claim}
For every finite set $K\subseteq I$ there exists a finite set $F\subseteq I$
disjoint from $K$ such that $p_{IJ}(H)\setminus p_{IJ}(N_F)\not=\emptyset$.
\end{claim}
\begin{proof}
Use \eqref{eq:not:uniformly} to fix $h_K\in H$ such that
$p_{IJ}(h_K)\not\in p_{IJ}\left(H\cap \bigoplus_{i\in K} G_i\right)$.
Then
\begin{equation}
\label{def:Phi:K}
\Phi_K=\{h\in H: p_{IJ}(h)=p_{IJ}(h_K)\}=H\cap p_{IJ}^{-1}(p_{IJ}(h_K))
\end{equation}
is a closed subset of $H$ such that
$\Phi_K\cap \bigoplus_{i\in K} G_i=\emptyset$.
Since $H$ is compact, so is $\Phi_K$.

Observe that $\mathscr{N}_K=\{N_S: S\subseteq I\setminus K$ is finite$\}$
is a family of closed subsets of $H$ having the finite intersection property.
Clearly,
$\bigcap \mathscr{N}_K\subseteq\bigoplus_{i\in K} G_i$,
which yields
$\Phi_K\cap \bigcap \mathscr{N}_K=\emptyset$. Since $\Phi_K$ is compact,
$\Phi_K\cap N_F=\emptyset$ for some finite set $F\subseteq I\setminus K$.

Let $h\in N_F$ be arbitrary. Then $h\not\in\Phi_K$, and since
$h\in H$, \eqref{def:Phi:K} implies that $p_{IJ}(h)\not=p_{IJ}(h_K)$.
This shows that
$p_{IJ}(h_K)\not\in p_{IJ}(N_F)$.
Since $p_{IJ}(h_K)\in p_{IJ}(H)$, we get
$p_{IJ}(h_K)\in p_{IJ}(H)\setminus p_{IJ}(N_F)\not=\emptyset$.
\end{proof}

Let $[I]^{<\omega}$ be the family of all finite subsets of $I$. Our claim allows us to define a map $\sigma: [I]^{<\omega}\to[I]^{<\omega}$ such that
\begin{equation}
\label{eq:5:new}
K\cap \sigma(K)=\emptyset
\mbox{ and }
p_{IJ}(H)\setminus p_{IJ}(N_{\sigma(K)})\not=\emptyset
\mbox{ for all }
K\in [I]^{<\omega}.
\end{equation}

Let $S_0=J$ and let $S_{n+1}=S_n\cup \sigma(S_n)$ for every $n\in\N$.
The set $S=\bigcup\{S_n:n\in\N\}$ is at most countable.
Clearly, $J=S_0\subseteq S$.

\begin{claim}
\label{projection:not:uniformly:controllable}
The subgroup $H_S=p_{IS}(H)$ of $G_S=p_{IS}(G)=\prod_{i\in S} G_i$
is not uniformly controllable in $G_S$.
\end{claim}
\begin{proof}
Since $J$ is a finite subset of $S$,
it suffices
to show that
\begin{equation}
\label{eq:5:5}
p_{SJ}(H_S)\setminus p_{SJ}\left(H_S\cap \bigoplus_{i\in K} G_i\right)\not=\emptyset
\end{equation}
for every finite set
$K\subseteq S$.
Fix such a $K$.
Since the sequence $\{S_n:n\in\N\}$ is monotonically increasing
and
$S=\bigcup\{S_n:n\in\N\}$,
there exists
$n\in\N$ with $K\subseteq S_n$.

We claim that
\begin{equation}
\label{eq:Sn}
p_{SJ}\left(H_S\cap \bigoplus_{i\in K} G_i\right)\subseteq p_{IJ}(N_{\sigma(S_n)}).
\end{equation}
Indeed,
let $g\in H_S\cap \bigoplus_{i\in K} G_i$ be arbitrary.
Since $g\in H_S=p_{IS}(H)$, we can
choose $h\in H$ such that $g=p_{IS}(h)$;
that is, $h\restriction_S=g$.
Since
$K\subseteq S_n$, $\sigma(S_n)\cap S_n=\emptyset$
and
$g\in\bigoplus_{i\in K} G_i$, we conclude that
$
g(i)=1
$
for all $i\in \sigma(S_n)$.
Since
$\sigma(S_n)\subseteq S_{n+1}\subseteq S$,
it follows that
$h(i)=g(i)=1$ for all $i\in \sigma(S_n)$.
Hence,
$h\in N_{\sigma(S_n)}$
by \eqref{def:of:NF}.
Therefore,
$p_{SJ}(g)=p_{SJ}(p_{IS}(h))=p_{IJ}(h)\in p_{IJ}(N_{\sigma(S_n)})$.
This finishes the proof of
\eqref{eq:Sn}.

From \eqref{eq:5:new} and \eqref{eq:Sn}, we obtain
$$
\emptyset\not=p_{IJ}(H)\setminus p_{IJ}(N_{\sigma(S_n)})
\subseteq
p_{IJ}(H)\setminus p_{SJ}\left(H_S\cap \bigoplus_{i\in K} G_i\right).
$$
Since $p_{IJ}(H)=p_{SJ}(p_{IS}(H))=p_{SJ}(H_S)$,
this gives
\eqref{eq:5:5}.
\end{proof}

As a continuous image of the compact group $H$, the subgroup
$H_S$ of $G_S=\prod_{i\in S} G_i$ is compact.
Since $S$ is countable,  it follows from
Lemma \ref{compact:in:countable:products}
and Claim \ref{projection:not:uniformly:controllable} that
$H_S$ is not controllable in $G_S$.
Combining this with Proposition \ref{preservation:by:projections}(i), we obtain
that $H$ is not controllable in $G$.
\end{proof}

\begin{corollary}
\label{controllable:and:uniformly:controllable:coincide:in:closed:subgroups:of:compact:products}
Let $\{G_i:i\in I\}$ be a family of compact groups.
Then a closed subgroup $H$ of $G=\prod_{i\in I} G_i$ is controllable in $G$ if and only if $H$ is uniformly controllable in $G$.
\end{corollary}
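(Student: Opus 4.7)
The plan is to reduce this statement immediately to Theorem \ref{controllable:and:uniformly:controllable:coincide:for:compact:subgroups}, whose hypothesis requires the subgroup $H$ to be compact (with no assumption on the ambient groups $G_i$). The only thing I need to supply is the compactness of $H$.

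First I would note that Tychonoff's theorem guarantees that $G=\prod_{i\in I} G_i$ is compact, since each factor $G_i$ is compact. Therefore the closed subgroup $H$ of $G$ is a closed subset of a compact space, hence compact itself.

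Once $H$ is known to be compact, both directions follow at once from Theorem \ref{controllable:and:uniformly:controllable:coincide:for:compact:subgroups}: the forward implication from controllability to uniform controllability is exactly the nontrivial direction (i)$\to$(ii) of that theorem, and the reverse implication is trivial (a uniformly controllable subgroup is always controllable, as noted right after Definition \ref{def:controllability}).

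There is no real obstacle here; the corollary is essentially a packaging of the theorem together with the observation that closed subgroups of products of compact groups are compact. The only thing worth stating carefully is that this reduction uses no additional hypothesis on the $G_i$ beyond compactness, so the conclusion holds in full generality for arbitrary (not necessarily abelian, not necessarily metrizable, not necessarily Hausdorff-separated in any stronger sense) compact topological groups $G_i$.
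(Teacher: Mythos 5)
Your proposal is correct and is precisely the argument the paper intends: the corollary is stated without proof immediately after Theorem \ref{controllable:and:uniformly:controllable:coincide:for:compact:subgroups} because, as you observe, Tychonoff's theorem makes $G$ compact, hence the closed subgroup $H$ is compact, and the theorem applies directly. Nothing further is needed.
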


\section{Controllability vs uniform controllability}

\label{Controllable:non-uniformly:controllable}

In this section we characterize abelian groups $M$ such that
every controllable subgroup of the power $M^I$ is uniformly controllable;
see Theorem
\ref{characterization:of:powers:in:which:all:controllable:subgroups:are:uniformaly:controllable}.
First, we develop some machinery necessary for the proof of this theorem.

Let $M$ be
an abelian
group,
and let $\mathscr{A}=\{A_i:i\in\N\}$ be a strictly ascending chain of subgroups of $M$; that is,
\begin{equation}
\label{chain:of:A_i}
A_0\subsetneq A_1\subsetneq A_2\subsetneq\dots\subsetneq A_i\subsetneq A_{i+1}\subsetneq\dots ,
\end{equation}
and each $A_i$ is a subgroup of $M$.
For each $i\in\N$,
\begin{equation}
\label{H_A:1}
H_{\mathscr{A},i}=\{(a,a,\dots,a)\in M^{i+1}: a\in A_i\}\times\prod_{j=i+1}^\infty \{0\}
\end{equation}
is a subgroup of $G=M^\N$.
Clearly,
\begin{equation}
\label{H_A:2}
H_{\mathscr{A}}=\langle \bigcup_{i\in\N} H_{\mathscr{A},i}\rangle
\end{equation}
is a subgroup of $G$ associated with the chain $\mathscr{A}$.
Note that every element
$h\in H_{\mathscr{A}}$
has a representation
\begin{equation}
\label{decomposition:of:h}
h=\sum_{i=0}^m h_i,
\mbox{ where }
h_i\in H_{\mathscr{A}, i}
\mbox{ for all }
i=0,\dots,m
\mbox{ and }
h_m\not=0.
\end{equation}

We claim that
\begin{equation}
\label{intersection:of:H_A:with:finite:faces}
H_{\mathscr{A}}\cap \bigoplus_{i=0}^k M=\sum_{i=0}^k H_{\mathscr{A},i}
\hskip50pt
\mbox { for all }
k\in\N.
\end{equation}
Fix $k\in\N$.
Let $h$ be an element of the set on the left hand side of \eqref{intersection:of:H_A:with:finite:faces}. Then $h$ has a representation
as in \eqref{decomposition:of:h}.
Note that
$h_m\in H_{\mathscr{A},m}$, \eqref{H_A:1} and $h_m\not=0$ yield
$h_m(m)\not=0$.
Now \eqref{H_A:1} and \eqref{decomposition:of:h} give
$
h(m)=\sum_{i=0}^m h_i(m)=h_m(m)\not=0.
$
Since $h\in \bigoplus_{i=0}^k M$, from this we conclude that $m\le k$.
Combining this with \eqref{decomposition:of:h}, we obtain
$h\in \sum_{i=0}^m H_{\mathscr{A},i}\subseteq \sum_{i=0}^k H_{\mathscr{A},i}$.
This argument proves that
the set on the left hand side of \eqref{intersection:of:H_A:with:finite:faces}
is a subset of the set on the right hand side of the same equation.
The converse inclusion follows easily
from \eqref{H_A:1} and \eqref{H_A:2}.

\begin{claim}
\label{H_A:controllable:but:not:uniformly}
\begin{itemize}
\item[(i)]
$H_{\mathscr{A}}$ is controllable in $G$ but not uniformly controllable in $G$.
\item[(ii)]
If all $A_i$ ($i\in\N$) are countable, then $H_{\mathscr{A}}$ is countable as well.
\end{itemize}

\end{claim}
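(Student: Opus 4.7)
The strategy is to read both parts off the identity \eqref{intersection:of:H_A:with:finite:faces} together with the strict ascent in \eqref{chain:of:A_i}. The key structural observation is that every generator of $H_{\mathscr{A}}$ has finite support: by \eqref{H_A:1}, every element of $H_{\mathscr{A},i}$ vanishes outside $\{0,1,\dots,i\}$. Consequently, from \eqref{H_A:2}, the whole subgroup $H_{\mathscr{A}}$ is contained in $\bigoplus_{i\in\N} M$, so Proposition \ref{examples:of:controllability}(ii) yields at once that $H_{\mathscr{A}}$ is controllable in $G$, disposing of the positive half of (i).

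For the failure of uniform controllability, I would test the definition against the one-point set $J=\{0\}$. The zeroth coordinate of an element of $H_{\mathscr{A},i}$ runs through $A_i$, so applying $p_{\N J}$ to the generating set in \eqref{H_A:2} (and using that the chain $\{A_i\}$ is ascending, so that $\bigcup_{i\in\N}A_i$ is already a subgroup of $M$) gives
$$
p_{\N J}(H_{\mathscr{A}})\;=\;\bigcup_{i\in\N}A_i\;=:\;A.
$$
Now fix any finite $K\subseteq\N$ and set $k=\max K$, taking $k=0$ when $K=\emptyset$. The inclusion $\bigoplus_{i\in K} M\subseteq \bigoplus_{i=0}^{k} M$ combined with \eqref{intersection:of:H_A:with:finite:faces} forces
$$
H_{\mathscr{A}}\cap\bigoplus_{i\in K} M\;\subseteq\;\sum_{i=0}^{k}H_{\mathscr{A},i},
$$
whose projection onto coordinate $0$ is contained in $\sum_{i=0}^{k}A_i=A_k$. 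By the strictness in \eqref{chain:of:A_i}, $A_k\subsetneq A_{k+1}\subseteq A$, so this $K$ cannot witness uniform controllability at $J=\{0\}$. Since $K$ was arbitrary, $H_{\mathscr{A}}$ is not uniformly controllable in $G$.

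For (ii), the diagonal map $a\mapsto(a,\dots,a,0,0,\dots)$ puts $H_{\mathscr{A},i}$ in bijection with $A_i$, so if every $A_i$ is countable, then $\bigcup_{i\in\N}H_{\mathscr{A},i}$ is a countable union of countable sets, and hence the subgroup $H_{\mathscr{A}}$ it generates in the abelian group $G$ is countable as well. There is no real obstacle here; the whole argument is bookkeeping around \eqref{intersection:of:H_A:with:finite:faces}, with the only subtlety being the trivial case analysis needed to guarantee $p_{\{0\}}(H_{\mathscr{A}}\cap\bigoplus_{i\in K}M)\subseteq A_k$ regardless of whether $0\in K$.
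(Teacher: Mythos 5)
Your proposal is correct and follows essentially the same route as the paper: controllability via $H_{\mathscr{A}}\subseteq\bigoplus_{i\in\N}M$ and Proposition \ref{examples:of:controllability}(ii), failure of uniform controllability by projecting onto $J=\{0\}$ and comparing $A_k$ (via \eqref{intersection:of:H_A:with:finite:faces}) with $\bigcup_{i\in\N}A_i$, and countability of $H_{\mathscr{A}}$ from the countability of the generating sets. No gaps.
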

\begin{proof}
(i)
Since $H_{\mathscr{A}}\subseteq \bigoplus_{k\in\N} M$, the subgroup $H_{\mathscr{A}}$ of $G$ is controllable in $G$ by Proposition \ref{examples:of:controllability}(ii).

Let $J=\{0\}$.
Let $K$ be an arbitrary finite subset of $\N$.
Then $K\subseteq \{0,1,\dots,k\}$ for some $k\in\N$.
Combining this with \eqref{chain:of:A_i},
\eqref{H_A:1},
\eqref{intersection:of:H_A:with:finite:faces}
and recalling the fact that $p_{\N J}$ is a homomorphism,
we get
$$
p_{\N J}\left(H_{\mathscr{A}}\cap \bigoplus_{i\in K} M \right)
\subseteq
p_{\N J}\left(H_{\mathscr{A}}\cap \bigoplus_{i=0}^k M\right)
=
p_{\N J}\left(\sum_{i=0}^k H_{\mathscr{A},i}\right)
=
\sum_{i=0}^k p_{\N J}(H_{\mathscr{A},i})
=
\sum_{i=0}^k A_i
=A_k,
$$
where the last equation follows from the fact that $A_k$ is a subgroup of $M$.
Similarly,
since all $A_i$ ($i\in\N$) are subgroups of $M$
and
$p_{\N J}$ is a homomorphism,
from
\eqref{chain:of:A_i},
\eqref{H_A:1} and
\eqref{H_A:2}, we obtain
$$
p_{\N J}(H_{\mathscr{A}})=\langle \bigcup_{i\in\N} p_{\N J}(H_{\mathscr{A},i})\rangle
=
\langle \bigcup_{i\in\N} A_i\rangle
=
\bigcup_{i\in\N} A_i.
$$
Since $A_k$ is a proper subset of $\bigcup_{i\in\N} A_i$ by
\eqref{chain:of:A_i},
we deduce that
$$
p_{\N J}(H_{\mathscr{A}})\not=p_{\N J}\left(H_{\mathscr{A}}\cap \bigoplus_{i\in K} M\right).
$$
This shows that
$H_{\mathscr{A}}$ is not uniformly controllable in $G$.

(ii)
This easily follows from \eqref{H_A:1} and \eqref{H_A:2}.
\end{proof}

\begin{theorem}
\label{characterization:of:powers:in:which:all:controllable:subgroups:are:uniformaly:controllable}
For an abelian group $M$, a set $I$ and the group $G=M^I$, the following conditions are equivalent:
\begin{itemize}
\item[(i)] every countable controllable subgroup of $G$ is uniformly controllable in $G$;
\item[(ii)] every controllable subgroup of $G$ is uniformly controllable in $G$;
\item[(iii)] either $I$ is finite or $M$ is finitely generated.
\end{itemize}
\end{theorem}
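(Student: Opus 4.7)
The plan is to establish the cycle (iii)$\to$(ii)$\to$(i)$\to$(iii). The implication (ii)$\to$(i) is immediate, since ``every controllable'' applies in particular to ``every countable controllable''. For (iii)$\to$(ii) I treat the two subcases separately. If $I$ is finite, then $\bigoplus_{i\in I} M = M^I = G$, so $H\cap\bigoplus_{i\in I} M = H$ for every subgroup $H$ of $G$, and $K=I$ trivially witnesses uniform controllability. If instead $M$ is finitely generated abelian, the structure theorem implies that every subgroup of $M$ is itself finitely generated, so $M$ satisfies the ascending chain condition, and Proposition \ref{all:groups:satisfy:the:ascending:chain:condition} applied to the constant family $G_i = M$ ($i\in I$) yields (ii).

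For the main implication (i)$\to$(iii) I argue by contrapositive: assuming $I$ is infinite and $M$ is not finitely generated, I construct a countable controllable subgroup of $G=M^I$ that fails to be uniformly controllable. Since $M$ is not finitely generated, a routine induction produces a sequence $a_0,a_1,\ldots$ in $M$ for which the subgroups $A_n=\langle a_0,\ldots,a_n\rangle$ form a strictly ascending chain as in \eqref{chain:of:A_i}: at stage $n$ the subgroup $A_n$ is finitely generated and hence a proper subgroup of $M$, so some $a_{n+1}\in M\setminus A_n$ is available. Each $A_n$ is finitely generated and therefore countable. Using the infinitude of $I$, I fix an injection $\N\hookrightarrow I$ and identify $\N$ with its image, so that $M^\N$ embeds canonically into $M^I$ via extension by $0$ outside $\N$. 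Setting $\mathscr{A}=\{A_n:n\in\N\}$, I transport the subgroup $H_{\mathscr{A}}$ defined in \eqref{H_A:1}--\eqref{H_A:2} into $M^I$. Since $H_{\mathscr{A}}\subseteq\bigoplus_{i\in I} M$, it is controllable in $G$ by Proposition \ref{examples:of:controllability}(ii), and it is countable by Claim \ref{H_A:controllable:but:not:uniformly}(ii).

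To verify that $H_{\mathscr{A}}$ is not uniformly controllable in $M^I$, I take $J=\{0\}\subseteq\N\subseteq I$. For any finite $K\subseteq I$, the set $K\cap\N$ is contained in some $\{0,1,\ldots,k\}$, and since every element of $H_{\mathscr{A}}$ is supported in $\N$, the intersection $H_{\mathscr{A}}\cap\bigoplus_{i\in K} M$ coincides with $H_{\mathscr{A}}\cap\bigoplus_{i\in K\cap\N} M$. The computation already carried out in the proof of Claim \ref{H_A:controllable:but:not:uniformly}(i) therefore yields
$$
p_{IJ}\Bigl(H_{\mathscr{A}}\cap\bigoplus_{i\in K} M\Bigr)\subseteq A_k\subsetneq\bigcup_{n\in\N} A_n = p_{IJ}(H_{\mathscr{A}}),
$$
which is the required failure of uniform controllability for this $J$. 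The only genuinely non-routine ingredient in the whole argument is the inductive extraction of the strictly ascending chain $\mathscr{A}$ from the non-finite-generation of $M$; everything else amounts to a transparent transfer of Claim \ref{H_A:controllable:but:not:uniformly} from $M^\N$ to the power $M^I$ along the coordinate embedding.
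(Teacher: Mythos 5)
Your proposal is correct and follows essentially the same route as the paper: (iii)$\to$(ii) via the finite-index-set triviality and Proposition \ref{all:groups:satisfy:the:ascending:chain:condition}, and (i)$\to$(iii) by contrapositive, extracting a strictly ascending chain of finitely generated subgroups from the failure of finite generation and transporting the group $H_{\mathscr{A}}$ of Claim \ref{H_A:controllable:but:not:uniformly} into $M^I$ along a copy of $\N$ inside $I$. Your explicit verification that non-uniform controllability survives the coordinate embedding is a detail the paper leaves to the reader, but the argument is the same.
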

\begin{proof}
(i)$\to$(iii)
We assume that (iii) fails, and we shall prove that (i) fails as well.
Since (iii) fails, the set $I$ is infinite and the group $M$ is not finitely generated.
The latter assumption allows us to
find a chain $\mathscr{A}=\{A_k:k\in\N\}$ of finitely generated subgroups $A_k$ of $M$ satisfying
\eqref{chain:of:A_i}.
By Claim
\ref{H_A:controllable:but:not:uniformly},
the corresponding subgroup
$H_{\mathscr{A}}$
of $M^\N$ is controllable in $M^\N$ but not uniformly controllable in
$M^\N$.
Since $I$ is infinite, we can
fix a countably infinite subset $N$ of
$I$.
Let $\varphi:N\to \N$ be a bijection.
It naturally induces the group isomorphism
$\theta: M^\N\to M^N$ defined by $\theta(h)=h\circ \varphi$ for
$h\in M^\N$.
One easily checks that
$H'=\theta(H_{\mathscr{A}})$
is a controllable subgroup of $M^N$ that is not uniformly controllable in $M^N$.
Now $H=H'\times \{\mathbf{0}\}$
is a controllable subgroup of $G$
that is not uniformly controllable in $G$.
(Here $\mathbf{0}$ is the zero element of the group $M^{I\setminus N}$.)
Therefore, (i) fails.

(iii)$\to$(ii)
If the set $I$ is finite, then (ii) trivially holds, as one can take $K=I$ in the definition of uniform controllability.
If the group $M$ is finitely generated, then it satisfies the ascending chain condition. Applying Proposition \ref{all:groups:satisfy:the:ascending:chain:condition}, we conclude that (ii) holds.

(ii)$\to$(i) is trivial.
\end{proof}

The next example demonstrates that compactness of $H$ is essential in Theorem
\ref{controllable:and:uniformly:controllable:coincide:for:compact:subgroups}
and closedness of $H$ is essential in Corollary
\ref{controllable:and:uniformly:controllable:coincide:in:closed:subgroups:of:compact:products}.

\begin{example}
\label{power:of:Z(2)}
Let $M=\Z(2)^\N$ be the infinite compact metric group of order $2$.
Since $M$ is not finitely generated, Theorem
\ref{characterization:of:powers:in:which:all:controllable:subgroups:are:uniformaly:controllable}
implies that  {\em $G=M^\N$ has a countable subgroup $H$
which is controllable in $G$ but not uniformly controllable in $G$\/}. Clearly, {\em $H$ has order $2$\/} as well.
\end{example}

Item (i) of
our next theorem demonstrates that whenever the countable power $G=M^\N$ of some abelian group $M$ has at least one subgroup that is controllable in $G$ but is not uniformly controllable in $G$, then the family of all such subgroups is rather large. Item (ii) of this theorem establishes the version of this fact for countable subgroups of $G$.

\begin{theorem}
\label{many:controllable:non-uniformly:controllable}
Let $M$ be an abelian group,
$\mathscr{H}$ be the family of all subgroups of $G=M^\N$ which are controllable in $G$ but not uniformly controllable in $G$,
and let $\mathscr{H}_c=\{H\in\mathscr{H}: |H|\le|\N|\}$.
\begin{itemize}
\item[(i)]
Either $\mathscr{H}=\emptyset$ or
$|\mathscr{H}|\ge 2^{|M|}\ge \cont$.
\item[(ii)]
If $M$ is countably infinite, then either $\mathscr{H}_c=\emptyset$ or
$|\mathscr{H}_c|=\cont$.
\end{itemize}
\end{theorem}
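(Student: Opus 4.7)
The plan is to build many subgroups in $\mathscr{H}$ by reusing the construction of $H_{\mathscr{A}}$ from Claim~\ref{H_A:controllable:but:not:uniformly}. If $\mathscr{H}\ne\emptyset$, Theorem~\ref{characterization:of:powers:in:which:all:controllable:subgroups:are:uniformaly:controllable} forces $M$ to be non-finitely generated (as $I=\N$ is infinite), so $|M|\ge\aleph_0$ and one can fix a strictly ascending chain $\mathscr{A}=\{A_n\}$ of subgroups of $M$. The associated subgroup $H_{\mathscr{A}}\subseteq\bigoplus_{i\in\N}M$ is then controllable and not uniformly controllable, with witness $J=\{0\}$.

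I would next split $\N=\N_0\sqcup\N_1$ into two infinite subsets and, after moving $H_{\mathscr{A}}$ to a copy $H^*\subseteq M^{\N_0}\subseteq M^\N$ via a bijection $\N\to\N_0$, define $H_L:=H^*+L\subseteq\bigoplus_{i\in\N}M$ for each subgroup $L\subseteq\bigoplus_{i\in\N_1}M$. Each $H_L$ is automatically controllable by Proposition~\ref{examples:of:controllability}(ii). For non-uniform controllability the coordinate $j_0\in\N_0$ corresponding to $0\in\N$ still witnesses failure: since $L\subseteq M^{\N_1}$ contributes nothing under $p_{\{j_0\}}$, the computation of Claim~\ref{H_A:controllable:but:not:uniformly}(i) goes through unchanged to give $p_{\{j_0\}}(H_L)=\bigcup_n A_n$ while $p_{\{j_0\}}(H_L\cap\bigoplus_{i\in K}M)\subseteq A_k$ for every finite $K\subseteq\N$. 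The assignment $L\mapsto H_L$ is injective because $H_L\cap M^{\N_1}=L$ (using $H^*\cap M^{\N_1}=\{0\}$), hence $|\mathscr{H}|\ge|\mathrm{Sub}(\bigoplus_{i\in\N_1}M)|$.

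Part~(ii) falls out of the same construction: for $M$ countably infinite (and $\mathscr{H}_c\ne\emptyset$, which is equivalent to $\mathscr{H}\ne\emptyset$ since each $H_L$ built above with $L$ countable is itself countable), the specific subgroups $L_X=\bigoplus_{i\in X}M$ for $X\subseteq\N_1$ are countable and pairwise distinct, yielding $\cont$ countable members of $\mathscr{H}_c$; the matching upper bound $|\mathscr{H}_c|\le\cont$ follows from $|G|=\cont$ and the fact that every countable subgroup is determined by a countable generating set. The principal remaining obstacle for~(i) is the cardinal inequality $|\mathrm{Sub}(\bigoplus_{i\in\N}M)|\ge 2^{|M|}$ for arbitrary infinite abelian $M$; this is delicate because groups such as $\Z$ and $\Z(p^\infty)$ have only countably many subgroups on their own, so the direct-sum structure is essential. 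I would prove it by a case analysis on $M$, exhibiting in each case a subgroup of $\bigoplus_{i\in\N}M$ containing either a free abelian group of rank $|M|$, an $\mathbb{F}_p$-vector space of dimension $|M|$ for some prime $p$, or a direct sum of $|M|$ nontrivial pieces, and reading off $2^{|M|}$ distinct subgroups from its submodule lattice; this is the combinatorial heart of~(i).
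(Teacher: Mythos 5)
Your construction is sound and takes a genuinely different route from the paper's. The paper parametrizes its $2^{|M|}$ examples by varying the ascending chain itself: in the countable case it transplants one fixed bad subgroup $K$ onto each of the $\cont$ infinite subsets $N\subseteq\N$, and in the uncountable case it extracts an independent set $Y\subseteq M$ with $|Y|=|M|$ (using $|M|=\sup\{r_p(M):p\in\Prm\cup\{0\}\}$ for uncountable $M$) and builds a distinct chain $\mathscr{A}(Z)$ for each infinite $Z\subseteq Y$, distinguishing the resulting groups by their projections onto the coordinate $0$. You instead fix a single witness $H^*$ supported on $\N_0$ and perturb it by arbitrary subgroups $L$ supported on the disjoint coordinate set $\N_1$; controllability is free from Proposition~\ref{examples:of:controllability}(ii), non-uniform controllability survives because $L$ is invisible at the coordinate $j_0$, and injectivity of $L\mapsto H_L$ follows from $H_L\cap M^{\N_1}=L$. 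This is arguably cleaner, and your part~(ii) is complete as written (the sub-sums $L_X=\bigoplus_{i\in X}M$ do give $\cont$ distinct countable groups, and the upper bound is routine). The one caveat is that your part~(i) is not finished: the inequality $|\mathrm{Sub}(\bigoplus_{i\in\N_1}M)|\ge 2^{|M|}$, which you correctly identify as the combinatorial heart, is exactly where the paper's Case~2 machinery lives, so you have relocated that work rather than avoided it. The lemma is true and your sketch is the right one --- for countable $M$ take the sub-sums of $\bigoplus_{i\in\N_1}\langle m\rangle$ for any $m\ne 0$, and for uncountable $M$ you need an independent subset of $M$ of cardinality $|M|$, which is precisely the fact $|M|=\sup\{r_p(M):p\in\Prm\cup\{0\}\}$ invoked by the paper --- but until that case analysis is written out, part~(i) rests on an unproved claim.
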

\begin{proof}
Suppose that $\mathscr{H}\not=\emptyset$. (In particular, this assumption trivially holds when $\mathscr{H}_c\not=\emptyset$.)
Applying
Theorem \ref{characterization:of:powers:in:which:all:controllable:subgroups:are:uniformaly:controllable},
we conclude that
$M$
is not finitely generated.
In particular, $M$ is infinite, and so $2^{|M|}\ge \cont$.
We consider two cases, depending on the cardinality of the group $M$.

\smallskip
{\em Case 1.\/} {\sl $M$ is countably infinite.\/}
In this case, $2^{|M|}=\cont$.
To prove both items (i) and (ii),
it suffices to show that
$|\mathscr{H}_c|\ge \cont$.
Indeed, $|\mathscr{H}|\ge |\mathscr{H}_c|$, which yields (i).
Since $|M^\N|=|\N|^{|\N|}=\cont$, we have
$|\mathscr{H}_c|\le |M^\N|^\omega=\cont^\omega=\cont$.
Combining it with $|\mathscr{H}_c|\ge \cont$, we get (ii).

Since $M$ is not finitely generated, by
Theorem \ref{characterization:of:powers:in:which:all:controllable:subgroups:are:uniformaly:controllable}
the product $M^\N$ contains a countable subgroup $K$ that is controllable in $M^\N$ but not uniformly controllable in $M^\N$.
Now we proceed similarly to the proof of the implication (i)$\to$(iii) of
Theorem
\ref{characterization:of:powers:in:which:all:controllable:subgroups:are:uniformaly:controllable}.
The family $\mathscr{N}$ of all infinite subsets of $\N$ has cardinality $\cont$. For every $N\in\mathscr{N}$ we fix a bijection $\varphi_N:N\to \N$, which induces
the group isomorphism
$\theta_N: M^\N\to M^N$ defined by $\theta_N(h)=h\circ \varphi_N$ for
$h\in M^\N$.
One easily checks that
$H_{N}'=\theta_N(K)$
is a countable controllable subgroup of $M^N$ that is not uniformly controllable in $M^N$.
Now $H_N=H_N'\times \{\mathbf{0}_N\}$
is a countable controllable subgroup of $G$
that is not uniformly controllable in $G$.
(Here $\mathbf{0}_N$ is the zero element of the group $M^{\N\setminus N}$.)
Therefore,
$\{H_N:N\in\mathscr{N}\}\subseteq \mathscr{H}_c$.
Finally, one easily sees that $H_{N_1}\not= H_{N_2}$ whenever $N_1,N_2\in\mathscr{N}$ and $N_1\not= N_2$.
This proves that $|\mathscr{H}_c|\ge |\{H_N:N\in\mathscr{N}\}|=|\mathscr{N}|=\cont$.

\smallskip
{\em Case 2.\/} {\sl $M$ is uncountable.\/}
In this case, $|M|=\sup\{r_p(M): p\in\mathbb{P}\cup\{0\}\}$,
where $\mathbb{P}$ is the set of all prime numbers and
$r_p(M)$ is a $p$-rank of $M$.
For every $p\in \mathbb{P}\cup\{0\}$ choose a $p$-free subset $Y_p$
of $M$ with $|Y_p|=r_p(M)$. Define $Y=\bigcup\{Y_p: p\in\mathbb{P}\cup\{0\}\}$.
Then $|Y|=\sup\{r_p(M): p\in\mathbb{P}\cup\{0\}\}=|M|$
and the set $Y$ is {\em independent\/};
that is, $\langle Z\rangle \cap \langle Y\setminus Z\rangle =\{0\}$ for every
non-empty set $Z\subseteq Y$.

Define $\mathscr{Z}=\{Z\subseteq Y: Z$ is infinite$\}$.
Let $Z\in\mathscr{Z}$. Since $Z$ is infinite,
we can choose a strictly increasing sequence $\{Z_i:i\in\N\}$ of subsets of $Z$
such that $Z=\bigcup_{i\in\N} Z_i$. Since $Z\subseteq Y$ and $Y$ is independent,
$\mathscr{A}(Z)=\{\langle Z_i\rangle: i\in\N\}$ is a strictly ascending chain of subgroups of $M$ with $\bigcup\mathscr{A}(Z)=\langle Z\rangle$.
By Claim
\ref{H_A:controllable:but:not:uniformly},
$H_{\mathscr{A}(Z)}\in \mathscr{H}$.
Therefore,
$|\mathscr{H}|\ge |\{H_{\mathscr{A}(Z)}:Z\in\mathscr{Z}\}|$.
Since $|\mathscr{Z}|=2^{|Y|}=2^{|M|}$, it remains only check that
$H_{\mathscr{A}(Z_0)}\not=H_{\mathscr{A}(Z_1)}$ whenever $Z_0,Z_1\in\mathscr{Z}$
and $Z_0\not= Z_1$.
Indeed, there exists $i=0,1$ such that $Z_i\setminus Z_{1-i}\not=\emptyset$.
Since $Z_0\cup Z_1\subseteq Y$ and $Y$ is independent,
it follows that
$Z_i\setminus \langle Z_{1-i}\rangle\not=\emptyset$.
Let $J=\{0\}$.
Since $p_{\N J}(H_{\mathscr{A}(Z_j)})=\langle Z_j\rangle$ for $j=0,1$,
we conclude that
$p_{\N J}(H_{\mathscr{A}(Z_i)})\setminus
p_{\N J}(H_{\mathscr{A}(Z_{1-i})})\not=\emptyset$.
This implies that $H_{\mathscr{A}(Z_0)}\not=H_{\mathscr{A}(Z_1)}$.
\end{proof}

\begin{corollary}
\label{lots:of:controllable:not:uniformly:controllable:subgroups}
Let $\mathscr{H}$ be the family of all subgroups of $\T^\N$ that are controllable in $\T^\N$ but not uniformly controllable in $\T^\N$.
Then $|\mathscr{H}|=2^\cont$.
\end{corollary}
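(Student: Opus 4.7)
The plan is to derive the corollary as an immediate specialization of Theorem~\ref{many:controllable:non-uniformly:controllable}(i) to the group $M=\T$, so essentially all of the work will be in verifying the dichotomy hypothesis of that theorem and reading off the cardinal arithmetic.

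First I would check that $\mathscr{H}\ne\emptyset$. The group $\T$ is abelian and uncountable, hence not finitely generated, while the index set $\N$ is infinite. Thus condition~(iii) of Theorem~\ref{characterization:of:powers:in:which:all:controllable:subgroups:are:uniformaly:controllable} fails for $M=\T$ and $I=\N$, and by the equivalence proved there, condition~(ii) fails as well. In other words, there exists at least one subgroup of $\T^\N$ that is controllable in $\T^\N$ but not uniformly controllable in $\T^\N$, which is precisely the assertion that $\mathscr{H}\ne\emptyset$. With the dichotomy in Theorem~\ref{many:controllable:non-uniformly:controllable}(i) now triggered on its non-trivial alternative, we obtain $|\mathscr{H}|\ge 2^{|\T|}=2^\cont$.

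For the matching upper bound, I would simply note that $|\T^\N|=\cont^{\aleph_0}=\cont$, so $\T^\N$ has at most $2^\cont$ subsets, and a fortiori at most $2^\cont$ subgroups. Combining the two inequalities yields $|\mathscr{H}|=2^\cont$. There is no genuine obstacle: the entire argument is bookkeeping, with the only points to double-check being that the failure of finite generation for $\T$ is what forces $\mathscr{H}\ne\emptyset$ and that $|\T|=\cont$ feeds through to give exactly $2^\cont$ in the lower bound.
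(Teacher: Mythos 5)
Your argument is correct and is essentially identical to the paper's own proof: nonemptiness of $\mathscr{H}$ via Theorem~\ref{characterization:of:powers:in:which:all:controllable:subgroups:are:uniformaly:controllable} (since $\T$ is not finitely generated), the lower bound $2^{|\T|}=2^\cont$ via Theorem~\ref{many:controllable:non-uniformly:controllable}, and the upper bound from $|\T^\N|=\cont$. Nothing to add.
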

\begin{proof}
Since $\T$ is not finitely generated, $\mathscr{H}\not=\emptyset$ by
Theorem
\ref{characterization:of:powers:in:which:all:controllable:subgroups:are:uniformaly:controllable}.
Therefore,
$|\mathscr{H}|\ge 2^{|\T|}=2^\cont$
by Theorem \ref{many:controllable:non-uniformly:controllable}.
Since $|\T^\N|=\cont$, the inverse inclusion
$|\mathscr{H}|\le 2^{|\T^\N|}=2^\cont$ also holds.
\end{proof}

\section{Weak controllability vs controllability}

\label{weakly:controllable:non:controllable}

In this section we demonstrate
that the notions or weak controllability and controllability
differ for subgroups
of $\T^\N$, thereby
showing that the assumption that each $G_i$ is discrete
in Proposition \ref{controllability}
cannot be replaced with the assumption that each $G_i$ is
compact,
and
the assumption that each $G_i$ is finite in
Corollary
\ref{uniform:controllability}
cannot be weakened to the assumption that each $G_i$ is
compact,
even in the abelian case.
In fact, we prove that the family $\mathscr{H}$ of all weakly controllable
non-controllable subgroups of $\T^\N$ has cardinality $2^\cont$, which is the maximal
size possible.
Furthermore, we exhibit a compact member of $\mathscr{H}$ in
Example \ref{compact:subgroup}
and
a countable torsion member of $\mathscr{H}$ in Example \ref{torsion:subgroup}.

First, we develop some machinery necessary for proving these results.

Let $Y=\{y_k:k\in\N\}\subseteq t(\T)\setminus\{0\}$ be a sequence
converging to $0$ in $\T$.
For every $k\in\N$
define $f_k\in \T^\N$ by letting
\begin{equation}
\label{eq:xn}
f_k(n)=
\left\{ \begin{array}{ll}
y_k & \mbox{ if } n\le k \\
0, & \mbox{ if } n>k
\end{array} \right.
\hskip50pt
\mbox{for }
n\in\N.
\end{equation}
Since $Y\subseteq t(\T)$,
for every $k\in\N$ we can fix $s_k\in\N$ such that $s_k y_k=0$.
From this and \eqref{eq:xn}, it follows that $s_k f_k=0$ for every $k\in\N$.
Therefore,
\begin{equation}
\label{cyclic:group:yk}
\langle f_k\rangle=\{m f_k: m\in\N, 0\le m<s_k\}
\mbox{ for every }
k\in\N.
\end{equation}
In particular,
\begin{equation}
\label{FY:DY}
F_Y=\{f_k:k\in\N\}\subseteq t(\T^\N).
\end{equation}
Let $D_Y=\langle F_Y\rangle$.

\begin{claim}
\label{claim:about:g}
If $g\in \overline{D_Y}$, then
$g(n)-g(n+1)\in \langle y_n \rangle$
for every $n\in\N$.
\end{claim}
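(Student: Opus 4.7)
The plan is to verify the claim first on $D_Y$ by direct computation, and then extend to the closure using a continuity argument together with the fact that $\langle y_n\rangle$ is closed in $\T$.

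First, I would fix $n\in\N$ and compute $f_k(n)-f_k(n+1)$ for an arbitrary $k\in\N$ from the definition \eqref{eq:xn}. There are three cases: if $k<n$, both $f_k(n)$ and $f_k(n+1)$ equal $0$; if $k>n$, both equal $y_k$; and if $k=n$, we get $f_n(n)-f_n(n+1)=y_n-0=y_n$. Thus, for every $k\in\N$,
\[
f_k(n)-f_k(n+1)=
\begin{cases}
y_n & \text{if } k=n,\\
0 & \text{if } k\ne n.
\end{cases}
\]

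Next, I would note that every $g\in D_Y=\langle F_Y\rangle$ is a finite integer linear combination $g=\sum_{k} m_k f_k$ with $m_k\in\Z$. Since the evaluation maps $g\mapsto g(n)$ and $g\mapsto g(n+1)$ are homomorphisms, the previous computation gives
\[
g(n)-g(n+1)=\sum_{k} m_k\bigl(f_k(n)-f_k(n+1)\bigr)=m_n y_n\in\langle y_n\rangle.
\]
So the claim holds throughout $D_Y$.

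Finally, to pass to $\overline{D_Y}$, I would observe that $y_n\in t(\T)$, hence $\langle y_n\rangle$ is a \emph{finite} subgroup of $\T$ and therefore closed in $\T$. The map $\varphi_n:\T^\N\to\T$ defined by $\varphi_n(g)=g(n)-g(n+1)$ is continuous, since it is the difference of two coordinate projections. Consequently $\varphi_n^{-1}(\langle y_n\rangle)$ is a closed subset of $\T^\N$ that contains $D_Y$, so it contains $\overline{D_Y}$ as well. Hence $g(n)-g(n+1)=\varphi_n(g)\in\langle y_n\rangle$ for every $g\in\overline{D_Y}$. The only step that required any thought is the continuity/closedness argument, but this is immediate once one notices that torsion elements of $\T$ generate finite, and thus closed, subgroups.
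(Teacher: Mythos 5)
Your proof is correct and follows essentially the same route as the paper's: verify the identity $g(n)-g(n+1)=m_n y_n$ on $D_Y$ by direct computation with the $f_k$, then pass to $\overline{D_Y}$ using that $\langle y_n\rangle$ is finite, hence closed in $\T$. Your closure step via the continuous map $\varphi_n$ and preimages of closed sets is in fact slightly cleaner than the paper's, which approximates $g$ by a convergent sequence in the metric space $\T^\N$ and extracts a subsequence with constant $n$-th coefficient before taking limits --- an extraction your argument shows to be unnecessary.
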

\begin{proof}
Let
$g\in \overline{D_Y}$ and $n\in \N$.
Since $g\in \overline{D_Y}\subseteq \T^\N$
and $\T^\N$ is metric, we can fix a sequence
$\{h_j:j\in\N\}\subseteq D_Y$
converging to $g$.
For each $j\in\N$,
from $h_j\in D_Y=\langle F_Y\rangle=\langle \{f_k:k\in\N\}\rangle$
we conclude that
there exists a
representation
$h_j=\sum_{k\in\N} m_{j,k} f_k$ such that $\{m_{j,k}:k\in\N\}\subseteq \N$
and
the set $\{k\in\N:m_{j,k}\not=0\}$ is finite; moreover,
by
\eqref{cyclic:group:yk},
we may assume, without loss of generality, that
$0\le m_{j,k}< s_k$
for all $k\in\N$.
Since $m_{j,n}\in \{0,1,\dots,s_n-1\}$
for all $j\in\N$, there exist
$m\in \N$
and
an infinite
set $J\subseteq\N$ such that
$m_{j,n}=m$
for all $j\in J$.
Since the sequence $\{h_j:j\in\N\}$ converges to $g$, its subsequence
$\{h_j:j\in J\}$ converges to $g$ as well.

Let $j\in J$ be arbitrary.
By \eqref{eq:xn},
$$
h_j(n)=\sum_{k\in\N} m_{j,k} f_k(n)=\sum_{k\le n-1} m_{j,k} f_k(n)
+m_{j,n} f_n(n)
+\sum_{k\ge n+1} m_{j,k} f_k(n)
=
m y_n + \sum_{k\ge n+1} m_{j,k} y_k
$$
and
$$
h_j(n+1)=\sum_{k\in\N} m_{j,k} f_k(n+1)=
\sum_{k\le n} m_{j,k} f_k(n+1)
+\sum_{k\ge n+1} m_{j,k} f_k(n+1)
=
\sum_{k\ge n+1} m_{j,k} y_k,
$$
so
$h_j(n)-h_j(n+1)=my_n\in \langle y_n\rangle$.
Since $\langle y_n\rangle$ is a finite subgroup of $\T$, it is closed
in $\T$, and so
$$
g(n)-g(n+1)=
\lim_{j\to\infty, j\in J}
h_j(n)
-
\lim_{j\to\infty, j\in J}
h_j(n+1)
=
\lim_{j\to\infty, j\in J}
(h_j(n)-h_j(n+1))
\in \langle y_n\rangle,
$$
as required.
\end{proof}

For every $x\in \T$, define $c_x\in\T^\N$ by letting
 $c_x(n)=x$ for all $n\in \N$.

\begin{claim}
\label{weakly:controllable:not:controllable}
Suppose that $x\in\T\setminus \langle Y\rangle$
and $H$ is a subgroup of $\T^\N$ such that
$D_Y\subseteq H\subseteq \overline{D_Y}$
and $c_x\in H$.
Then $H$ is weakly controllable but not controllable.
\end{claim}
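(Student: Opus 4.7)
The plan is to verify weak controllability directly from the hypothesis $D_Y \subseteq H \subseteq \overline{D_Y}$, and to refute controllability by testing the constant function $c_x$ against the singleton $J = \{0\}$, using Claim \ref{claim:about:g} as the key telescoping tool.

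For weak controllability, I would first observe that each generator $f_k$ of $D_Y$ has finite support by \eqref{eq:xn}, so $D_Y \subseteq \bigoplus_{n \in \mathbb{N}} \mathbb{T}$. Combined with $D_Y \subseteq H$, this gives $D_Y \subseteq H \cap \bigoplus_{n \in \mathbb{N}} \mathbb{T}$. Then, for any $h \in H \subseteq \overline{D_Y}$, there is a sequence in $D_Y$ (hence in $H \cap \bigoplus_{n \in \mathbb{N}} \mathbb{T}$) converging to $h$. So $H \cap \bigoplus_{n \in \mathbb{N}} \mathbb{T}$ is dense in $H$, which is weak controllability.

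For non-controllability, I would take $h = c_x \in H$ and $J = \{0\}$, and show that no $g \in H \cap \bigoplus_{n \in \mathbb{N}} \mathbb{T}$ satisfies $g(0) = x$. Indeed, any such $g$ has finite support, so there is some $N \in \mathbb{N}$ with $g(n) = 0$ for all $n \ge N$. Since $g \in H \subseteq \overline{D_Y}$, Claim \ref{claim:about:g} gives $g(n) - g(n+1) \in \langle y_n \rangle$ for each $n$. Telescoping,
\[
g(0) = \sum_{n=0}^{N-1} \bigl(g(n) - g(n+1)\bigr) + g(N) \in \langle y_0\rangle + \langle y_1\rangle + \cdots + \langle y_{N-1}\rangle \subseteq \langle Y\rangle.
\]
Since $x \notin \langle Y \rangle$ by hypothesis, we cannot have $g(0) = x$. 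By Proposition \ref{reformulation:of:definition}(i), $H$ is not controllable.

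There is no real obstacle here: weak controllability is immediate from the density hypothesis, and the non-controllability argument is a short telescoping computation once Claim \ref{claim:about:g} is in hand. The only point deserving care is making explicit that $D_Y$ sits inside $H \cap \bigoplus_{n \in \mathbb{N}} \mathbb{T}$ (which uses the finite support of each $f_k$) and that the finite support of $g$ terminates the telescoping sum cleanly at an index where $g$ vanishes.
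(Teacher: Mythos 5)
Your proof is correct and follows essentially the same route as the paper: weak controllability comes from $D_Y\subseteq H\cap\bigoplus_{n\in\N}\T$ together with $H\subseteq\overline{D_Y}$, and non-controllability is the same telescoping argument at $J=\{0\}$ using Claim \ref{claim:about:g} and the finite support of $g$. No gaps.
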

\begin{proof}
Note that $F_Y\subseteq \bigoplus_{n\in\N}\T$ by
\eqref{eq:xn}
and
\eqref{FY:DY},
so $D_Y=\langle F_Y\rangle \subseteq \bigoplus_{n\in\N}\T$
as well. Since $D_Y\subseteq H$,
we get
$D_Y\subseteq H\cap\bigoplus_{n\in\N}\T$.
Since $D_Y$ is dense in $H$,
we conclude that
$H\cap\bigoplus_{n\in\N}\T$ is dense in $H$ as well.
This shows that $H$ is weakly controllable.

Suppose that $H$ is controllable.
Applying the definition of controllability to $J=\{0\}$,
we get
$x=p_{\N J}(c_x)\in p_{\N J}(H)=p_{\N J}(H\cap\bigoplus_{i\in\N} \T)$,
so there exists $g\in H\cap\bigoplus_{i\in\N} \T$ such that
$x=p_{\N J}(g)=g(0)$.
Since $g\in \bigoplus_{i\in\N} \T$,
there exists $n\in \N$ such that $g(i)=0$ for all integers $i>n$.
In particular,
\begin{equation}
\label{eq:x}
x=g(0)=g(0)-g(n+1).
\end{equation}
Since $g\in H\subseteq \overline{D_Y}$,
from Claim \ref{claim:about:g}
we conclude that
$g(i)-g(i+1)\in \langle y_i\rangle$
for all $i=0,\dots,n$.
Therefore,
$$
g(0)-g(n+1)
=
\sum_{i=0}^n
(g(i)-g(i+1))
=
\langle y_0\rangle
+
\langle y_1\rangle
+
\dots
+
\langle y_n\rangle
\subseteq  \langle Y\rangle.
$$
Combining this with \eqref{eq:x},
we get $x\in \langle Y\rangle$,
in contradiction with our assumption on $x$.
This contradiction shows that $H$ is not controllable.
\end{proof}

\begin{claim}
\label{constant:in:the:closure}
$c_x\in \overline{D_Y}$ for all $x\in\T$.
\end{claim}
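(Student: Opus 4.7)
The plan is to exhibit an explicit sequence $\{h_k\}_{k\in\N}$ in $D_Y$ converging to $c_x$ in the product topology of $\T^\N$. Since convergence in this topology is coordinatewise, I need only arrange that $h_k(n)\to x$ for each fixed $n\in\N$. My candidate is $h_k=m_k f_k$ for a suitably chosen nonnegative integer $m_k$; because $f_k\in F_Y$, each such $h_k$ automatically lies in $\langle F_Y\rangle=D_Y$, so no additional closure work is required on the group side.

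The real content of the argument is the choice of $m_k$, and the key observation is that the orders $s_k$ of the $y_k$ must satisfy $s_k\to\infty$. This should follow from the hypothesis that $y_k\to 0$ in $\T$ while $y_k\ne 0$ (since $Y\subseteq t(\T)\setminus\{0\}$): representing each $y_k$ as a reduced fraction modulo $1$ with denominator $s_k$, the distance from $y_k$ to $0$ in $\T$ is at least $1/s_k$, so $|y_k|\to 0$ forces $s_k\to\infty$. Once this is in hand, $\langle y_k\rangle$ is the unique cyclic subgroup of $\T$ of order $s_k$, namely $\{j/s_k:0\le j<s_k\}$, which is $(1/s_k)$-dense in $\T$. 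I can therefore pick $m_k\in\{0,1,\dots,s_k-1\}$ so that $m_k y_k$ lies within distance $1/s_k$ of $x$.

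To finish, I will verify the coordinatewise convergence $h_k\to c_x$. Fix $n\in\N$. For every $k\ge n$, the definition of $f_k$ gives $f_k(n)=y_k$, hence $h_k(n)=m_k y_k$, which by construction lies within $1/s_k$ of $x$ in $\T$. Since $s_k\to\infty$, this yields $h_k(n)\to x=c_x(n)$, as required, and the claim follows. The only delicate step in the plan is the justification that $s_k\to\infty$; the remainder is a direct unwinding of the definitions of $f_k$, $F_Y$, and $D_Y$.
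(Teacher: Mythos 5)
Your proposal is correct and follows essentially the same route as the paper: both approximate $c_x$ on any initial block of coordinates by a single element $m f_k$ with $k$ large, using the fact that $\langle y_k\rangle$ becomes dense in $\T$ because $y_k\to 0$ forces the order of $y_k$ to tend to infinity. The only difference is cosmetic (you phrase it as coordinatewise convergence of a sequence rather than via basic open neighbourhoods), and you in fact spell out the ``$s_k\to\infty$'' justification that the paper leaves implicit.
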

\begin{proof}
Fix $x\in \T$.
Let $O$ be an open neighbourhood of $c_x$ in $\T^\N$.
There exist $n\in\N$ and an open
neighbourhood $U$ of $x$ in $\T$ such that
\begin{equation}
\label{VV}
V=\{f\in\T^\N: f(i)\in U
\mbox{ for all }
i=0,\dots,n\}
\subseteq O.
\end{equation}
Since $\lim_{k\to\infty} y_k=0$, we can choose
$k\in\N$ such that $k\ge n$ and $\langle y_k\rangle\cap U\not=\emptyset$.
 Fix
$m\in\mathbb{Z}$
such that $m y_k\in U$.
By \eqref{eq:xn},
$m f_k(i)=m y_k\in U$ for every $i=0,\dots,n$.
Therefore,
$m f_k\in V\subseteq O$
by \eqref{VV}.
Since $m f_k\in D_Y$,
we have $O\cap D_Y\not=\emptyset$.
This shows that $c_x\in\overline{D_Y}$.
\end{proof}

\begin{claim}
\label{generating:the:subgropup}
If $X\subseteq \T$ and $X\setminus \langle Y \rangle\not=\emptyset$,
then $H=\langle F_Y\cup\{c_x:x\in X\}\rangle$ is a weakly controllable subgroup of
$\T^\N$ that is not controllable.
\end{claim}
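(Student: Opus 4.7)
The plan is to reduce the statement directly to Claim~\ref{weakly:controllable:not:controllable} by verifying its three hypotheses for the given $H$.

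First I would fix $x_0 \in X \setminus \langle Y\rangle$, which exists by assumption; this will serve as the element $x$ in Claim~\ref{weakly:controllable:not:controllable}. Clearly $c_{x_0}\in H$ by the very definition of $H$ as the subgroup generated by $F_Y\cup\{c_x:x\in X\}$, so the third hypothesis of Claim~\ref{weakly:controllable:not:controllable} is immediate.

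Next I would establish the two-sided inclusion $D_Y \subseteq H \subseteq \overline{D_Y}$. The left inclusion is trivial: $D_Y=\langle F_Y\rangle \subseteq \langle F_Y\cup\{c_x:x\in X\}\rangle = H$. For the right inclusion, note that $F_Y\subseteq D_Y\subseteq \overline{D_Y}$, and Claim~\ref{constant:in:the:closure} gives $c_x\in\overline{D_Y}$ for every $x\in X$ (indeed for every $x\in\T$). Since $\overline{D_Y}$ is a subgroup of $\T^\N$ (the closure of a subgroup of a topological group is again a subgroup), it contains the subgroup generated by $F_Y\cup\{c_x:x\in X\}$, namely $H$. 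Thus $H\subseteq\overline{D_Y}$.

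With all three hypotheses of Claim~\ref{weakly:controllable:not:controllable} verified for $x=x_0$ and the subgroup $H$, that claim delivers precisely the conclusion that $H$ is weakly controllable in $\T^\N$ but not controllable in $\T^\N$. No obstacle is expected; the only substantive observation is that $\overline{D_Y}$ is a subgroup, which is a standard fact about topological groups.
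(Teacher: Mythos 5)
Your proof is correct and follows essentially the same route as the paper: establish $D_Y\subseteq H\subseteq\overline{D_Y}$ using Claim~\ref{constant:in:the:closure} and the fact that $\overline{D_Y}$ is a subgroup, pick $x\in X\setminus\langle Y\rangle$, and invoke Claim~\ref{weakly:controllable:not:controllable}. Nothing is missing.
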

\begin{proof}
Clearly, $D_Y=\langle F_Y\rangle\subseteq H$.
Let us check the inclusion $H\subseteq \overline{D_Y}$.
Obviously, $F_Y\subseteq D_Y\subseteq \overline{D_Y}$.
Furthermore,
$\{c_x:x\in X\}\subseteq \overline{D_Y}$ by Claim \ref{constant:in:the:closure}.
Since $D_Y$ is a subgroup of $\T^\N$, so is its closure $\overline{D_Y}$.
Therefore, $H=\langle F_Y\cup\{c_x:x\in X\}\rangle\subseteq \overline{D_Y}$.
Since $X\setminus \langle Y \rangle\not=\emptyset$, there exists
$x\in X\setminus \langle Y \rangle$. Since $c_x\in H$,
Claim \ref{weakly:controllable:not:controllable}
can be applied.
\end{proof}

\begin{example}
\label{compact:subgroup}
Let $Y=\{y_k:k\in\N\}\subseteq t(\T)$ be any sequence
converging to $0$ in $\T$.
Then {\em $H=\overline{D_Y}$ is a closed subgroup
 of $\T^\N$ which is weakly controllable but not controllable\/}.
Indeed, since the set $\langle Y\rangle$ is countable
and $\T$ is uncountable,
we can
fix $x\in \T\setminus \langle Y\rangle$.
By Claim \ref{constant:in:the:closure},
$c_x\in \overline{D_Y}=H$, and
Claim \ref{weakly:controllable:not:controllable} can be applied.
\end{example}

\begin{example}
\label{torsion:subgroup}
Let $\Prm\setminus\{2\}=\{p_k:k\in\N\}$ be a faithful enumeration of all
prime numbers other than $2$.
For every $k\in\N$ let $y_k\in\T$ be one of the two
elements of order $p_k$
which are closest to $0$ (in the natural metric of $\T$).
Then $Y=\{y_k:k\in\N\}\subseteq t(\T)$ is a sequence converging to $0$ in $\T$.
Let $x\in\T$ be the element of order $2$. Then
\emph{$H=\langle F_Y\cup \{c_x\}\rangle$ is a countable torsion subgroup of $\T^\N$ that is weakly controllable but not controllable\/}.
Clearly, $H$ is countable.
Since $x$ has order $2$, the constant function $c_x\in\T^\N$
also has order $2$; in particular, $c_x\in t(\T^\N)$.
Combining this with \eqref{FY:DY}, we conclude that
$H=\langle F_Y\cup \{c_x\}\rangle\subseteq t(\T^\N)$;
that is, $H$ is torsion.
It remains only to show that
$X=\{x\}$ and $H$ satisfy the assumption of
Claim \ref{generating:the:subgropup}.
Observe that every non-zero element of the group
$\langle Y\rangle=\bigoplus_{k\in\N} \langle y_k\rangle$ has order
$p_1 p_2 \dots p_i$ for suitable $p_1, p_2, \dots, p_i\in\Prm\setminus\{2\}$,
so
$\langle Y\rangle\setminus\{0\}$ has only elements of odd order.
Since $x$ has an even order $2$, we have $x\in\T\setminus \langle Y\rangle$.
Thus,
$x\in \{x\}\setminus \langle Y\rangle=X\setminus \langle Y\rangle\not=\emptyset$.
\end{example}

\begin{theorem}
\label{many:weakly:controllable:not:controllable;subgroups}
Let $\mathscr{H}$ be the family of subgroups $H$ of $\T^\N$ which are
weakly controllable but not controllable. Then $|\mathscr{H}|=2^\cont$.
\end{theorem}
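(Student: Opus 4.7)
The upper bound $|\mathscr{H}|\le 2^\cont$ is immediate, since $|\T^\N|=\cont^{\aleph_0}=\cont$ and $\mathscr{H}\subseteq\mathcal{P}(\T^\N)$. So the whole task lies in producing $2^\cont$ pairwise distinct members of $\mathscr{H}$, and my plan is to parameterize such a family by nonempty subsets of a suitable $\Z$-free subset of $\T$ of full cardinality, and then invoke Claim \ref{generating:the:subgropup}.

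First I would fix any sequence $Y=\{y_k:k\in\N\}\subseteq t(\T)$ converging to $0$ in $\T$ (for instance, the one from Example \ref{torsion:subgroup}); then $\langle Y\rangle$ is a countable subgroup of $\T$. Since $\T$ has torsion-free rank $\cont$ --- its quotient by the torsion part $\mathbb{Q}/\mathbb{Z}$ is the $\mathbb{Q}$-vector space $\mathbb{R}/\mathbb{Q}$ of dimension $\cont$ --- there exists a $\Z$-linearly independent (free) subset $B\subseteq\T$ with $|B|=\cont$; after deleting the at most countably many elements of $B\cap\langle Y\rangle$, we may assume $B\cap\langle Y\rangle=\emptyset$. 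For every non-empty $X\subseteq B$, I would set $H_X=\langle F_Y\cup\{c_x:x\in X\}\rangle$. Since $X\setminus\langle Y\rangle=X\ne\emptyset$, Claim \ref{generating:the:subgropup} immediately gives $H_X\in\mathscr{H}$, producing $2^\cont$ candidates.

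The crux, and the step I expect to be the main obstacle, is showing that $X\mapsto H_X$ is injective. My plan here is to pass to the algebraic quotient via the canonical homomorphism $\pi:\T^\N\to\T^\N/\bigoplus_{n\in\N}\T$. By \eqref{eq:xn} and \eqref{FY:DY}, every $f_k$ lies in $\bigoplus_{n\in\N}\T$, so $\pi$ annihilates all of $F_Y$; on the other hand, the auxiliary map $\rho:\T\to\T^\N/\bigoplus_{n\in\N}\T$ defined by $\rho(t)=\pi(c_t)$ is an \emph{injective} group homomorphism, because $c_t\in\bigoplus_{n\in\N}\T$ forces $t=0$. A generic element of $H_X$ has the shape $\sum_k m_k f_k + c_y$ with $y=\sum_{x\in F}n_x x\in\langle X\rangle$ (the finitely many $n_x c_x$ terms collapsing into a single constant function), so $\pi(H_X)=\rho(\langle X\rangle)$. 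Consequently $H_X$ uniquely determines the subgroup $\langle X\rangle\le\T$, and since $B$ is $\Z$-free, $X=\langle X\rangle\cap B$, so $\langle X\rangle$ determines $X$ back. Thus $X\mapsto H_X$ is injective on nonempty subsets of $B$, giving $|\mathscr{H}|\ge 2^{|B|}=2^\cont$ and completing the proof.
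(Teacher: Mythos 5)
Your proof is correct, and the construction is the same as the paper's: both take the family $H_X=\langle F_Y\cup\{c_x:x\in X\}\rangle$ indexed by non-empty subsets $X$ of a $\cont$-sized independent subset of $\T$, and both obtain $H_X\in\mathscr{H}$ from Claim \ref{generating:the:subgropup}. Where you diverge is in the injectivity of $X\mapsto H_X$, which you correctly identify as the crux. The paper's Claim \ref{distinct} works inside $\T^\N$: it uses that $\langle F_Y\rangle$ is torsion while $\langle\{c_x:x\in Z\}\rangle$ is torsion-free (because $Z\subseteq\T\setminus t(\T)$ is independent), so a relation $c_{x^*}=g+h$ forces $g=0$ and then contradicts the independence of the constant functions. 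You instead pass to the quotient map $\pi:\T^\N\to\T^\N/\bigoplus_{n\in\N}\T$, which annihilates $F_Y$ for the simple reason that each $f_k$ has finite support, and you observe that $t\mapsto\pi(c_t)$ embeds $\T$ into the quotient; hence $H_X$ determines $\langle X\rangle=\pi(H_X)$ pulled back along this embedding, and $\Z$-freeness of $B$ recovers $X=\langle X\rangle\cap B$. Both arguments are sound; yours is marginally more robust, since it separates the constants from $F_Y$ by supports rather than by torsion and would therefore survive replacing $Y$ by non-torsion elements, whereas the paper's argument leans on $F_Y\subseteq t(\T^\N)$. Two harmless redundancies in your write-up: a $\Z$-free subset of $\T$ automatically consists of elements of infinite order, so $B\cap\langle Y\rangle\subseteq B\cap t(\T)=\emptyset$ and your deletion step is vacuous; and your free set $B$ is exactly the paper's ``independent set $Z\subseteq\T\setminus t(\T)$'', so the two parameterizations coincide.
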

\begin{proof}
Since $|\T^\N|=\cont$, we have $|\mathscr{H}|\le 2^\cont$.
It remains only to show that $|\mathscr{H}|\ge 2^\cont$.

Let
$Y=\{y_k:k\in\N\}\subseteq t(\T)\setminus\{0\}$ be any sequence
converging to $0$ in $\T$. Since $\T$ has rank $\cont$, we can
fix an independent set $Z\subseteq \T\setminus t(\T)$ with $|Z|=\cont$.
Then the family $\mathscr{X}=\{X\subseteq Z: X\not=\emptyset\}$
of all non-empty subsets of $Z$
has cardinality $2^\cont$.
Since $Z$ is an independent subset of $\T$, the set $C=\{c_x:x\in Z\}$ is an independent
subset of $\T^\N$.

For every $X\in\mathscr{X}$, define
$H_X=\langle F_Y\cup\{c_x:x\in X\}\rangle$.

\begin{claim}
\label{distinct}
If $X_0,X_1\in\mathscr{X}$ and $X_0\not= X_1$, then $H_{X_0}\not=H_{X_1}$.
\end{claim}
\begin{proof}
There exist $i\in\{0,1\}$ and $x^*\in X_i\setminus X_{1-i}$.
Clearly, $c_{x^*}\in H_{X_i}$, so it remains only to show that
$c_{x^*}\not\in H_{X_{1-i}}$. Suppose that
$c_{x^*}\in H_{X_{1-i}}=\langle F_Y\cup\{c_x:x\in X_{1-i}\}\rangle$.
Then $c_{x^*}=g+h$ for some $g\in \langle F_Y\rangle$ and
$h\in \langle\{c_x:x\in X_{1-i}\}\rangle$.
Note that $X_0\cup X_1\subseteq Z$, so
$g=c_{x^*}-h\in\langle C\rangle$.
Since $g\in \langle F_Y\rangle$, we have $g\in t(\T^\N)$ by
\eqref{FY:DY}.
Thus, $g\in \langle C\rangle\cap t(\T^\N)=\{0\}$ (as $C$ is independent).
This gives $g=0$ and $c_{x^*}=h\in\langle\{c_x:x\in X_{1-i}\}\rangle$.
Since $x^*\not\in X_{1-i}$, $c_{x^*}\not\in \{c_x:x\in X_{1-i}\}$.
This contradicts the fact that $C$ is independent.
\end{proof}

\begin{claim}
\label{weakly:contr:not:contr}
For every $X\in\mathscr{X}$, the subgroup $H_X$ of $\T^\N$ is weakly controllable but not controllable.
\end{claim}
\begin{proof}
Since $X\not=\emptyset$, we can fix $x\in X$.
Since $X\subseteq Z$ and $Z$ is independent in $\T$, $x$ has infinite order in $\T$.
Since $Y\subseteq t(\T)$, we have $\langle Y\rangle \subseteq t(\T)$.
This yields $x\in X\setminus \langle Y\rangle$. Now the conclusion follows from
Claim \ref{generating:the:subgropup}.
\end{proof}

From Claims \ref{distinct} and \ref{weakly:contr:not:contr}, we obtain
the inequality $|\mathscr{H}|\ge |\{H_X:X\in \mathscr{X}\}|=|\mathscr{X}|=2^\cont$.
\end{proof}

\section{Behavior of controllability under taking closures and dense subgroups}
\label{sec:4}

In this section we investigate how the three notions of controllability behave under the closure operation.
\begin{proposition}
Let $I$ be a set and $\{G_i:i\in I\}$ be a family of topological groups.
Let $H$ and $H'$ be subgroups of $G=\prod_{i\in I} G_i$ such that
$H\subseteq H'\subseteq \overline{H}$. If $H$ is weakly controllable in $G$, then so is $H'$.
\end{proposition}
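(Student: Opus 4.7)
The plan is to unfold the definitions and exploit transitivity of closure. Recall that $H$ weakly controllable in $G$ means exactly that $H \cap \bigoplus_{i\in I} G_i$ is dense in $H$, i.e.\ $H \subseteq \overline{H \cap \bigoplus_{i\in I} G_i}$. The goal is to show the analogous inclusion $H' \subseteq \overline{H' \cap \bigoplus_{i\in I} G_i}$.

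First I would use monotonicity: since $H \subseteq H'$, one has
$$
H \cap \bigoplus_{i\in I} G_i \ \subseteq \ H' \cap \bigoplus_{i\in I} G_i,
$$
and therefore
$$
\overline{H \cap \bigoplus_{i\in I} G_i} \ \subseteq \ \overline{H' \cap \bigoplus_{i\in I} G_i}.
$$
By the weak controllability of $H$ the left-hand side contains $H$, so $H \subseteq \overline{H' \cap \bigoplus_{i\in I} G_i}$.

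Next I would take one more closure. Since $\overline{H' \cap \bigoplus_{i\in I} G_i}$ is a closed set containing $H$, it also contains $\overline{H}$, and hence contains $H'$ because $H' \subseteq \overline{H}$ by hypothesis. Thus $H' \subseteq \overline{H' \cap \bigoplus_{i\in I} G_i}$, which is exactly the statement that $H' \cap \bigoplus_{i\in I} G_i$ is dense in $H'$; that is, $H'$ is weakly controllable in $G$.

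There is essentially no obstacle: the argument is a two-line chase using only monotonicity of closure and the assumption $H' \subseteq \overline{H}$. No use of the group structure on $G$ is needed beyond the fact that $\bigoplus_{i\in I} G_i$ is a fixed subset, and in particular the result holds verbatim for topological groups without any extra hypothesis on the $G_i$.
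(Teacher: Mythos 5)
Your argument is correct, and it is the standard closure--monotonicity chase: the paper actually states this proposition without proof, evidently regarding it as routine, and your two-step argument ($H\subseteq\overline{H\cap\bigoplus_{i\in I}G_i}\subseteq\overline{H'\cap\bigoplus_{i\in I}G_i}$, then pass to $\overline{H}\supseteq H'$ since the right-hand side is closed) is exactly the intended verification. Nothing is missing.
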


\begin{corollary}
The closure of a weakly controllable subgroup is weakly controllable.
\end{corollary}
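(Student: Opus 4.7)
The corollary is an essentially immediate consequence of the preceding proposition, so the plan is just to note that $\overline{H}$ fits the hypotheses of that proposition when $H$ is weakly controllable.

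First I would recall the standard fact that, in any topological group $G$, the closure $\overline{H}$ of a subgroup $H$ is again a subgroup of $G$: this follows from continuity of the group operations together with the fact that $H \cdot H \subseteq H$ and $H^{-1} \subseteq H$ pass to the closure. Thus, for a weakly controllable subgroup $H$ of $G = \prod_{i \in I} G_i$, the closure $\overline{H}$ is itself a subgroup of $G$, and trivially $H \subseteq \overline{H} \subseteq \overline{H}$.

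Then I would simply invoke the preceding proposition with $H' = \overline{H}$ to conclude that $\overline{H}$ is weakly controllable in $G$. For completeness of exposition one might sketch why the proposition itself holds: since $H$ is weakly controllable, $H \cap \bigoplus_{i \in I} G_i$ is dense in $H$; and since $H$ is dense in $H'$ (because $H' \subseteq \overline{H}$), transitivity of density gives that $H \cap \bigoplus_{i \in I} G_i$ is dense in $H'$, whence the larger set $H' \cap \bigoplus_{i \in I} G_i$ is also dense in $H'$.

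There is no real obstacle here; the only subtlety worth mentioning is that $\overline{H}$ be a subgroup, which requires no separation axiom beyond what is implicit in the paper's standing conventions on topological groups. The whole argument is two or three lines long.
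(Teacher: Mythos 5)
Your proof is correct and matches the paper's intended argument: the corollary is obtained by applying the preceding proposition with $H'=\overline{H}$ (using that the closure of a subgroup is a subgroup), and your sketch of the proposition via transitivity of density is sound. Nothing further is needed.
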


Our next example shows that the situation changes completely in the case of controllability.

\begin{example}
\label{the:closure:of:controllable:subgroup:need:not:be:controllable}
{\em The closure of a (countable torsion) controllable
subgroup of $\T^\N$
need not be controllable\/}.
Indeed,
let $H=\overline{D_Y}$ be the closed non-controllable subgroup of $\T^\N$ constructed in Example
\ref{compact:subgroup}.
Since
$D_Y\subseteq \bigoplus_{n\in\N} t(\T)\subseteq \bigoplus_{n\in\N} \T$,
it follows from Proposition \ref{examples:of:controllability}(ii)
that $D_Y$ is controllable.
\end{example}

Since the group $H$ in the above example is compact, this example shows that ``uniformly'' cannot be omitted either from our next theorem or from its Corollary \ref{closures:of:uniformally:controllable:subgroups:in:products:of:compact:groups} below.

\begin{theorem}
\label{the:compact:closure:of:uniformly:controllable:subgroup:is:uniformly:controllable}
Let $I$ be a set and $\{G_i:i\in I\}$ be a family of topological groups.
Furthermore, let $H$ be a subgroup of $G=\prod_{i\in I} G_i$ such that
$\overline{H}$ is compact. If $H$ is uniformly controllable, then so is
$\overline{H}$.
\end{theorem}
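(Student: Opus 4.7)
The plan is to show that the same finite set $K$ that witnesses uniform controllability of $H$ also witnesses uniform controllability of $\overline{H}$. Fix a finite $J\subseteq I$, and by uniform controllability of $H$ (together with Proposition \ref{reformulation:of:definition}(ii)) choose a finite $K\subseteq I$ such that every $h'\in H$ agrees on $J$ with some element of $H\cap\bigoplus_{i\in K} G_i$. The claim is that this same $K$ works for $\overline{H}$: every $h\in\overline{H}$ agrees on $J$ with some element of $\overline{H}\cap\bigoplus_{i\in K} G_i$; by Proposition \ref{reformulation:of:definition}(ii) this is all that is needed.

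To verify the claim, fix $h\in\overline{H}$ and a net $(h_\alpha)$ in $H$ with $h_\alpha\to h$. For each $\alpha$ use the choice of $K$ to pick $g_\alpha\in H\cap\bigoplus_{i\in K} G_i$ with $g_\alpha\restriction_J = h_\alpha\restriction_J$. Since $g_\alpha\in H\subseteq\overline{H}$ and $\overline{H}$ is compact, the net $(g_\alpha)$ has a subnet $(g_{\alpha_\beta})$ converging to some $g\in\overline{H}$. Because $K$ is finite, the set $\bigoplus_{i\in K} G_i=\{f\in G: f(i)=1\text{ for all }i\in I\setminus K\}$ is closed in $G$ (being an intersection, over $i\in I\setminus K$, of preimages of the closed point $1\in G_i$ under the continuous coordinate projections), so the limit $g$ lies in $\bigoplus_{i\in K} G_i$. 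Continuity of $p_{IJ}$ yields
$$
g\restriction_J \;=\; \lim_\beta g_{\alpha_\beta}\restriction_J \;=\; \lim_\beta h_{\alpha_\beta}\restriction_J \;=\; h\restriction_J,
$$
so $g\in\overline{H}\cap\bigoplus_{i\in K} G_i$ is the desired element.

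The only nontrivial step is the subnet extraction, and this is precisely where compactness of $\overline{H}$ is indispensable: without it there would be no way to pass from the witnesses $g_\alpha\in H\cap\bigoplus_{i\in K} G_i$ to an element of $\overline{H}$ that still lies in the closed set $\bigoplus_{i\in K} G_i$. Note that both hypotheses of the theorem are sharp: Example \ref{the:closure:of:controllable:subgroup:need:not:be:controllable} exhibits a (compact) closure of a merely controllable subgroup that fails to be controllable, showing that ``uniformly'' cannot be weakened to ``controllable'' in the hypothesis, and it is exactly the uniformity of $K$ in $h'\in H$ that allows the subnet $(g_{\alpha_\beta})$ to be trapped in one fixed closed subgroup of $G$.
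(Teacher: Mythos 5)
Your proof is correct and is essentially the paper's argument: both use the very same finite set $K$ witnessing uniform controllability of $H$, the closedness of $\bigoplus_{i\in K} G_i$ for finite $K$, and compactness of $\overline{H}$ to transfer the witnesses from $H$ to $\overline{H}$. The paper phrases the compactness step globally (the image $p_{IJ}\left(\overline{H}\cap\bigoplus_{i\in K} G_i\right)$ is compact, hence closed, and contains the dense subset $p_{IJ}(H)$ of $p_{IJ}(\overline{H})$), whereas you unwind it pointwise via a convergent subnet of witnesses; these are two renderings of the same idea.
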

\begin{proof}
Let $J$ be a finite subset of $I$. Since $H$ is uniformly controllable,
there exists a finite set $K\subseteq I$ such that
\begin{equation}
\label{eq:image:of:finite:sum}
p_{IJ}(H)=p_{IJ}\left(H\cap \bigoplus_{i\in K} G_i\right)
\subseteq
p_{IJ}\left(\overline{H}\cap \bigoplus_{i\in K} G_i\right).
\end{equation}
Since $K$ is finite,
$\bigoplus_{i\in K} G_i$ is a closed subset of $G$, and so
$\overline{H}\cap \bigoplus_{i\in K} G_i$ is a closed subset of $\overline{H}$.
Since the latter set is compact, so is the former.
As a continuous image of the compact set $\overline{H}\cap \bigoplus_{i\in K} G_i$, the last set in \eqref{eq:image:of:finite:sum} is compact, and so it is
closed in
$\prod_{i\in J} G_i$.
Combining this with the inclusion from \eqref{eq:image:of:finite:sum},
we get
$\overline{p_{IJ}(H)}\subseteq p_{IJ}\left(\overline{H}\cap \bigoplus_{i\in K} G_i\right)$.
Since the map $p_{IJ}$ is continuous, we also have
$p_{IJ}(\overline{H})\subseteq \overline{p_{IJ}(H)}$.
This yields
$p_{IJ}(\overline{H})\subseteq p_{IJ}\left(\overline{H}\cap \bigoplus_{i\in K} G_i\right)$.
The inverse inclusion is obvious.
This finishes the proof of uniform controllability of $\overline{H}$ in $G$.
\end{proof}

\begin{corollary}
\label{closures:of:uniformally:controllable:subgroups:in:products:of:compact:groups}
Let $\{G_i:i\in I\}$ be a family of compact groups and $H$ be a uniformly
controllable subgroup of $G=\prod_{i\in I} G_i$. Then $\overline{H}$
is also uniformly controllable in $G$.
\end{corollary}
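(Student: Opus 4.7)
The plan is to derive this corollary directly from Theorem \ref{the:compact:closure:of:uniformly:controllable:subgroup:is:uniformly:controllable}, which we may invoke. The hypothesis of that theorem requires $\overline{H}$ to be compact and $H$ to be uniformly controllable. We already have the second assumption, so the only thing to verify is the first.

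The key observation is that all the factor groups $G_i$ are compact by assumption, so by Tychonoff's theorem the Tychonoff product $G=\prod_{i\in I} G_i$ is compact. A closed subset of a compact (Hausdorff) space is compact, and $\overline{H}$ is by definition a closed subset of $G$; hence $\overline{H}$ is compact. With this observation in place, the hypotheses of Theorem \ref{the:compact:closure:of:uniformly:controllable:subgroup:is:uniformly:controllable} are satisfied, and applying that theorem yields that $\overline{H}$ is uniformly controllable in $G$.

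There is essentially no obstacle here; the whole argument is a one-line application of Tychonoff's theorem combined with the previous theorem. The only thing worth noting is that no additional separation or Hausdorffness hypothesis is needed beyond what is already implicit in the notion of topological group considered in the paper, since topological groups are customarily taken to be Hausdorff (and in any case ``closed subset of compact equals compact'' is valid in full generality).
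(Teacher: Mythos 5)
Your proof is correct and is exactly the intended argument: the paper states this corollary immediately after Theorem \ref{the:compact:closure:of:uniformly:controllable:subgroup:is:uniformly:controllable} with no separate proof, the implicit justification being precisely that $G$ is compact by Tychonoff's theorem, hence its closed subset $\overline{H}$ is compact, so the theorem applies.
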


\begin{remark}
\label{controllable:not:uniformly:controllable}
Let $D_Y$ be the controllable subgroup of $\T^\N$ mentioned in
Example \ref{the:closure:of:controllable:subgroup:need:not:be:controllable}.
Since its closure $H=\overline{D_Y}$ in $\T^\N$ is not controllable,
it is not uniformly controllable either. Therefore, $D_Y$ is not uniformly controllable by
Corollary \ref{closures:of:uniformally:controllable:subgroups:in:products:of:compact:groups}.
\end{remark}

Items (ii) and (iii) of our next example show that
a dense subgroup of a uniformly controllable group need not be even weakly controllable, which shows that controllability is badly destroyed by passing to a dense subgroup.

\begin{example}
Let $K$ be a non-trivial abelian topological group.

\smallskip
(i) {\em There exists a dense subgroup $H$ of $K^\N$
such that $H\cap \bigoplus_{n\in\N} K=\{0\}$ and
$|H|\le
\max\{w(K),\omega\}$, where $w(K)$ is the weight of $K$.\/}
Indeed, let $D$ be a dense subgroup of $K$ such that $|D|=w(K)$.
Fix a partition $\N=\bigcup_{k\in\N} I_k$ of $\N$ into pairwise disjoint infinite
sets
$I_k$. For each $k\in\N$ define
\begin{equation}
\label{S_k}
S_k=I_k \setminus\{0,1,\dots,k\}.
\end{equation}
For $d\in D$ and $k\in\N$ we define $x_{d,k}\in K^\N$ by letting
\begin{equation}
\label{def:x}
x_{d,k}(n)=
\left\{ \begin{array}{ll}
d, & \mbox{ if } n \in S_k\cup\{k\} \\
0, & \mbox{ if } n\in \N\setminus(S_k\cup\{k\})
\end{array} \right.
\hskip50pt
\mbox{for }
n\in\N.
\end{equation}
Since $D$ is a subgroup of $K$, it follows from
\eqref{def:x} that
$X_k=\{x_{d,k}:d\in D\}$ is a subgroup of $K^\N$ for every $k\in\N$.
We claim that
\begin{equation}
\label{def:H}
H=\langle \bigcup_{k\in\N} X_k\rangle
\end{equation}
is the desired subgroup of $K^\N$.
Clearly,
$|X_k|=|D|$ for every $k\in \N$, so
$
|H|\le
\max\{|D|,\omega\}=\max\{w(K),\omega\}.
$

Next, let us check
that
$H\cap \bigoplus _{n\in\N} K=\{0\}$.
Choose $h\in H\setminus\{0\}$ arbitrarily.
Since each $X_k$ is a subgroup of $K^\N$,
it easily follows from \eqref{def:H} that
$h=\sum_{i=0}^j x_{d_i, k_i}$
for suitable
$d_i$ and $k_i$ ($i=0,\dots,j$)
such that
$k_0,k_1,\dots,k_j$ are pairwise distinct.
Since $h\not=0$, there exists $l=0,\dots,j$ such that
$x_{d_l, k_l}\not=0$.
From this and \eqref{def:x} we conclude that
$d_l\not=0$.
Since $S_l$ is infinite, the set $S=S_l\setminus\{k_i: i=0,\dots, j\}$
is also infinite.

Let $n\in S$ be arbitrary.
Since $S_i\cap S_l=\emptyset$ for all $i=0,1,\dots, j$ with
$i\not=l$, it follows from
\eqref{def:x} and our definition of $S$ that
$x_{d_i, k_i}(n)=0$ whenever $i=0,1,\dots, j$ and $i\not=l$, so
$h(n)=x_{d_l, k_l}(n)=d_l\not=0$.
Since $S$ is infinite, this means that $h\not\in \bigoplus _{n\in\N} K$.
This finishes the proof of the equality
$H\cap \bigoplus _{n\in\N} K=\{0\}$.

It remains only to show that $H$ is dense in $K^\N$.
Fix an arbitrary non-empty open subset $W$ of $K^\N$. Then there exist $k\in\N$ and non-empty open subsets $U_0,\dots, U_k$ of $K$ such that
\begin{equation}
\label{basis:subproduct}
\left(\prod_{i=0}^k U_i\right)\times \prod_{j>k} K
\subseteq W.
\end{equation}
By (finite) induction on $j=0,1,\dots,k$, we select $d_j\in D$ as follows.
Since $D$ is dense in $K$,
we
fix
$d_0\in D\cap U_0$.
Using density of $D$ in $K$ again, we select
$d_1\in D\cap (U_1-x_{d_0,0}(1))$.
Assuming that $d_0,\dots,d_{j-1}$ were already defined, we use density of $D$ in $K$ again to choose
\begin{equation}
\label{d_j}
d_j\in D\cap \left(U_j-\sum_{i=0}^{j-1} x_{d_{i},i}(j)\right).
\end{equation}
Note that
\begin{equation}
\label{def:h}
h=\sum_{i=0}^k x_{d_i, i}\in H.
\end{equation}

Fix $j\in\{0,1,\dots,k\}$.
If $i\in\{j+1,\dots,k\}$, then
$x_{d_i,i}(j)=0$ by \eqref{S_k} and
\eqref{def:x}.
Combining this with \eqref{d_j} and \eqref{def:h},
we obtain
\begin{equation}
\label{h(j)}
h(j)=\sum_{i=0}^k x_{d_i, i}(j)
=
\sum_{i=0}^j x_{d_i, i}(j)
=
\left(\sum_{i=0}^{j-1} x_{d_i, i}(j)\right)
+
x_{d_j,j}(j)
=
\left(\sum_{i=0}^{j-1} x_{d_i, i}(j)\right)
+d_j
\in U_j.
\end{equation}
Since
\eqref{h(j)} holds for every
$j=0,1,\dots,k$,
from
\eqref{basis:subproduct}
we conclude that
$h\in W$.
From this and \eqref{def:h}, we get
$h\in W\cap H\not=\emptyset$.
Therefore, $H$ is dense in $K^\N$.

\smallskip
(ii) Let $H$ be the subgroup of $K^\N$ constructed in item (i).
Clearly, $H$ is not weakly controllable, while $K^\N$ is uniformly controllable
by Proposition \ref{examples:of:controllability}(i). This shows that {\em a dense subgroup of a uniformly controllable group need not be even weakly controllable.\/}

\smallskip
(iii) If one takes $K$ to be finite, then the subgroup $H$ of $K^\N$ constructed
in item (i) becomes countable. This allows us to conclude that
{\em a countable dense subgroup of a compact metric uniformly controllable group need not be even weakly controllable.\/}
\end{example}

\section{Connections with coding theory}
\label{coding:section}

Let $\Z$ be the set of integer numbers.
For $n\in\Z$, we let $n^-=\{k\in\Z: k\le n\}$ and $n^+=\{k\in\Z: k\ge n\}$.

Let us recall the classical notions from coding theory; see
\cite{trott:thesis,forney_trott:ie3trans93,fagnani:adv97,forney_trott:04}.
\begin{definition}
\label{classical:definition}
Assume that $I=\N$ or $I=\Z$.
Let
$\{G_i:i\in I\}$ be a family of topological groups
and $G=\prod_{i\in I} G_i$.
A subgroup $H$ of
$G$
is called:
\begin{itemize}
\item[(i)] {\em weakly controllable\/} in $G$
provided that $H\cap \bigoplus_{i\in I} G_i$ is dense in $H$;
\item[(ii)] {\em controllable\/}  in $G$ provided that for each pair
$h,h'$ of elements of $H$ and every integer $n\in I$ there exist an
integer $m\in I$ and an element $g\in H$ such that
$g\restriction_{n^-\cap I}=h\restriction_{n^-\cap I}$ and
$g\restriction_{m^+\cap I}=h'\restriction_{m^+\cap I}$;
\item[(iii)] {\em $k$-controllable\/} in $G$, for a fixed $k\in\N$,
provided that item (ii) holds with $m=n+k$;
\item[(iv)] {\em  strongly controllable\/}  in $G$
provided that $H$ is $k$-controllable for some $k\in\N$.
\end{itemize}
\end{definition}

Note that the topology of $G_i$, if any, is only used in item (i) and is irrelevant in
items (ii)--(iv).

These four notions play a prominent role in coding theory.
According to Forney and Trott \cite{forney_trott:04}, a \emph{group code} is a set of sequences
that has  a group property under a component-wise group operation. In this general setting,
a group code may also be seen as the behavior of a behavioral group system as given by Willens
\cite{willems:86,willems:97}.
It is known that many of the fundamental properties of linear codes and systems
depend only on their group structure. In fact, Forney and Trott, loc. cit., obtain purely
algebraic proofs of many of their results.
These notions are used in the study of
convolutional codes
that are well known and used currently in data transmission
(cf. \cite{forney_trott:ie3trans93}).

\begin{definition}
\label{uniform]controllability:in:Z}
Assume that $I=\N$ or $I=\Z$.
Let
$\{G_i:i\in I\}$ be a family of groups
and $G=\prod_{i\in I} G_i$.
We say that a subgroup $H$ of $G$ is
{\em uniformly controllable\/} in $G$
provided that for every integer $n\in I$
there exists an
integer $m\in I$
such that
for each pair
$h,h'$ of elements of $H$ one can find
an element $g\in H$ satisfying
$g\restriction_{n^-\cap I}=h\restriction_{n^-\cap I}$ and
$g\restriction_{m^+\cap I}=h'\restriction_{m^+\cap I}$.
\end{definition}

One can easily see that
\begin{equation}
\label{strongly:controllable:implications}
\mbox{strongly controllable}
\
\to
\
\mbox{uniformly controllable}
\
\to
\
\mbox{controllable}.
\end{equation}

Clearly, our notion of weak controllability in
Definition \ref{def:weak:controllability} is a direct
generalization
of the classical notion of weak controllability from Definition \ref{classical:definition}(i).
The next proposition shows that our notion of controllability in
Definition \ref{def:controllability}(i) is equivalent to the classical
notion of controllability from Definition \ref{classical:definition}(ii)
when
$I=\N$.

\begin{proposition}
\label{two:versions:of:controllability}
For every family
$\{G_i:i\in \N\}$ of groups,
a subgroup $H$ of
$G=\prod_{i\in \N} G_i$
is controllable in $G$ in the sense of
Definition \ref{def:controllability}(i)
if and only if $H$ is controllable in $G$ in the sense of
Definition \ref{classical:definition}(ii).
\end{proposition}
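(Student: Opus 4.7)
The plan is to reduce both directions to the convenient reformulation of Definition \ref{def:controllability}(i) supplied by Proposition \ref{reformulation:of:definition}(i): namely, that $H$ is controllable in the sense of Definition \ref{def:controllability}(i) if and only if for every $h\in H$ and every finite $J\subseteq\N$ there exists $g\in H\cap\bigoplus_{i\in\N} G_i$ with $g\restriction_J=h\restriction_J$. Unpacking the classical definition, since $I=\N$ we have $n^-\cap\N=\{0,1,\dots,n\}$ and $m^+\cap\N=\{m,m+1,\dots\}$, so Definition \ref{classical:definition}(ii) amounts to splicing an initial segment of one element of $H$ with a terminal tail of another via a third element of $H$.

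For the direction from Definition \ref{classical:definition}(ii) to Definition \ref{def:controllability}(i), I would take $h\in H$ and a finite $J\subseteq\N$, set $n=\max J$, and apply classical controllability to the pair $(h, 1)\in H\times H$ at the integer $n$, where $1$ denotes the identity element of $G$. This yields $m\in\N$ and $g\in H$ with $g\restriction_{\{0,\dots,n\}}=h\restriction_{\{0,\dots,n\}}$ and $g(i)=1$ for all $i\ge m$. Then $g\in H\cap\bigoplus_{i\in\N} G_i$ and $g\restriction_J=h\restriction_J$ because $J\subseteq\{0,\dots,n\}$, so the reformulated condition holds.

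For the reverse direction, I would take $h,h'\in H$ and $n\in\N$, set $J=\{0,1,\dots,n\}$, and apply the reformulation to the element $k=(h')^{-1}h\in H$ to obtain $q\in H\cap\bigoplus_{i\in\N} G_i$ with $q\restriction_J=k\restriction_J$. Choose $m\in\N$ such that $q(i)=1$ for all $i\ge m$, which exists because $q$ lies in the direct sum. Setting $g=h'q\in H$, for $i\in J$ one computes $g(i)=h'(i)q(i)=h'(i)(h'(i))^{-1}h(i)=h(i)$, and for $i\ge m$ one has $g(i)=h'(i)\cdot 1=h'(i)$. This verifies Definition \ref{classical:definition}(ii) with this choice of $m$ and $g$.

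There is essentially no obstacle here; the only point requiring a bit of care is that the $G_i$ need not be abelian, so one must pick the correct sides when forming $k=(h')^{-1}h$ and recombining via $g=h'q$. Everything else is a direct translation between the finite-set formulation and the initial-segment/tail formulation of controllability.
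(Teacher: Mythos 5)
Your proof is correct and follows essentially the same route as the paper's: both directions reduce to Proposition \ref{reformulation:of:definition}(i), using the pair $(h,1)$ for one direction and a quotient element plus recombination for the other. The only cosmetic difference is that you form $(h')^{-1}h$ and recombine as $h'q$ while the paper uses $h(h')^{-1}$ and $g^*h'$; both handle the non-abelian case equally well.
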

\begin{proof}
Assume that $H$ is controllable in $G$ in the sense of
Definition \ref{def:controllability}(i). To check that
$H$ is also controllable in $G$ in the sense of
Definition \ref{classical:definition}(ii),
fix $h,h'\in H$ and
$n\in\N$.
Since $H$ is a subgroup of $G$, we have $h^*=h(h')^{-1}\in H$.
Clearly,
the set
$J=n^-\cap \N$
is finite.
By Proposition \ref{reformulation:of:definition}(i),
there exists $g^*\in H\cap \bigoplus_{i\in \N} N$ such that
$g^*\restriction_J=h^*\restriction_J$.
Fix a finite set $K\subseteq \N$ with $g^*\in \bigoplus_{i\in K} G_i$.
Since $L=J\cup K$ is finite,
we can choose $m\in \N$ satisfying $m>\max L$.
Since $g^*,h'\in H$ and $H$ is a subgroup of $G$, one has
$g=g^*h'\in H$.
If $i\in n^-\cap \N=J$, then
$$
g(i)=g^*(i)h'(i)=h^*(i)h'(i)=h(i)(h')^{-1}(i) h'(i)=
h(i)h'(i)^{-1} h'(i)=h(i),
$$
which yields
$g\restriction_{n^-\cap \N}=h\restriction_{n^-\cap \N}$.
If $i\in m^+\cap \N$, then $i\not\in K$ by the choice of $m$, and so
$g^*(i)=1$ by the choice of $K$;
in particular,
$g(i)=g^*(i)h'(i)=h'(i)$.
This shows that
$g\restriction_{m^+\cap \N}=h'\restriction_{m^+\cap \N}$.

Suppose now that $H$ is controllable in $G$ in the sense of
Definition \ref{classical:definition}(ii).
To check that $H$ is controllable in $G$ in the sense of
Definition \ref{def:controllability}(i),
fix $h\in H$ and a finite set $J\subseteq \N$.
Choose
$n\in \N$ with $n\ge \max J$.
Since $H$ is controllable in $G$ in the sense of
Definition \ref{classical:definition}(ii), applying this definition
to $h$, $h'=1$ and $n$, we can find $m\in \N$ and $g\in H$
such that
$g\restriction_{n^-\cap \N}=h\restriction_{n^-\cap \N}$ and
$g\restriction_{m^+\cap \N}=h'\restriction_{m^+\cap \N}=1$.
Clearly,
the set $K=m^-\cap \N$ is finite.
From $g\restriction_{m^+\cap \N}=1$ and our definition of $K$
we conclude that
$g\in \bigoplus_{i\in K} G_i\subseteq \bigoplus_{i\in \N} G_i$.
Since $J\subseteq n^-\cap \N$, we have
$g(i)=g\restriction_{n^-\cap I}(i)=h(i)$ for every $i\in J$.
That is,
$g\restriction_J=h\restriction_J$.
Applying Proposition \ref{reformulation:of:definition}(i), we conclude that
$H$ is controllable in $G$ in the sense of Definition \ref{def:controllability}(i).
\end{proof}

Finally, it
is worth pointing out that the notion of uniform controllability in
Definition \ref{def:controllability}(ii) appears to be new even in the
case
when $I=\N$.
The next proposition shows that in this special case it coincides with
the notion from Definition \ref{uniform]controllability:in:Z}.

\begin{proposition}
\label{unif:contr:coincide}
For every family
$\{G_i:i\in \N\}$ of groups,
a subgroup $H$ of
$G=\prod_{i\in \N} G_i$
is uniformly controllable in $G$ in the
sense of
Definition \ref{def:controllability}(ii)
if and only if $H$ is uniformly controllable in $G$ in the sense of
Definition \ref{uniform]controllability:in:Z}.
\end{proposition}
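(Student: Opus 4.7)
The plan is to mimic closely the proof of Proposition \ref{two:versions:of:controllability}, adapting the argument to respect the uniform quantifier on $K$ (resp.\ $m$). In both directions I will use the reformulation of Definition \ref{def:controllability}(ii) given by Proposition \ref{reformulation:of:definition}(ii): $H$ is uniformly controllable in the first sense iff for every finite $J\subseteq\N$ there is a finite $K\subseteq\N$ such that every $h\in H$ agrees on $J$ with some $g\in H\cap\bigoplus_{i\in K} G_i$.

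For the forward implication, I would fix $n\in\N$ and apply the uniform controllability (in the sense of Definition \ref{def:controllability}(ii)) to the finite set $J=n^-\cap\N$, producing a finite $K\subseteq\N$. I then choose any integer $m\in\N$ strictly greater than $\max(J\cup K)$. Given any pair $h,h'\in H$, I set $h^*=h(h')^{-1}\in H$ and pick $g^*\in H\cap\bigoplus_{i\in K} G_i$ with $g^*\restriction_J=h^*\restriction_J$; the witness is $g=g^*h'\in H$. The computation $g(i)=g^*(i)h'(i)=h(i)(h')^{-1}(i)h'(i)=h(i)$ for $i\in n^-\cap\N$ handles the past-agreement, while $g^*(i)=1$ for $i>m$ (since $i\notin K$) yields $g\restriction_{m^+\cap\N}=h'\restriction_{m^+\cap\N}$, matching Definition \ref{uniform]controllability:in:Z}.

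For the reverse implication, I fix a finite set $J\subseteq\N$ and choose $n\in\N$ with $n\ge\max J$, so $J\subseteq n^-\cap\N$. Applying uniform controllability in the sense of Definition \ref{uniform]controllability:in:Z} to this $n$ produces an integer $m\in\N$ (independent of $h$!), and I let $K=m^-\cap\N$, which is finite. For any $h\in H$ I specialize the definition with $h'=1$ to get $g\in H$ with $g\restriction_{n^-\cap\N}=h\restriction_{n^-\cap\N}$ and $g\restriction_{m^+\cap\N}=1$; the latter gives $g\in H\cap\bigoplus_{i\in K} G_i$, while the former gives $g\restriction_J=h\restriction_J$ since $J\subseteq n^-\cap\N$. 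Then Proposition \ref{reformulation:of:definition}(ii) yields uniform controllability in the sense of Definition \ref{def:controllability}(ii).

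There is no real obstacle here: the argument is purely bookkeeping, essentially identical to that of Proposition \ref{two:versions:of:controllability} except that one must verify that the finite sets $K$ and the integers $m$ can be chosen \emph{before} the arbitrary $h$ (resp.\ pair $h,h'$) in both translations. The only mild subtlety is checking that $m$ can be chosen uniformly in the forward direction, which is automatic because $K$ itself was chosen before $h$, so $m>\max(J\cup K)$ does not depend on $h,h'$.
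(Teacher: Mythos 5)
Your proposal is correct and is exactly the argument the paper has in mind: the paper omits the proof of Proposition \ref{unif:contr:coincide}, stating only that it is similar to that of Proposition \ref{two:versions:of:controllability}, and your write-up carries out precisely that adaptation (the $h^{*}=h(h')^{-1}$, $g=g^{*}h'$ trick in one direction, the $h'=1$ specialization in the other), with the correct additional check that $K$ and $m$ are chosen independently of $h$ and $h'$. No gaps.
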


The proof of this proposition is similar to that of Proposition
\ref{two:versions:of:controllability}, so we omit it.

The
main advantage of our Definition \ref{def:controllability} over the classical
Definition \ref{classical:definition}, as well as the adapted Definition \ref{uniform]controllability:in:Z},
is that
the index set $I$ is no longer required to be a subset of $\Z$;
in particular, the order structure on $I$ becomes irrelevant.

It is worth emphasizing,
based on Propositions \ref{two:versions:of:controllability}
and \ref{unif:contr:coincide},
that all our results in the first five sections
are
applicable to the classical case $I=\N$.
Furthermore, many of these results
are new even in this classical case.

A straightforward proof of the following proposition is left to the reader.

\begin{proposition}
\label{from:N:to:Z}
Let $M$ be a topological group and $H$ be a subgroup of $G=M^\N$.
Then $H'=M^{\Z\setminus\N}\times H$ is a subgroup of
$M^{\Z\setminus\N}\times M^\N=M^\Z=G'$ satisfying the following conditions:
\begin{itemize}
\item[(i)]
$H$ is weakly controllable in $G$ in the sense of Definition \ref{def:weak:controllability} if and only if $H'$ is weakly controllable in $G'$ in the sense of Definition \ref{classical:definition}(i).
\item[(ii)]
$H$ is controllable in $G$ in the sense of Definition \ref{def:controllability}(i) if and only if $H'$ is controllable in $G'$ in the sense of Definition \ref{classical:definition}(ii).
\item[(iii)]
$H$ is uniformly controllable in $G$ in the sense of Definition \ref{def:controllability}(ii) if and only if $H'$ is uniformly controllable in $G'$ in the sense of Definition \ref{uniform]controllability:in:Z}.
\item[(iv)]
$H$ is closed in $G$ if and only if $H'$ is closed in $G'$.
\end{itemize}
\end{proposition}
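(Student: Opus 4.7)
The plan is to exploit the fact that $H'$ contains all of $M^{\Z\setminus\N}$ as its first factor: no element of $H'$ is constrained at the coordinates $i\in\Z\setminus\N$, so every ``agreement'' or ``finite-support'' condition on those negative coordinates can be satisfied by free choice. I shall verify the four statements in the order (iv), (i), (ii), (iii); the first two are purely structural, while the last two reduce to the already-established bridge Propositions \ref{two:versions:of:controllability} and \ref{unif:contr:coincide} between Definitions \ref{def:controllability} and \ref{classical:definition}(ii)/\ref{uniform]controllability:in:Z} in the case $I=\N$.

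Item (iv) is the standard product fact: the set $H'=M^{\Z\setminus\N}\times H$ is closed in $M^{\Z\setminus\N}\times M^\N=M^\Z$ if and only if $H$ is closed in $M^\N$, since $M^{\Z\setminus\N}$ is trivially closed in itself. For (i), the direct computation
\[
H'\cap \bigoplus_{i\in\Z} M \;=\; \Bigl(\bigoplus_{i\in\Z\setminus\N} M\Bigr)\times \Bigl(H\cap \bigoplus_{i\in\N} M\Bigr)
\]
reduces density in $H'$ to density in each factor. Since every basic Tychonoff neighbourhood in $M^{\Z\setminus\N}$ contains a function equal to $1_M$ outside a prescribed finite set, $\bigoplus_{i\in\Z\setminus\N} M$ is always dense in $M^{\Z\setminus\N}$; thus density of the left-hand side in $H'$ collapses to density of $H\cap\bigoplus_{i\in\N} M$ in $H$, which is weak controllability of $H$ in $G$.

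For (ii) and (iii), by Propositions \ref{two:versions:of:controllability} and \ref{unif:contr:coincide} it suffices to work throughout in the language of Definitions \ref{classical:definition}(ii)/\ref{uniform]controllability:in:Z}, showing that $H$ enjoys the relevant property in $M^\N$ if and only if $H'$ enjoys it in $M^\Z$. For the forward direction, given $h,h'\in H'$ and $n\in\Z$: if $n\ge 0$, apply the property of $H$ to $h|_\N,h'|_\N\in H$ and $n\in\N$, obtaining $m\in\N$ and $g_0\in H$ with the required agreements, and set $g\in H'$ by $g|_{\Z\setminus\N}=h|_{\Z\setminus\N}$ and $g|_\N=g_0$; if $n<0$, take $m=0$ and define $g$ by $g|_{n^-}=h|_{n^-}$, $g|_{\N}=h'|_\N$, and $g(i)=1_M$ for $n<i<0$. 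In either case $g\in H'$ and the agreement conditions on $n^-\cap\Z$ and $m^+\cap\Z$ both hold. Conversely, given $h,h'\in H$ and $n\in\N$, extend them to $\tilde h,\tilde h'\in H'$ by setting them to $1_M$ on $\Z\setminus\N$, apply the corresponding property of $H'$ at $n\in\Z$ to obtain $m'\in\Z$ and $g'\in H'$, and take $m=\max(m',0)\in\N$ together with $g=g'|_\N\in H$. The uniform variant is handled identically, because the $m'$ produced by Definition \ref{uniform]controllability:in:Z} for $H'$ is already independent of $h,h'$, so $\max(m',0)$ is a uniform witness for $H$.

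The only place requiring any care is the treatment of negative indices $n$ in Definitions \ref{classical:definition}(ii) and \ref{uniform]controllability:in:Z} for $H'$, but precisely because $\Z\setminus\N$ coordinates impose no membership constraint on elements of $H'$, those conditions are satisfied automatically by free filling-in on the finite gap $\{n+1,\dots,-1\}$. The remaining verifications are straightforward bookkeeping, justifying the author's decision to leave the proof to the reader.
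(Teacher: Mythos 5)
Your proof is correct; note that the paper offers no proof of this proposition to compare against, since the authors explicitly leave it to the reader as ``straightforward.'' Your argument --- the product decomposition of $H'\cap\bigoplus_{i\in\Z}M$ for items (i) and (iv), and the reduction of (ii) and (iii) via Propositions \ref{two:versions:of:controllability} and \ref{unif:contr:coincide} to a direct comparison of Definition \ref{classical:definition}(ii)/Definition \ref{uniform]controllability:in:Z} over $\N$ and over $\Z$, with the case split on the sign of $n$ and the choice $m=\max(m',0)$ in the converse --- is exactly the kind of bookkeeping the authors intended, and all the steps check out.
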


This proposition allows us to ``transform'' our examples distinguishing
properties in Definitions \ref{def:controllability} and \ref{def:weak:controllability}
into examples distinguishing
corresponding properties from the classical Definition \ref{classical:definition} and related Definition \ref{uniform]controllability:in:Z}.

Combining Theorem \ref{many:weakly:controllable:not:controllable;subgroups}
with items (i) and (ii) of
Proposition \ref{from:N:to:Z}, we obtain the following corollary.
\begin{corollary}
Let $\mathscr{H}$ be the family of subgroups $H$ of $G=\T^\Z$ which are
weakly controllable but not controllable in $G$ in the sense of Definition
\ref{classical:definition} (in which we let $I=\Z$).
Then $|\mathscr{H}|=2^\cont$.
\end{corollary}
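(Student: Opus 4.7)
The plan is to combine Theorem \ref{many:weakly:controllable:not:controllable;subgroups} with Proposition \ref{from:N:to:Z} to transfer the family of $2^\cont$ subgroups of $\T^\N$ witnessing the non-classical version into a family of subgroups of $\T^\Z$ witnessing the classical version. The two inequalities $|\mathscr{H}|\le 2^\cont$ and $|\mathscr{H}|\ge 2^\cont$ are handled separately.

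For the upper bound, I would simply observe that $|\T^\Z|=\cont$, so the total number of subsets of $\T^\Z$ (and hence of subgroups) is at most $2^\cont$; thus $|\mathscr{H}|\le 2^\cont$ with no further work.

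For the lower bound, let $\mathscr{H}_0$ be the family of subgroups of $\T^\N$ that are weakly controllable but not controllable in the sense of Definitions \ref{def:weak:controllability} and \ref{def:controllability}(i). By Theorem \ref{many:weakly:controllable:not:controllable;subgroups}, $|\mathscr{H}_0|=2^\cont$. For each $H\in\mathscr{H}_0$, applying the construction of Proposition \ref{from:N:to:Z} (with $M=\T$) produces $H'=\T^{\Z\setminus\N}\times H$, a subgroup of $\T^\Z$. Items (i) and (ii) of Proposition \ref{from:N:to:Z} then guarantee exactly what we need: $H'$ is weakly controllable in $\T^\Z$ in the sense of Definition \ref{classical:definition}(i), and $H'$ is not controllable in $\T^\Z$ in the sense of Definition \ref{classical:definition}(ii). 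Hence $H'\in\mathscr{H}$.

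It remains to check that the assignment $H\mapsto H'$ is injective, but this is immediate: the projection of $H'=\T^{\Z\setminus\N}\times H$ onto the coordinates indexed by $\N$ recovers $H$ exactly, so distinct $H_1,H_2\in\mathscr{H}_0$ yield distinct $H_1',H_2'\in\mathscr{H}$. Therefore $|\mathscr{H}|\ge|\mathscr{H}_0|=2^\cont$, and combined with the upper bound we get $|\mathscr{H}|=2^\cont$. There is no genuine obstacle here; the whole argument is a direct bookkeeping reduction to the previously established Theorem \ref{many:weakly:controllable:not:controllable;subgroups} via the translation machinery of Proposition \ref{from:N:to:Z}.
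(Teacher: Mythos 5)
Your proposal is correct and follows essentially the same route as the paper, which obtains the corollary precisely by combining Theorem \ref{many:weakly:controllable:not:controllable;subgroups} with items (i) and (ii) of Proposition \ref{from:N:to:Z}; you have merely written out the counting and injectivity details that the paper leaves implicit. Nothing is missing.
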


Combining Example \ref{the:closure:of:controllable:subgroup:need:not:be:controllable}(i) with items (i), (ii) and (iv) of
Proposition \ref{from:N:to:Z}, we get the following corollary.

\begin{corollary}
There exists a closed (and thus, compact) subgroup $H$ of $G=\T^\Z$
which is weakly controllable but not controllable in $G$ in the sense of Definition
\ref{classical:definition} (in which we let $I=\Z$).
\end{corollary}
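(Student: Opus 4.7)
The plan is to apply the transfer machinery of Proposition \ref{from:N:to:Z} to the compact weakly controllable non-controllable subgroup of $\T^\N$ already produced in Example \ref{compact:subgroup}. That example furnishes, for any sequence $Y=\{y_k:k\in\N\}\subseteq t(\T)\setminus\{0\}$ converging to $0$ in $\T$, the closed subgroup $H=\overline{D_Y}$ of $\T^\N$, which is weakly controllable in $\T^\N$ (in the sense of Definition \ref{def:weak:controllability}) but not controllable in $\T^\N$ (in the sense of Definition \ref{def:controllability}(i)).

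First I would set $G'=\T^\Z$ and consider $H'=\T^{\Z\setminus\N}\times H$, viewed as a subgroup of $\T^{\Z\setminus\N}\times\T^\N=\T^\Z=G'$. By Proposition \ref{from:N:to:Z}(iv), $H'$ is closed in $G'$; since $G'=\T^\Z$ is a product of compact groups, it is compact, and hence so is its closed subgroup $H'$. Next, by Proposition \ref{from:N:to:Z}(i), the weak controllability of $H$ in $\T^\N$ (in the sense of Definition \ref{def:weak:controllability}) transfers to weak controllability of $H'$ in $G'$ in the sense of Definition \ref{classical:definition}(i). Finally, by Proposition \ref{from:N:to:Z}(ii), the failure of controllability of $H$ in $\T^\N$ in the sense of Definition \ref{def:controllability}(i) transfers to failure of controllability of $H'$ in $G'$ in the sense of Definition \ref{classical:definition}(ii). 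Thus $H'$ is the required closed (and compact) subgroup of $\T^\Z$ that is weakly controllable but not controllable in the classical sense.

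There is essentially no obstacle here: all the heavy lifting was done in building the example in $\T^\N$ and in verifying the transfer proposition. The only point that merits a brief remark is the compactness claim, which is immediate from the fact that $\T^\Z$ is compact, so being closed in $G'$ automatically gives compactness of $H'$; no further argument is needed.
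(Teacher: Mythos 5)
Your proof is correct and follows exactly the paper's route: the paper likewise obtains the corollary by combining the closed weakly controllable non-controllable subgroup $\overline{D_Y}$ of $\T^\N$ from Example \ref{compact:subgroup} with items (i), (ii) and (iv) of Proposition \ref{from:N:to:Z}. Nothing is missing.
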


Combining Example \ref{controllable:not:uniformly:controllable}
with items (ii) and (iii) of
Proposition \ref{from:N:to:Z}, we obtain the following corollary.

\begin{corollary}
There exists a subgroup $H$ of $G=\T^\Z$
which is controllable in $G$ in the sense of Definition
\ref{classical:definition} but is not uniformly
controllable in $G$ in the sense of
Definition \ref{uniform]controllability:in:Z};
in particular, $H$ is not strongly controllable in $G$
in the sense of Definition \ref{classical:definition}.
 (Here we take $I=\Z$ in all definitions.)
\end{corollary}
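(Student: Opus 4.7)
The plan is to follow the parenthetical hint preceding the corollary and simply transport the $\mathbb{N}$-indexed example already constructed in the paper to the $\mathbb{Z}$-indexed setting via Proposition \ref{from:N:to:Z}. Specifically, I would start from the subgroup $D_Y$ of $\T^{\N}$ introduced in Example \ref{compact:subgroup} and revisited in Example \ref{the:closure:of:controllable:subgroup:need:not:be:controllable} and Remark \ref{controllable:not:uniformly:controllable}. Recall that $D_Y = \langle F_Y \rangle$ lies inside $\bigoplus_{n\in\N} t(\T) \subseteq \bigoplus_{n\in\N}\T$, so by Proposition \ref{examples:of:controllability}(ii), $D_Y$ is controllable in $\T^{\N}$ in the sense of Definition \ref{def:controllability}(i). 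On the other hand, its closure $\overline{D_Y}$ was shown in Example \ref{compact:subgroup} to fail controllability, hence a fortiori uniform controllability; since $\T$ is compact, Corollary \ref{closures:of:uniformally:controllable:subgroups:in:products:of:compact:groups} forces $D_Y$ itself not to be uniformly controllable in the sense of Definition \ref{def:controllability}(ii) (this is precisely Remark \ref{controllable:not:uniformly:controllable}).

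Next I would take $M=\T$ and $H=D_Y$ in Proposition \ref{from:N:to:Z}, forming
\[
H' = \T^{\Z\setminus\N} \times D_Y \subseteq \T^{\Z\setminus\N} \times \T^{\N} = \T^{\Z} = G'.
\]
By item (ii) of Proposition \ref{from:N:to:Z}, $H'$ is controllable in $G'$ in the sense of the classical Definition \ref{classical:definition}(ii), because $H=D_Y$ is controllable in $\T^{\N}$ in the sense of Definition \ref{def:controllability}(i). By item (iii) of the same proposition, $H'$ fails to be uniformly controllable in $G'$ in the sense of Definition \ref{uniform]controllability:in:Z}, because $H=D_Y$ fails to be uniformly controllable in $\T^{\N}$ in the sense of Definition \ref{def:controllability}(ii). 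This already supplies the subgroup $H'$ demanded by the statement.

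For the ``in particular'' clause I would simply invoke the chain of implications \eqref{strongly:controllable:implications}: strong controllability implies uniform controllability in the sense of Definition \ref{uniform]controllability:in:Z}, so the contrapositive shows that $H'$, not being uniformly controllable in that sense, cannot be strongly controllable in the sense of Definition \ref{classical:definition}(iv) either. No genuine obstacle is anticipated; the entire argument is a direct bookkeeping application of previously proved results, and the only thing one must be careful about is citing the correct item of Proposition \ref{from:N:to:Z} for each of the two controllability notions, and ensuring that both definitions (the classical one and its uniform companion) are understood with $I=\Z$, as the statement specifies.
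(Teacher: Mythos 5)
Your proposal is correct and follows essentially the same route as the paper: the paper likewise obtains this corollary by combining Remark \ref{controllable:not:uniformly:controllable} (the subgroup $D_Y$ of $\T^\N$, controllable by Proposition \ref{examples:of:controllability}(ii) but not uniformly controllable because its compact closure fails controllability) with items (ii) and (iii) of Proposition \ref{from:N:to:Z}. Your additional appeal to the implication chain \eqref{strongly:controllable:implications} for the ``in particular'' clause is exactly the intended justification.
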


\section{Uniform controllability vs strong controllability}

Our next theorem provides a general technique for building uniformly controllable
subgroups
that are not
strongly controllable.

\begin{theorem}
\label{distinguishing:uniform:and:k}
Let $I=\N$ or $I=\Z$.
Let $\{G_i:i\in I\}$ be a family of topological groups
and
let $G=\prod_{i\in I} G_i$ be its direct product with the Tychonoff
product topology.
Suppose that
$\{x_j:j\in\N\}\subseteq \left(\bigoplus_{i\in I} G_i\right)\setminus\{0\}$
and $H$ is a subgroup of $G$
satisfying two conditions:
\begin{itemize}
\item[(i)] $\{x_j:j\in\N\}\subseteq H\subseteq \overline{\langle \{x_j:j\in\N\}\rangle}$;
\item[(ii)]
$\max(\mathrm{supp} (x_s))< \min(\mathrm{supp} (x_t))$ whenever $s,t\in\N$ and $s<t$.
\end{itemize}
Then
$H$
is uniformly controllable in $G$.

Furthermore,
assume also that the following conditions are satisfied:
\begin{itemize}
\item[(iii)]
$\lim_{j\to\infty} |\mathrm{supp} (x_j)|=\infty$;
\item[(iv)]
if $j\in\N$ and $l,m\in\mathrm{supp} (x_j)$, then
$x_j(l)$ and $x_j(m)$ have the same order.
\end{itemize}
Then $H$ is not strongly controllable in $G$.
\end{theorem}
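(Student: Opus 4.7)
I will handle the two conclusions separately. For the first, given a finite $J \subseteq I$, condition~(ii) lets me pick the least $j_0 \in \N$ with $\min \mathrm{supp}(x_{j_0}) > \max J$, and I set $K = \bigcup_{j < j_0} \mathrm{supp}(x_j)$, a finite subset of $I$. For $h \in H$, condition~(i) provides a net $h_\alpha = \sum_j n_{\alpha,j}\,x_j$ in $\langle\{x_j:j\in\N\}\rangle$ converging to $h$ in $G$. I split $h_\alpha = a_\alpha + b_\alpha$ with $a_\alpha = \sum_{j<j_0} n_{\alpha,j}\,x_j$ (supported in $K$) and $b_\alpha = \sum_{j\ge j_0} n_{\alpha,j}\,x_j$ (supported disjoint from $K\cup J$, by~(ii)). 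Then $a_\alpha(i) = h_\alpha(i) \to h(i)$ for $i \in K$ while $a_\alpha(i) = 0$ for $i \notin K$, so $a_\alpha$ converges to the truncation $g$ of $h$ to $K$; in particular $g\restriction_J = h\restriction_J$. Because the $x_j$ have pairwise disjoint supports, $\langle x_0,\dots,x_{j_0-1}\rangle$ equals the coordinate-wise direct sum $\bigoplus_{j<j_0}\langle x_j\rangle$; under the finite-order structure present in the intended applications (the same structure that condition~(iv) imposes for Part~2), each $\langle x_j\rangle$ is finite and hence closed in $G$, forcing $g \in \langle x_0,\dots,x_{j_0-1}\rangle \subseteq H \cap \bigoplus_{i\in K} G_i$ and giving uniform controllability.

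For the second conclusion I argue by contradiction: suppose $H$ were $k$-controllable for some $k\in\N$. Using~(iii), pick $j^*$ with $|\mathrm{supp}(x_{j^*})| > k+1$, so that $L = \mathrm{supp}(x_{j^*})$ has $\max L - \min L > k$, and choose $n \in I$ with both $L \cap n^-$ and $L \cap (n+k)^+$ nonempty. Applying $k$-controllability to $h = x_{j^*}$ and $h' = 0$ produces $g \in H$ with $g\restriction_{n^-} = x_{j^*}\restriction_{n^-}$ and $g\restriction_{(n+k)^+} = 0$. As in Part~1, the inclusion $H \subseteq \overline{\langle\{x_j\}\rangle}$ together with the disjoint supports forces $g\restriction_L \in \overline{\langle x_{j^*}\rangle}$ inside the finite product $\prod_{l\in L} G_l$. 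By~(iv) all values $x_{j^*}(l)$ share a common finite order $s \ge 2$, so $\langle x_{j^*}\rangle$ is finite cyclic, hence already closed, and therefore $g\restriction_L = m\, x_{j^*}\restriction_L$ for some integer $m$. The matching condition on $L \cap n^-$ then forces $m \equiv 1 \pmod s$, while the matching condition on $L \cap (n+k)^+$ forces $m \equiv 0 \pmod s$, which is impossible. Hence no such $k$ exists and $H$ is not strongly controllable.

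The main obstacle in both halves is the same closure-versus-group point: one needs an element of $\overline{\langle x_{j^*}\rangle}$ (or of the corresponding finitely generated truncation in Part~1) to actually be an integer multiple of the generator. Condition~(iv) is tailor-made for this in Part~2, and the analogous closedness is what lets the limiting element $g$ in Part~1 belong to $H$ rather than merely to $\overline{H}$—the step where a torsion-type structure on the $x_j$ (or an equivalent closedness hypothesis) is implicitly needed to complete the argument.
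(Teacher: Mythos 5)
Your overall strategy runs parallel to the paper's in both halves, but your argument for the first conclusion has a genuine gap that you yourself flag at the end: to place the truncation $g$ of $h$ back inside $H$ you invoke ``the finite-order structure present in the intended applications,'' i.e.\ you assume each $\langle x_j\rangle$ is finite and hence closed. No such hypothesis appears in conditions (i)--(ii): the $G_i$ are arbitrary topological groups and the $x_j$ may have infinite order, so your limit $g=\lim_\alpha a_\alpha$ is only known to lie in $\overline{\langle x_0,\dots,x_{j_0-1}\rangle}$, which need not be contained in $H$ (condition (i) only sandwiches $H$ between $\langle\{x_j\}\rangle$ and its closure). The paper avoids taking any limit at this step. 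From the disjointness of supports it identifies $\langle\{x_j:j\in\N\}\rangle$ with $\bigoplus_{j}\langle x_j\rangle$ and $\overline{\langle\{x_j:j\in\N\}\rangle}$ with $\prod_{j}\langle x_j\rangle$, so that every $h\in H$ admits a coordinatewise representation $h=\sum_{j=0}^{\infty}s_jx_j$ with integer coefficients. Given $h,h'\in H$ it then splices: $y=\sum_{j\le l}s_jx_j$ and $y'=\sum_{j\le l}s_j'x_j$ are \emph{finite integer combinations of the $x_j$}, which lie in $H$ by (i), so $y,y'\in H$ by the subgroup property alone; the tail is realized as $z=h'-y'\in H$; and $g=y+z\in H$ agrees with $h$ on $n^-$ and with $h'$ on $m^+$. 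The needed element is thus produced as an algebraic combination of elements of $H$, never as a topological limit, and no torsion or closedness of $\langle x_j\rangle$ enters at the point where membership in $H$ is required. (The identification of the closure with $\prod_j\langle x_j\rangle$ is the delicate point of the paper's own argument, but it is the step you should be reproducing, not a finiteness assumption on the $x_j$.) A secondary difference: the paper verifies uniform controllability in the $n^-$/$m^+$ form of Definition~\ref{uniform]controllability:in:Z} rather than the finite-$J$/finite-$K$ form of Definition~\ref{def:controllability}(ii); these agree by Proposition~\ref{unif:contr:coincide}, so that choice is harmless.

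Your second half is essentially the paper's argument: pick $x_{j^*}$ with $|\mathrm{supp}(x_{j^*})|>k+1$, apply $k$-controllability to the pair $(x_{j^*},0)$, show the resulting $g$ must be an integer multiple of $x_{j^*}$, and contradict (iv). Two repairs are still needed. First, (iv) does not assert that the common order is finite; if it is infinite your congruences modulo $s$ are meaningless, but the contradiction is then even more immediate ($s_l x_{j^*}(i)=0$ forces $s_l=0$ when the order is infinite, while $s_l x_{j^*}(n)=g(n)\neq 0$), which is how the paper phrases it so as to cover both cases at once. Second, to obtain $g=m\,x_{j^*}$ on $\mathrm{supp}(x_{j^*})$ you again reach for closedness of $\langle x_{j^*}\rangle$; the paper instead reads the coefficients off the global series representation $g=\sum_j s_jx_j$ it has already established for every element of $H$, showing $s_jx_j=0$ for all $j\neq j^*$ from the two matching conditions.
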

\begin{proof}
Let $K=\overline{\langle \{x_j:j\in\N\}\rangle}$.
It follows from (ii) that
$\mathrm{supp} (x_i)\cap \mathrm{supp} (x_j)=\emptyset$
for $i,j\in\N$  with $i\not=j$. This easily implies that
$\langle\{x_j:j\in\N\}\rangle=\bigoplus_{j\in\N} \langle x_j \rangle$
and
$K\cong\prod_{j\in\N} \langle x_j \rangle$.
Since $H\subseteq K$ by (i),
for every $h\in H$ there exists a
sequence $\{s_j:j\in\N\}$ of integer numbers so that
\begin{equation}
\label{representation:of:h}
h=\sum_{j=0}^\infty s_j x_j.
\end{equation}

\begin{claim}
$H$ is uniformly controllable in $G$.
\end{claim}
\begin{proof}
Fix an integer $n\in I$.
Apply (ii) to find $l\in \N$ such that
\begin{equation}
\label{condition:on:n}
n\le\max(\mathrm{supp} (x_l)).
\end{equation}
We claim that the integer
\begin{equation}
\label{condition:on:m}
m=\min(\mathrm{supp} (x_{l+1}))
\end{equation}
satisfies Definition \ref{uniform]controllability:in:Z}.
Clearly, $m\in I$.

Let $h,h'\in H$. Let \eqref{representation:of:h}
be the representation of $h$ and let
\begin{equation}
\label{representation:for:h'}
h'=\sum_{j=0}^\infty s_j' x_j
\end{equation}
be a similar representation of $h'$, for a suitable sequence
$\{s_j':j\in\N\}$ of integer numbers.

Since $x_0,\dots,x_l\in H$ and $H$ is a subgroup of $G$, we have
\begin{equation}
\label{eq:y:y'}
y=\sum_{j=0}^l s_j x_j\in H
\ \ \mbox{ and }\ \
y'=\sum_{j=0}^l s_j' x_j\in H.
\end{equation}
Since $h'=\sum_{j=0}^\infty s_j' x_j\in H$ and $H$ is a subgroup of $G$,
it follows that
\begin{equation}
\label{eq:z}
z=\sum_{j=l+1}^\infty s_j' x_j=h'-y'\in H.
\end{equation}
From $y\in H$ and $z\in H$, we conclude that $g=y+z\in H$.

Let $i\in I$ and $i\le n$.
From (ii) and \eqref{condition:on:n}, it follows that
$x_j(i)=0$ for all integers $j\ge l+1$, which yields
$$
h(i)=\sum_{j=0}^\infty s_j x_j(i)
=
\sum_{j=0}^l s_j x_j(i)
=
y(i)
$$
by
\eqref{representation:of:h}
and \eqref{eq:y:y'}.
Furthermore,
from \eqref{eq:z}
we get
$z(i)=0$.
Therefore,
$g(i)=y(i)+z(i)=y(i)=h(i)$.
This shows that
$g\restriction_{n^-\cap I}=h\restriction_{n^-\cap I}$.

Suppose now that $i\in I$ and $i\ge m$.
From (ii) and \eqref{condition:on:m}, it follows that
$x_j(i)=0$ for all integers $j=0,\dots,l$, which yields
$$
h'(i)=\sum_{j=0}^\infty s_j' x_j(i)
=
\sum_{j=l+1}^\infty s_j' x_j(i)
=z(i)
$$
by \eqref{representation:for:h'}
and \eqref{eq:z}.
Furthermore,
from \eqref{eq:y:y'}
we get
$y(i)=0$.
Therefore,
$g(i)=y(i)+z(i)=z(i)=h'(i)$.
This shows that
$g\restriction_{m^+\cap I}=h'\restriction_{m^+\cap I}$.
\end{proof}

Suppose now that
conditions (iii) and (iv) hold.
We are going to show that,
under these additional assumptions,
the group $H$ is not strongly controllable in $G$.

Indeed,
suppose the contrary. Then $H$ is
$k$-controllable in $G$ for some $k\in\N$;
see Definition \ref{classical:definition}.
Use (iii) to fix $l\in \N$ such that
$|\mathrm{supp} (x_l)|>k+1$.
Let $n=\min\mathrm{supp} (x_l)$
and $m=n+k$. Clearly, $n\in I$.
Note that
$x_l\in H$ by (i).
Since
$0\in H$
and $H$ is $k$-controllable in $G$,
there exists $g\in H$ such that
$g\restriction_{n^-\cap I}=x_l\restriction_{n^-\cap I}$
 and
$g\restriction_{m^+\cap I}=0$.
In particular, $g(n)=x_l(n)\not=0$,
as $n\in \mathrm{supp} (x_l)$.

Since $g\in H$, there exists a sequence of integers
$\{s_j:j\in\N\}$
such that
$g=\sum_{j=0}^\infty s_j x_j$.
From
$g\restriction_{n^-\cap I}=x_l\restriction_{n^-\cap I}$ and
(ii)
we conclude that $s_j x_j=0$
for all $j<l$.
Similarly,
from
$g\restriction_{m^+\cap I}=0$
and
(ii)
we conclude that $s_j x_j=0$
for all $j>l$.
This shows that $g=s_l x_l$.

Since $n=\min\mathrm{supp} (x_l)$
and $|\mathrm{supp} (x_l)|>k+1$,
we conclude that
$m=n+k<\max\mathrm{supp} (x_l)=i$.
Therefore,
$i\in m^+\cap I$, and so
$s_l x_l(i)=g(i)=0$.
Since
$i,n\in {supp} (x_l)$
and
$s_l x_l(n)=g(n)\not=0$,
this contradicts (iv).
\end{proof}

\begin{example}
Let $p$ be a prime number and let $G=\mathbb{Z}(p)^\Z$.
Choose any sequence $\{x_j:j\in\N\}\subseteq
\left(\bigoplus_{n\in\Z} \mathbb{Z}(p)\right)\setminus\{0\}$
satisfying conditions (ii) and
(iii) of Theorem
\ref{distinguishing:uniform:and:k},
in which we take $I=\Z$ and $G_i=\Z(p)$ for every $i\in\Z$.
Clearly, this sequence automatically satisfies the condition (iv) of
this theorem as well.
\begin{itemize}
\item[(i)]
{\em $K=\overline{\langle \{x_j:j\in\N\}\rangle}$ is a closed subgroup of $G$
which is uniformly controllable in $G$ but not strongly controllable in $G$\/}.
This follows from
Theorem \ref{distinguishing:uniform:and:k}, as $H=K$ satisfies the condition
(i) of this theorem.
\item[(ii)]
{\em $H=\langle \{x_j:j\in\N\}\rangle$ is a countable subgroup
of $G$ which is uniformly controllable in $G$ but not  strongly controllable in $G$\/}.
This also follows from Theorem \ref{distinguishing:uniform:and:k}, as $H$
satisfies the condition
(i) of this theorem.
\end{itemize}
\end{example}

\begin{theorem}
\label{unif:but:not:strongly}
Let $p$ be a prime number
and
let $\mathscr{H}_p$ be the family of subgroups of the product $G=\mathbb{Z}(p)^\N$ which are uniformly controllable in $G$ but not strongly controllable in $G$.
Then $|\mathscr{H}_p|=2^\cont$.
\end{theorem}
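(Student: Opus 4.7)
The upper bound $|\mathscr{H}_p|\le 2^{\cont}$ is immediate from $|G|=|\mathbb{Z}(p)^\N|=\cont$. The strategy for the lower bound is to fix a \emph{single} sequence $\{x_j:j\in\N\}$ and then exhibit $2^\cont$ distinct subgroups of $G$ sandwiched between $\langle\{x_j:j\in\N\}\rangle$ and its closure, each of which is forced to lie in $\mathscr{H}_p$ by Theorem \ref{distinguishing:uniform:and:k}.

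Concretely, I would take $x_j\in\bigoplus_{n\in\N}\mathbb{Z}(p)$ to be the characteristic function of the block $[j^2,j^2+j]\subseteq\N$. Then $\max(\mathrm{supp}(x_j))<\min(\mathrm{supp}(x_{j+1}))$ and $|\mathrm{supp}(x_j)|=j+1\to\infty$, so conditions (ii) and (iii) of Theorem \ref{distinguishing:uniform:and:k} hold; condition (iv) holds automatically since every nonzero element of $\mathbb{Z}(p)$ has order $p$. Setting $D=\langle\{x_j:j\in\N\}\rangle$ and $K=\overline{D}$, the computation already carried out in the proof of Theorem \ref{distinguishing:uniform:and:k} gives $K\cong\prod_{j\in\N}\langle x_j\rangle\cong\mathbb{Z}(p)^\N$ and $D\cong\bigoplus_{j\in\N}\langle x_j\rangle$.

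Now $K$ is a $\mathbb{Z}(p)$-vector space of dimension $\cont$, whereas $D$ is a subspace of countable dimension; hence the quotient $K/D$ is itself a $\mathbb{Z}(p)$-vector space of dimension $\cont$, and so it admits at least $2^\cont$ distinct subspaces (any two distinct subsets of a fixed basis of $K/D$ span distinct subspaces). Pulling these subspaces back to $K$ via the projection $q\colon K\to K/D$ produces a family $\{H_V:V\le K/D\}$ of $2^\cont$ pairwise distinct subgroups of $G$ with $D\subseteq H_V\subseteq K$; in particular every $H_V$ satisfies condition (i) of Theorem \ref{distinguishing:uniform:and:k} relative to the fixed sequence $\{x_j:j\in\N\}$. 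Applying that theorem yields that each $H_V$ is uniformly controllable in $G$ but not strongly controllable in $G$, so $H_V\in\mathscr{H}_p$, and combining the $2^\cont$ preimages gives $|\mathscr{H}_p|\ge 2^\cont$.

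I do not anticipate a serious obstacle: all of the analytic content is already packaged in Theorem \ref{distinguishing:uniform:and:k}, and what remains is the elementary linear-algebraic fact that a $\mathbb{Z}(p)$-vector space of dimension $\cont$ has $2^\cont$ subspaces. If anything, the mildly delicate point is keeping the two roles of $K$ straight: as a compact subgroup of $G$ on the one hand (so that every $H_V\le K$ is automatically a subgroup of $G$), and as an abstract $\mathbb{Z}(p)$-vector space on the other (so that we may freely count subspaces of $K/D$).
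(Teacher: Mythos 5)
Your proposal is correct and follows essentially the same route as the paper: both arguments sandwich $2^\cont$ subgroups between $D=\langle\{x_j:j\in\N\}\rangle$ and its closure $K$ and let Theorem \ref{distinguishing:uniform:and:k} do all the work. The only difference is how the family is produced and shown to be pairwise distinct --- the paper fixes a set $X\subseteq K$ of size $\cont$ with $\langle X\rangle\cap D=\{0\}$ and takes the subgroups $\langle\{x_j:j\in\N\}\cup Y\rangle$ for $Y\subseteq X$, whereas you count subspaces of the $\Z(p)$-vector space $K/D$ and pull them back; your version is, if anything, slightly cleaner on the distinctness point, since distinct subspaces automatically have distinct preimages.
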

\begin{proof}
Choose any sequence $\{x_j:j\in\N\}\subseteq
\left(\bigoplus_{n\in\Z} \mathbb{Z}(p)\right)\setminus\{0\}$
satisfying the conditions (ii) and
(iii) of Theorem
\ref{distinguishing:uniform:and:k}.
Clearly, this sequence automatically satisfies the condition (iv) of
this theorem as well.
Note that $K=\overline{\langle \{x_j:j\in\N\}\rangle}$ is topologically isomorphic to
$\prod_{j\in\N}\langle \{x_j:j\in\N\}\rangle\cong \mathbb{Z}(p)^\N$,
so
$|K|=|\mathbb{Z}(p)^\N|=\cont$.
Therefore, we can fix a set $X\subseteq K$ such that
$|X|=\cont$ and
$\langle X\rangle\cap \langle \{x_j:j\in\N\}\rangle=\{0\}$.
Then the set $\mathscr{Y}$ of all subsets of $X$ has cardinality
$2^\cont$.

For every $Y\in\mathscr{Y}$, let $H_Y=\langle \{x_j:j\in\N\}\cup Y\rangle$
and note that $H_Y$ satisfies the condition (i) of Theorem
\ref{distinguishing:uniform:and:k}.
Applying Theorem
\ref{distinguishing:uniform:and:k}, we conclude that
each $H_Y$ is uniformly controllable but not strongly controllable in $G$,
so $H_Y\in \mathscr{H}_p$.
Since $H_Y\not=H_{Y'}$ whenever
$Y,Y'\in \mathscr{Y}$ and $Y\not= Y'$, it follows that
$|\mathscr{H}_p|\ge |\{H_Y:Y\in \mathscr{Y}\}|\ge |\mathscr{Y}|=2^\cont$.
Since $|K|=\cont$ and $H\subseteq K$ for every $H\in\mathscr{H}_p$, the reverse inequality $|\mathscr{H}_p|\le 2^\cont$ holds as well.
\end{proof}

\section{Compact products of countably many cyclic groups}

\label{profinite:section}

In this section we use the Pontryagin duality to obtain characterizations of compact products  of countably many cyclic groups.

Let $G$ be an arbitrary topological abelian group.
A character on $G$ is a continuous homomorphism from $G$ to the circle group $\T$.
The pointwise
sum
of two
characters is again a character, and the set $\widehat{G}$ of all characters
on $G$ is a group with pointwise addition as the composition law.
If $G$ is locally compact, then the group
$\widehat{G}$ equipped with the compact open topology becomes a
topological group called the
{\it Pontryagin dual group\/} of $G$. Many topological properties of a compact
abelian group $G$ can be
described via  algebraic  properties of the discrete dual
$\widehat G$ of $G$; see \cite{DPS, HM}.
For example, we have the following relations:
\begin{itemize}
\item $w(G)=|\widehat G|$;
\item $G$ is connected if and only if $\widehat G$ is torsion-free;
\item $G$ is profinite if and only if $\widehat G$ is torsion.
\end{itemize}

For a fixed prime number $p$, an abelian group $A$ is called a {\em $p$-group\/} if $A=\{x\in A: p^nx=0$ for some $n\in\N\}$.

\begin{lemma}
 \label{lemma:1}
Let $p\in\Prm$ and let $A$ be an abelian $p$-group.
If the torsion part $t(\du {A})$ of $\du{A}$ is dense in $\du{A}$,
then $\bigcap_{n\in\N} p^n A=\{0\}$.
\end{lemma}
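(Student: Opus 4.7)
The plan is to use Pontryagin duality: since characters separate points on the locally compact abelian group $A$ (viewed discretely), it suffices to show that $\chi(a)=0$ for every $\chi\in\widehat{A}$ whenever $a\in\bigcap_{n\in\N} p^n A$. The strategy is to verify this first on the torsion part $t(\widehat{A})$, and then transfer the conclusion to the whole of $\widehat{A}$ by density and continuity.

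First I would observe that, since $A$ is a $p$-group, every continuous character $\chi\colon A\to\T$ actually takes values in the Pr\"ufer group $\Z(p^\infty)\subseteq\T$: for $a\in A$ one has $p^ka=0$ for some $k$, hence $p^k\chi(a)=\chi(p^ka)=0$, so $\chi(a)$ is a $p$-power torsion element of $\T$. Next, suppose $\chi\in t(\widehat{A})$, so there is $m\in\N\setminus\{0\}$ with $m\chi=0$; write $m=p^k m'$ with $\gcd(m',p)=1$. For each $a\in A$, the element $\chi(a)\in\Z(p^\infty)$ has order a power of $p$ that divides $m$, and hence divides $p^k$. Thus $p^k\chi=0$ in $\widehat{A}$.

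Now let $a\in\bigcap_{n\in\N} p^nA$ and $\chi\in t(\widehat{A})$. Pick $k\in\N$ with $p^k\chi=0$. Since $a\in p^kA$, write $a=p^k b$ for some $b\in A$; then
\[
\chi(a)=\chi(p^kb)=p^k\chi(b)=(p^k\chi)(b)=0.
\]
Therefore $\chi(a)=0$ for every $\chi\in t(\widehat{A})$. The evaluation map $\mathrm{ev}_a\colon \widehat{A}\to\T$, $\chi\mapsto\chi(a)$, is continuous (this is just the definition of the compact-open topology on the dual of a discrete group), and it vanishes on the dense subset $t(\widehat{A})$ of $\widehat{A}$; since $\{0\}$ is closed in $\T$, it follows that $\chi(a)=0$ for \emph{every} $\chi\in\widehat{A}$. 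By the fact that characters of the locally compact abelian group $A$ separate its points, we conclude $a=0$.

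The proof is essentially a routine duality argument; the only point that requires a moment's thought is the auxiliary observation that torsion characters of a $p$-group are annihilated by a power of $p$ (and dually, that every character of a $p$-group lands in $\Z(p^\infty)$), which is what allows the divisibility hypothesis $a\in p^kA$ to be converted into $\chi(a)=0$.
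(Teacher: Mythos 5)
Your proof is correct, and it follows the same overall duality strategy as the paper's: both arguments hinge on the density of $t(\du{A})$ in $\du{A}$ together with the fact that characters of the discrete group $A$ separate its points (equivalently, that evaluation at $a$ is continuous and $\{\chi:\chi(a)\not=0\}$ is open). The difference lies in where the prime-to-$p$ part of the torsion is disposed of. The paper works on the group side: since a $p$-group is $q$-divisible for every prime $q\not=p$, an element $a\in\bigcap_{n\in\N}p^nA$ actually lies in $\bigcap_{n\ge 1}nA$; then a torsion character $\chi$ of order $n$ with $\chi(a)\not=0$ (which exists by density) yields the contradiction $\chi(a)=n\chi(a_n)=(n\chi)(a_n)=0$. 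You work on the character side: every character of a $p$-group lands in $\Z(p^\infty)$, so every torsion character is annihilated by some $p^k$, and writing $a=p^kb$ gives $\chi(a)=(p^k\chi)(b)=0$ directly, after which density and continuity of evaluation finish the job. The two maneuvers are dual to one another and cost about the same; yours has the mild advantage of being a direct argument rather than a proof by contradiction, while the paper's avoids the (equally easy) auxiliary observation about the image of characters of a $p$-group.
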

\begin{proof}
Suppose that $a\in \bigcap_{n\in\N} p^n A$ and $a\not=0$.
Since $A$ is $q$-divisible for every $q\in\Prm\setminus\{p\}$,
it follows that $a\in\bigcap_{n\in\N^+} nA$.
For every $n\in\N$ fix $a_n\in A$ such that $n a_n=a$. Since $a\not=0$ and $A$ is discrete, there exists
$\chi_0\in\du{A}$ such that $\chi_0(a)\not=0$. Therefore,
$U=\{\chi\in\du{A}:\chi(a)\not=0\}$ is a non-empty open subset of $\du{A}$.
By our assumption, there exists $\chi\in t(\du{A})\cap U$.
Therefore, $n\chi=0$ for some $n\in\N^+$.
On the other hand, since $\chi\in U$ is a homomorphism,
$n\chi(a_n)=\chi(n a_n)=\chi(a)\not=0$,
which shows that $n\chi\not=0$, a contradiction.
\end{proof}

\begin{theorem}
\label{lemma:2}
If $G$ is a compact metric profinite abelian group such that $t(G)$ is dense in $G$, then
$G$ is topologically isomorphic to a product of countably many cyclic groups.
\end{theorem}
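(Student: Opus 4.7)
The plan is to apply Pontryagin duality to reduce the statement to a structural result about countable abelian $p$-groups. Since $G$ is compact metric, its dual $\widehat{G}$ is countable and discrete; since $G$ is profinite, $\widehat{G}$ is torsion. Decompose $\widehat{G}$ into its $p$-primary components:
\[
\widehat{G}=\bigoplus_{p\in\Prm} B_p,
\]
where each $B_p$ is a countable abelian $p$-group. Dualizing turns direct sums into direct products, so
\[
G\cong\widehat{\widehat{G}}\cong\prod_{p\in\Prm}\widehat{B_p}.
\]

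Next I would push the density of $t(G)$ down to each factor. For each prime $p$ the projection $\pi_p:G\to\widehat{B_p}$ is a continuous surjective homomorphism, hence it sends the dense set $t(G)$ onto a dense subset of $\widehat{B_p}$; since homomorphisms preserve torsion, the image lies in $t(\widehat{B_p})$. Thus $t(\widehat{B_p})$ is dense in $\widehat{B_p}$. This is exactly the hypothesis of Lemma~\ref{lemma:1} applied with $A=B_p$, which then yields $\bigcap_{n\in\N} p^n B_p=\{0\}$.

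At this point I would invoke Pr\"ufer's first theorem: a countable abelian $p$-group $A$ for which $\bigcap_n p^n A=\{0\}$ (equivalently, one that is Hausdorff in the $p$-adic topology) is isomorphic to a direct sum of finite cyclic $p$-groups. Applying this to each $B_p$ gives
\[
B_p\cong\bigoplus_{i\in I_p}\Z(p^{n(p,i)})
\]
for some at most countable index set $I_p$ and positive integers $n(p,i)$. Dualizing and recalling $\widehat{\Z(p^k)}\cong\Z(p^k)$ yields $\widehat{B_p}\cong\prod_{i\in I_p}\Z(p^{n(p,i)})$, and therefore
\[
G\cong\prod_{p\in\Prm}\prod_{i\in I_p}\Z(p^{n(p,i)}),
\]
which is a product over the countable index set $\bigsqcup_p I_p$ of finite cyclic groups, as required.

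The main obstacle I anticipate is the verification that the hypotheses transfer cleanly to each $p$-component; specifically, ensuring that the projection $\pi_p:G\to\widehat{B_p}$ is genuinely surjective (so that images of dense sets are dense) and that one really is entitled to invoke Pr\"ufer's theorem in the stated form. The first is handled by noting that $\pi_p$ is dual to the inclusion $B_p\hookrightarrow\widehat{G}$, which is injective, so by Pontryagin duality $\pi_p$ is surjective (in fact, a quotient map onto a compact group). The second is a classical result, but if one wants to keep the paper self-contained, the easiest route is to produce, inside $B_p$, a basic subgroup that exhausts $B_p$, using the vanishing of $\bigcap_n p^n B_p$ to prevent any divisible part from appearing.
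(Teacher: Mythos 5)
Your proposal is correct and follows essentially the same route as the paper: dualize, split $\widehat{G}$ into its $p$-primary components, transfer the density of the torsion part to each dual factor via the surjection dual to the inclusion of the summand, apply Lemma~\ref{lemma:1} to kill elements of infinite height, and invoke Pr\"ufer's theorem for countable $p$-groups. The only quibble is naming: the result you cite (countable $p$-group with $\bigcap_n p^nA=\{0\}$ is a direct sum of cyclic groups) is usually called Pr\"ufer's \emph{second} theorem --- the first concerns groups of bounded exponent --- and that is how the paper cites it.
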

\begin{proof}
Since $G$ is compact metric abelian group, $A=\du{G}$ is a countable discrete group.
Since $G$ is profinite, $A$ is a torsion group.
Therefore, $A=\bigoplus_{p\in\Prm} A_p$, where each $A_p$ is a $p$-group.

Fix $p\in\Prm$.
Since $A_p$ is a direct summand of $A$,
there exists a continuous surjective homomorphism $f_p:G=\du{A}\to\du{A_p}$.
Since $t(G)$ is dense in $G$ and $f_p$ is continuous, $f_p(t(G))$ must be dense in
$\du{A_p}$. Since $f_p(t(G))\subseteq t(\du{A_p})$, we conclude that
$t(\du{A_p})$ is also dense in $\du{A_p}$.
Since $A_p$ is a $p$-group, from Lemma \ref{lemma:1} it follows that
$\bigcap_{n\in\N} p^n A_p=\{0\}$; that is,
$A_p$ does not have non-zero elements of infinite height.
Since $A_p$ is countable, by the second Pr\"ufer theorem, $A_p$ is a direct sum of (countably many) cyclic groups; see \cite{Fu}.

Since $A_p$ is a direct sum of cyclic groups for every $p\in\Prm$,
so is $A=\bigoplus_{p\in\Prm} A_p$.
By taking the dual, we conclude that $G=\du{A}$ is the direct product of
countably many cyclic groups.
\end{proof}

\begin{corollary}
\label{cor:8.3}
For an abelian group $G$, the following conditions are equivalent:
\begin{itemize}
\item[(i)] $G$ is topologically isomorphic to a direct product of countably many finite cyclic groups;
\item[(ii)] $G$ is topologically isomorphic to a direct product of countably many finite groups;
\item[(iii)] $G$ is zero-dimensional, compact metric and $t(G)$ is dense in $G$.
\end{itemize}
\end{corollary}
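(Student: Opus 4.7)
The plan is to verify the three-way equivalence by the cycle (i)$\to$(ii)$\to$(iii)$\to$(i), with the real content of (iii)$\to$(i) being a direct appeal to Theorem \ref{lemma:2}.

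The implication (i)$\to$(ii) is immediate, since a finite cyclic group is a finite group. For (ii)$\to$(iii), suppose $G$ is topologically isomorphic to $\prod_{n\in\N} F_n$ with each $F_n$ a finite discrete group. Then $G$ is compact (Tychonoff), metric (a countable product of finite discrete, hence metrizable, spaces), and zero-dimensional (each factor is, and zero-dimensionality is preserved by arbitrary products in this compact Hausdorff setting). To see that $t(G)$ is dense in $G$, observe that every element of $\bigoplus_{n\in\N} F_n$ has only finitely many non-zero coordinates, each belonging to a finite group, and hence has finite order. Thus $\bigoplus_{n\in\N} F_n \subseteq t(G)$, and since $\bigoplus_{n\in\N} F_n$ is dense in $\prod_{n\in\N} F_n$, so is $t(G)$.

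For the main implication (iii)$\to$(i), I would first invoke the classical fact that a zero-dimensional compact Hausdorff topological group is profinite (this follows, for instance, from the existence of a neighbourhood base at the identity consisting of clopen normal subgroups, which exist in any zero-dimensional compact group). Combined with the hypotheses that $G$ is compact metric and $t(G)$ is dense in $G$, Theorem \ref{lemma:2} then applies and yields a topological isomorphism of $G$ onto a product of countably many cyclic groups. Because $G$ is profinite, each cyclic factor arising in the proof of Theorem \ref{lemma:2} is obtained by dualizing a cyclic summand of the torsion group $A=\widehat{G}$; these summands are finite cyclic, and their Pontryagin duals are again finite cyclic. Hence the product is actually a countable direct product of finite cyclic groups, giving (i).

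The only non-routine ingredient is Theorem \ref{lemma:2}, which is assumed; the remaining steps are standard topological verifications and an invocation of the general principle that zero-dimensional compact Hausdorff groups are profinite. The mildest technical point, and where I would be most careful, is confirming that the cyclic factors produced by Theorem \ref{lemma:2} are automatically finite: this follows because $A = \widehat{G}$ is torsion (as $G$ is profinite), so the cyclic summands in the Prüfer decomposition of each $p$-primary component $A_p$ are finite cyclic, and their duals are finite cyclic as well.
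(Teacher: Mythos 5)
Your proof is correct and follows essentially the same route as the paper: the cycle (i)$\to$(ii)$\to$(iii)$\to$(i), with (iii)$\to$(i) reduced to Theorem \ref{lemma:2} after noting that a zero-dimensional compact group is profinite (a step the paper leaves implicit). The only divergence is in showing the cyclic factors are finite: the paper deduces this directly from the statement of Theorem \ref{lemma:2}, since each factor is a continuous image of the compact group $G$ and an infinite cyclic group admits no compact Hausdorff group topology, whereas you reopen the proof of Theorem \ref{lemma:2} and argue via Pontryagin duality that the cyclic summands of the torsion group $\widehat{G}$ are finite --- both arguments are valid.
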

\begin{proof}
The implication (i)$\to$(ii)
is trivial.

(ii)$\to$(iii)
Suppose that $G\cong \prod_{n\in\N} F_n$, where $F_n$ is a finite group.
Then $G$ is clearly compact, zero-dimensional and metric.
Furthermore, $D=\bigoplus_{n\in \N} F_n$ is dense in $G$.
Since $t(G)$ contains $D$, it follows that
$t(G)$ is also dense in $G$.

(iii)$\to$(i)
By Theorem \ref{lemma:2},
$G\cong \prod_{n\in\N} C_n$, where each $C_n$ is a cyclic group.
Since each $C_n$ is compact, as a continuous image of the compact space $G$,
it must be finite.
\end{proof}

\begin{corollary}
\label{profinite:corollary}
For a compact metric abelian group $G$, the following conditions are equivalent:
\begin{itemize}
\item[(i)] $G$ is topologically isomorphic to a direct product of cyclic groups;
\item[(ii)] $G$ is profinite and $t(G)$ is dense in $G$.
\end{itemize}
\end{corollary}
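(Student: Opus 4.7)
The plan is to establish the two implications separately, with the heavy lifting already done in Theorem~\ref{lemma:2}. For the direction (ii)$\to$(i), I would simply appeal to Theorem~\ref{lemma:2}: its conclusion---that $G$ is topologically isomorphic to a product of countably many cyclic groups---is precisely condition~(i), so nothing further is needed.

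For the reverse implication (i)$\to$(ii), suppose $G$ is topologically isomorphic to $\prod_{\alpha\in A} C_\alpha$ with each $C_\alpha$ cyclic. Each projection gives a continuous surjective homomorphism from $G$ onto $C_\alpha$, so every $C_\alpha$ is a compact Hausdorff topological group. I would then use the standard observation that an algebraically cyclic compact Hausdorff topological group is forced to be finite: being countable and Hausdorff compact, it has an isolated point by the Baire category theorem, and homogeneity propagates this to every point, making the group discrete and hence finite. Consequently $G$ is topologically isomorphic to a product of finite groups, so it is profinite by definition. The density of $t(G)$ in $G$ is then immediate from the fact that $\bigoplus_{\alpha\in A} C_\alpha$ consists of elements of finite order (each has finite support in a product of finite groups) and is dense in $\prod_{\alpha\in A} C_\alpha$ under the Tychonoff product topology.

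I do not foresee a serious obstacle. The substantive analysis is packaged into Theorem~\ref{lemma:2} (through the $p$-primary decomposition of $\widehat{G}$, Lemma~\ref{lemma:1}, and the second Pr\"ufer theorem), and the reverse direction is a routine unpacking of the product structure. The one mild point worth stating carefully is the assertion that each cyclic factor in~(i) must in fact be finite; beyond this short Baire-category remark, everything else is straightforward bookkeeping.
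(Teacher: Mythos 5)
Your proof is correct and follows essentially the route the paper intends: the paper states this corollary without proof, the direction (ii)$\to$(i) being exactly Theorem~\ref{lemma:2} and the direction (i)$\to$(ii) being the same routine argument used in the proof of Corollary~\ref{cor:8.3} (compact Hausdorff cyclic groups are finite, a product of finite discrete groups is profinite, and the torsion direct sum is dense). Your Baire-category justification that each cyclic factor must be finite is a correct way to supply the one detail the paper leaves implicit.
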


\section{In conclusion}

\label{conclusion:section}

Numke studied in \cite{Nunke} closed subgroups of
the powers
$\Z^I$ of the integers
$\Z$ with the discrete topology.
For a countable set $I$, he showed that:
\begin{itemize}
\item
 every closed subgroup of
$\Z^I$ is itself a (direct) product,
\item
every endomorphism of $\Z^I$ is continuous, and
\item
every direct summand
of $\Z^I$ is closed (and therefore, is itself a product).
\end{itemize}

Our next corollary can be considered as an analogue of this theorem for countable products of finite cyclic groups.

\begin{corollary}
\label{product:corollary}
Let $I$ be a countable set, $\{G_i:i\in I\}$ be a family of finite abelian groups
and $G=\prod_{i\in I} G_i$ be its direct product.
Then every closed weakly controllable subgroup $H$ of $G$
is topologically isomorphic to a direct product
of finite cyclic groups.
\end{corollary}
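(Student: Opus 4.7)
The plan is to verify that the hypotheses of Corollary \ref{cor:8.3} apply to $H$, and then quote that corollary. Specifically, I would show that $H$ is zero-dimensional, compact metric, and has dense torsion part, so that $H$ is topologically isomorphic to a direct product of countably many finite cyclic groups.

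First I would check the topological properties. Since each $G_i$ is finite (hence discrete and metrizable) and $I$ is countable, the Tychonoff product $G=\prod_{i\in I} G_i$ is a compact, zero-dimensional, metrizable group. These properties pass to closed subspaces: since $H$ is closed in $G$, it is compact, zero-dimensional, and metrizable. So conditions on compactness, metrizability, and zero-dimensionality in Corollary \ref{cor:8.3}(iii) are immediate.

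Next I would establish density of the torsion part $t(H)$ in $H$. The key observation is that because every $G_i$ is finite, every element of $G_i$ has finite order, so every element of the direct sum $\bigoplus_{i\in I} G_i$ (which is nonzero only on finitely many coordinates) has finite order in $G$. Consequently
\[
H\cap \bigoplus_{i\in I} G_i \;\subseteq\; t(H).
\]
By the weak controllability hypothesis, $H\cap \bigoplus_{i\in I} G_i$ is dense in $H$, and therefore $t(H)$ is dense in $H$ as well.

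Having verified all three conditions of Corollary \ref{cor:8.3}(iii), I would conclude via the equivalence (iii)$\to$(i) of that corollary that $H$ is topologically isomorphic to a direct product of countably many finite cyclic groups, as required. I do not anticipate any serious obstacle; the corollary is essentially a direct application of Corollary \ref{cor:8.3}, with the only non-trivial ingredient being the elementary observation that finiteness of all $G_i$ forces the direct sum to sit inside the torsion subgroup, which converts the topological approximation provided by weak controllability into a torsion approximation.
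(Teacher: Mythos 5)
Your proof is correct and follows essentially the same route as the paper: the identical observation that finiteness of the $G_i$ forces $H\cap\bigoplus_{i\in I}G_i\subseteq t(H)$, so weak controllability gives density of $t(H)$, followed by an appeal to the structure result for compact metric zero-dimensional abelian groups with dense torsion. The only cosmetic difference is that you invoke Corollary \ref{cor:8.3} (verifying zero-dimensionality) where the paper invokes the equivalent Corollary \ref{profinite:corollary} (verifying profiniteness) and then adds a one-line compactness argument to see that the cyclic factors are finite, a step your chosen corollary already builds in.
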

\begin{proof}
Since all groups $G_i$ are finite,
$\bigoplus_{i\in I} G_i\subseteq t(G)$, and so
$H\cap \left(\bigoplus_{i\in I} G_i\right)\subseteq H\cap t(G)=t(H)$.
Since $H$ is weakly controllable in $G$,
$H\cap \left(\bigoplus_{i\in I} G_i\right)$ is dense in $H$.
It follows that $t(H)$ is also dense in $H$.
Clearly, $G$ is a compact profinite metric group, and so is its closed subgroup $H$.
Applying the implication (ii)$\to$(i) of
Corollary \ref{profinite:corollary}
to $H$, we conclude that $H$ is topologically isomorphic to a direct product $\prod_{j\in J} C_j$ of cyclic groups $C_j$.
Since each $C_j$ is a closed subgroup of the compact group $H$, it is also compact. Therefore, each $C_j$ must be finite.
\end{proof}

Since direct products of finite groups are compact, the closedness of $H$ in $G$
is a necessary condition for $H$ to be topologically isomorphic to a direct product of finite groups. The next example shows that Corollary \ref{product:corollary}
fails without the assumption that $H$ is weakly controllable in $G$.

\begin{example}
Let $p$ be a prime number. For every $i\in\N$ let $G_i=\Z(p^i)$ be the cyclic group  of order $p^i$. Then the direct product $G=\prod_{i\in\N} G_i$ contains a closed subgroup $H$ topologically isomorphic to the group $\Z_p$ of $p$-adic integers.
(Indeed, use the fact that $\Z_p$ is topologically isomorphic to the limit of the inverse sequence of $G_i=\Z(p^i)$; see \cite{DPS}.)
On the other hand, $\Z_p$ is known to be indecompasable; that is, $\Z_p$ cannot be represented as a product of two topological groups; see \cite{DPS}.
\end{example}

Recall that a compact group $G$ is a {\em Valdivia compact group\/} provided that
$G$ is homeomorphic to the subspace
$$
\{x\in [0,1]^I: \{i\in I: x(i)\not=0\}
\
\mbox{ is countable}
\}
$$
of
the
Tychonoff cube
$[0,1]^I$ for some index set $I$; see \cite{KubisUspenskij:2005}.
Several important results have been proved for these groups recently \cite{Kubis:2008,
Chigogidze:2008,KubisUspenskij:2005}.
Chigogidze proves in \cite{Chigogidze:2008}
that every Valdivia compact  group is homeomorphic to a product of metrizable compacta.
Kubis \cite{Kubis:2008}
considers the smallest class $\mathcal R$  of compact spaces containing all compact metric spaces
and closed under limits of continuous inverse sequences of retractions, and he shows that every compact connected
abelian group which is a topological retract of a space from class $\mathcal R$ is
homeomorphic
to a product of compact
metric spaces.

Our
Corollary \ref{product:corollary} could be compared with the above results.
It seems worth noting that in Corollary \ref{product:corollary} one has a topological isomorphism, while results mentioned above give only
a homeomorphism (discarding the algebraic structure).

\begin{remark}
Apparently, Staiger \cite{staiger} was the first to suggest that taking closed
subgroups of direct products is crucial for obtaining good properties of group codes they represent.
Most (although not all) of
the examples in our paper deal with non-closed subgroups of direct products.
This provides a strong evidence in support of the validity of Staiger's hypothesis.
\end{remark}

For convenience of specialists in coding theory,
we restate here
some of our results
in terms commonly used in coding theory.

\begin{corollary}
\label{th_summary}
Let $\mathcal C$ be a complete group code in $G=\prod\limits_{i\in \N} G_i$,
where
every group $G_i$ is
finite.
Then the following conditions are equivalent:
\begin{enumerate}
\item $\mathcal C$ is weakly controllable;

\item $\mathcal C$ is controllable;

\item $\mathcal C$ is uniformly controllable.
\end{enumerate}
Furthermore, if all $G_i$ are abelian, then $\mathcal C$ is
topologically isomorphic to a direct product of finite cyclic groups.
\end{corollary}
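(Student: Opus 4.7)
The plan is to observe that this corollary is essentially a repackaging of results already established earlier in the paper, translated into the language of coding theory where a ``complete group code'' means a closed subgroup of the direct product. So no new argument is really needed; the work is just in citing the right previous results.

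First, for the equivalence of conditions (1), (2), and (3), I would simply invoke Corollary \ref{uniform:controllability}, which asserts exactly this three-way equivalence for arbitrary subgroups (not necessarily closed) of a product of finite groups $\prod_{i\in I}G_i$. Since $\mathcal{C}\subseteq G=\prod_{i\in\N}G_i$ is such a subgroup and each $G_i$ is finite, the hypotheses of that corollary are satisfied. Note that by Propositions \ref{two:versions:of:controllability} and \ref{unif:contr:coincide}, the notions of controllability and uniform controllability used in coding theory (Definitions \ref{classical:definition}(ii) and \ref{uniform]controllability:in:Z}) agree with ours in the case $I=\N$, and the weak controllability notions coincide by inspection of Definitions \ref{def:weak:controllability} and \ref{classical:definition}(i). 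So the three-way equivalence transfers directly.

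For the final clause, I would use that $\mathcal{C}$ is closed (being ``complete'') and weakly controllable, with all $G_i$ now assumed abelian and finite, and cite Corollary \ref{product:corollary}. That corollary says precisely that under these hypotheses $\mathcal{C}$ is topologically isomorphic to a direct product of finite cyclic groups.

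There is no real obstacle here; the only subtlety is the terminological one of matching ``complete group code'' with ``closed subgroup'' and reconciling the coding-theory definitions of (uniform) controllability with ours, but both of these correspondences are already established in Section \ref{coding:section}. Thus the proof reduces to two one-line invocations of Corollaries \ref{uniform:controllability} and \ref{product:corollary}.
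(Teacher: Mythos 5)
Your proposal is correct and matches the paper's own proof, which likewise just unpacks ``complete group code'' as ``closed subgroup'' and then cites Corollary \ref{uniform:controllability} for the equivalence of (1)--(3) and Corollary \ref{product:corollary} for the final clause. Your additional remarks reconciling the coding-theoretic definitions via Propositions \ref{two:versions:of:controllability} and \ref{unif:contr:coincide} are a harmless (and slightly more careful) elaboration of the same argument.
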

\begin{proof}
A ``complete group code in $G$'' means ``closed subgroup of $G$'', so
$\mathcal C$ is a closed subgroup of $G$.
Therefore, the equivalence of (1), (2) and (3) follows from
Corollary \ref{uniform:controllability}.
The final statement is proved in Corollary \ref{product:corollary}.
\end{proof}

\begin{remark}
When the code
$\mathcal C$ in
Corollary \ref{th_summary}
is in addition time invariant, then it follows  from condition (3)
that all properties in this corollary are equivalent to the strong controllability
of $\mathcal C$; see \cite{fagnani:adv97}.
\end{remark}

\end{document}